\documentclass[journal]{IEEEtran}
\usepackage{cite}
\usepackage{color}
\usepackage{graphicx}
\graphicspath{{fig/}{jpeg/}}
\usepackage[cmex10]{amsmath}
\usepackage{amssymb,bbm,}
\usepackage{algorithm,algorithmic,multirow,hhline}
\usepackage{booktabs}
\usepackage{amsmath,amssymb,lipsum}
\usepackage{hyperref}
\usepackage{mathrsfs}
\usepackage{comment}
\usepackage{graphicx}
\usepackage{float}
\usepackage[font=small]{caption}
\usepackage{subcaption}
\usepackage[dvipsnames]{xcolor}

\usepackage{mathtools,enumitem}
\usepackage[outdir=./]{epstopdf}
\input{mysymbol.sty}
\usepackage{needspace}





\usepackage{theorem}

\usepackage{pifont}
\newcommand{\cmark}{\text{\ding{51}}}%
\newcommand{\xmark}{\text{\ding{55}}}

\newtheorem{thm}{Theorem}
\newtheorem{lemma}{Lemma}

\newtheorem{corollary}{Corollary}

\theoremstyle{definition}
\newtheorem{assumption}{Assumption}
\newtheorem{remark}{Remark}

\date{}

\newcommand{\closure}[2][3]{{}\mkern#1mu\overline{\mkern-#1mu#2}}

 \newcommand{\INDSTATE}[1][1]{\STATE\hspace{3mm}}
\newcommand{\INDSTATED}[1][1]{\STATE\hspace{6mm}}



\title{Adaptive Kernel Learning\\ in Heterogeneous Networks}
%
\author{Hrusikesha Pradhan, Amrit~Singh~Bedi,
	Alec~Koppel,
	and~Ketan~Rajawat

	\thanks{
		H. Pradhan and K. Rajawat are with the Dept. of EE,
		Indian Institute of Technology, Kanpur 208016, India (e-mail: $\{$hpradhan,ketan@iitk.ac.in$\}$). A. S. Bedi and A. Koppel are with U.S. Army Research Laboratory, Adelphi, MD, USA. (e-mail: amritbd@iitk.ac.in; alec.e.koppel.civ@mail.mil). Part of this work appeared at Global Conference on Signal and Information Processing, Anaheim, California, USA, November $26-29$, $2018$ \cite{8646689}.  }\vspace{-0cm}}


\begin{document}
\maketitle
\begin{abstract}
We consider learning in decentralized heterogeneous networks: agents seek to minimize a convex functional that aggregates data across the network, while only having access to their local data streams.  We focus on the case where agents seek to estimate a regression \emph{function} that belongs to a reproducing kernel Hilbert space (RKHS). To incentivize coordination while respecting network heterogeneity, we impose nonlinear proximity constraints. The resulting constrained stochastic optimization problem is solved using the functional variant of stochastic primal-dual (Arrow-Hurwicz) method which yields a decentralized algorithm. In order to avoid the model complexity from growing linearly over time, we project the primal iterates onto subspaces greedily constructed from kernel evaluations of agents' local observations. The resulting scheme, dubbed Heterogeneous Adaptive Learning with Kernels (HALK), allows us, for the first time, to characterize the precise trade-off between the optimality gap, constraint violation, and the model complexity. In particular, the proposed algorithm can be tuned to achieve zero constraint violation, an optimality gap of $\ccalO(T^{-1/2}+\alpha)$ after $T$ iterations, where the number of elements retained in the dictionary is determined by $1/\alpha$.  Simulations on a correlated spatio-temporal field estimation validate our theoretical results, which are corroborated in practice for networked oceanic sensing buoys estimating temperature and salinity from depth measurements.

%

\end{abstract}


\section{Introduction}\label{sec:intro}
In decentralized optimization, each agent $i\in \ccalV$ in a network $\mathcal{G}=(\ccalV,\mathcal{E})$ has a local objective but seek to cooperate with other agents to minimize the global network objective. The agents communicate only with their neighbors for solving the global objective. This global objective is {the} sum of local convex objectives available at different nodes of the network and depends upon the locally observed information. This framework has yielded, for instance, networked controllers \cite{olfati2004consensus}, signal processing \cite{sayed2007distributed},  robotics \cite{schwager2017multi}, and communications \cite{kozick2004source}.

In this work, we focus on the case where the agents that comprise the interconnected network may be of different types, such as aerial and ground robots collaboratively gathering information \cite{schwager2017multi}, or wireless channel estimation when spatial covariates are present \cite{kozick2004source}. In such settings, local information may be distinct, but performance may still be boosted by information sharing among agents. This phenomenon may be mathematically encapsulated as convex non-linear proximity constraints. We focus on the case where each agent's objective depends on a data stream, i.e., the online case, and the observations provided to the network are heterogeneous, when agents decisions are defined not by a standard parameter vector but instead a nonlinear regression function that belongs to a reproducing kernel Hilbert space (RKHS) \cite{Kivinen2004}.

\begin{table*}[t]
\centering
\renewcommand\tabcolsep{10pt}
\begin{tabular}{|cccccc|}
\hline
Reference & {Average Sub-optimality} & {Average Constraint violation}    & Multi-agent & Function Class  & Model Complexity   \\[5pt]
\hline
\cite{yu2017online,madavan2019subgradient}    &  $\ccalO(T^{-1/2})$ 	& $\ccalO(T^{-1/2})$         & $\xmark$      & $\reals^d$ &    N.A.\\[5pt]
\cite{mahdavi2012trading,koppel2017proximity} &  $\ccalO(T^{-1/2})$ & $\ccalO(T^{-1/4})$    & $\cmark$ & $\reals^d$ &  N.A.\\[5pt]
%
%
%
\cite{mahdavi2012trading}    &  $\ccalO(T^{-1/4})$ 	& zero         & $\xmark$       & $\reals^d$ & N.A.\\[5pt]
\cite{yu2020low}    &  $\ccalO(T^{-1/2})$ 	& $\ccalO(T^{-1})$         & $\xmark$       & $\reals^d$ & N.A.\\[5pt]
\cite{jenatton2016adaptive,yuan2018online}    &  $\ccalO(T^{-1/2})$ 	& $\ccalO(T^{-1/4})$         & $\xmark$       & $\reals^d$ & N.A.\\[5pt]
\cite{bouboulis2017online,KoppelEtal18a,richards2020decentralised,xu2020coke}    &  $\ccalO(T^{-1/2})$ & $\ccalO(T^{-1/4})$    &  $\xmark$ & $\ccalH$    & finite\\[5pt]
{\textbf{This Work}}  & {$\ccalO(T^{-1/2}+\alpha)$} & {zero} & {$\cmark$} & {$\ccalH$} & {$\ccalO(\alpha^{-2p})$}\\[5pt]
\hline
\end{tabular}
\caption{{Comparison of our work with the related works in the literature.} $\reals^d$ denotes $d$-dimensional Euclidean space, $\ccalH$ denotes an RKHS, and $p$ denotes the data dimension. {Our work achieves null constraint violation on average with comparable average sub-optimality rate.}}\label{Tab1}\vspace{-5mm}
\end{table*}
Setting aside the constraints, the solution of stochastic programs, assuming no closed form exists, necessitates iterative tools. The {simplest} approach, gradient descent, requires evaluating an expectation which depends on infinitely many data realizations. {This issue may be overcome through stochastic gradient descent (SGD) \cite{Robbins1951}, which alleviates the dependence on the sample size by using stochastic gradients in lieu of true gradients, and hence is popular in large-scale learning \cite{shapiro2009lectures}}. However, its limiting properties are intrinsically tied to the statistical parameterization (decision variable) one chooses. For vector-valued problems, i.e., linear statistical models, the convergence of SGD is well-understood via convexity \cite{Boyd2004}. 

By contrast, optimization problems induced by those with richer descriptive capability (owing to universal approximation \cite{tikhomirov1991representation}), are more challenging. 
Dictionary learning  \cite{Elad2006} and deep networks \cite{haykin1994neural} trade convexity for descriptive richness, which has led to a flurry of interest in non-convex stochastic optimization \cite{jain2017non}. Generally, overcoming non-convexity requires adding noise that degrades parameter estimates \cite{pemantle1990nonconvergence}, which may then prove inoperable for online systems. 
Instead, one may preserve convexity while obtaining nonlinearity (universality) through the ``kernel trick"  \cite{slavakis2013online}. Owing to the Representer Theorem \cite{scholkopfgeneralized}, one may transform the function variable to an inner product of weights and kernel evaluations at samples. Unfortunately, the representational complexity is proportional with the sample size $N$ \cite{scholkopfgeneralized}, which for online settings $N\rightarrow\infty$. To address this issue, we employ hard-thresholding projections onto subspaces constructed greedily from the history of data observation via matching pursuit \cite{258082}, which nearly preserves global convergence \cite{POLK}. 

Now, we shift focus to multi-agent optimization. Typically, the global cost is additive across agents' individual costs. Thus decentralized schemes may be derived by constraining agents' decisions to be equal. One may solve such problems via primal-only schemes via penalty method \cite{Ram2010},  reformulating the consensus constraint in the dual domain \cite{hosseini2013online}, and primal-dual approaches \cite{lee2016distributed} which alternate primal/dual descent/ascent steps on the Lagrangian. Approximate dual methods \cite{Arrow1958}, i.e., ADMM, have also been used \cite{shi2014linear}. Beyond linear equality constraints, motivated by heterogeneous networks, only primal methods and exact primal-dual approaches are viable, since dual methods/ADMM require solving a nonlinear $\argmin$ in the inner-loop which is prohibitively costly. 
Hence, we adopt a primal-dual approach to solving the proximity-constrained problem  \cite{koppel2017proximity} over the more general RKHS setting \cite{koppel2017decentralized}, which is developed in detail in Sec. \ref{sec:prob}. To do so, we generalize RKHS primal-dual method \cite{KoppelEtal18a} to multi-agent optimization (Sec \ref{sec:algorithm}), and obtain a new collaborative learning systems methodology which we call Heterogeneous Adaptive Learning with Kernels (HALK)(Sec \ref{sec:algorithm}). {Compared to \cite{KoppelEtal18a}, several technical contributions are unique to this work:
\begin{itemize}[leftmargin=*]
\item We establish an explicit dependence of the number of dictionary elements (model complexity) that must be retained for the proposed non-parametric approach to yield solutions that are arbitrarily accurate. Thm.  \ref{thm:bound_memory_order} characterizes that for a given choice of the parameter $\alpha$, the model order is upper bounded by  $\ccalO(\alpha^{-2p})$. Such a non-asymptotic characterization of model complexity has never been reported in the literature for any of the non-parametric algorithms. 
\item Relative to existing primal-dual algorithms for constrained  stochastic optimization in RKHS \cite{KoppelEtal18a,koppel2017decentralized}, we have introduced a regularization of the dual update in terms of problem constants that permits one to obtain {average} sub-optimality rate of $\ccalO(T^{-1/2}+\alpha)$ which is similar to the tightest sub-optimality rates $\ccalO(\sqrt{T})$ available in the literature, while ensuring  feasibility {on average}, i.e., null constraint violation, in contrast to $\ccalO(T^{3/4})$ rate for existing settings (Thm. \ref{thm:convergence}). The dependence of the optimality gap on the model complexity ($\alpha$) along with the number of iterations ($T$) is established for the first time in the RKHS literature.

%
%
%

%
%
\item In Sec. \ref{sec:simul}, we demonstrate the algorithm's effectiveness for spatio-temporal correlated Gaussian random field estimation (Sec. \ref{field_est}). Moreover, we experimentally test it on a real ocean data set for monitoring ocean salinity and temperature on buoys at various depths (Sec. \ref{sec:ocean}). An extended experiment  adapts bandwidths online \cite{singh2011information}: each agent's kernel adapts to its local data, and thus outperforms settings where local hyper-parameters are fixed.
\end{itemize}
%
 %
 {\bf Discussion of Rates} Regarding the context of  Thm. \ref{thm:convergence}, we note that existing efforts to obtain strict feasibility only obtain optimality gap attenuation at $\ccalO(T^{3/4})$ \cite{mahdavi2012trading}, or obtain $\ccalO(\sqrt{T})$ sub-optimality with comparable constraint violation \cite{yu2017online,madavan2019subgradient}, with the exception of a complicated barrier methods that do not generalize to the learning settings \cite{yu2020low}. {In this work, we achieve strict feasibility on average without degrading the sub-optimality gap.} {See Table \ref{Tab1} for details.} Moreover,  \cite{jenatton2016adaptive,yuan2018online} 
  obtain a tunable tradeoff $\beta\in(0,1)$ between sub-optimality and constraint violation, which we do not consider for simplicity, and specifically considering $\beta=0.5$, we present their best result in Table \ref{Tab1}. }

\section{Problem Formulation}\label{sec:prob}
%
In supervised learning, data takes the form of input-output examples, {$(\bbx, y)$,} which are independent identically distributed (i.i.d.) realizations from a distribution $\mathbb{P}(\bbx,y)$ with support $\ccalX \times \ccalY$. Here, $\ccalX \subset \reals^p$ and $\ccalY \subset \reals$. In this work, we focus on expected risk minimization where one seeks to compute the minimizer of a loss quantifying the merit of a nonlinear statistical model $f\in \ccalH$ averaged over the data $\{(\bbx, y)\}$. Setting aside the choice of $\ccalH$ for now, the merit of estimator $f$ is quantified by the convex loss function $\ell:\ccalH \times \ccalX \times \ccalY \rightarrow \reals$ which is small when estimator $f(\bbx)$ evaluated at feature vector $\bbx$ is close to target variable $y$. Averaged over the unknown data distribution, we obtain the statistical loss  $\mbE_{\bbx, \bby}[ \ell(f(\bbx), y)]$. It is commonplace to add a Tikhonov regularizer, yielding the regularized loss $\mbE_{\bbx, \bby}[ \ell(f(\bbx), y)] + (\lambda/2)\|f\|^2_{\ccalH}$. Within the non-parametric setting, the regularizer permits application of the Representer Theorem \cite{scholkopfgeneralized}, as discussed later in this paper.  The optimal (centralized) function is defined as
\begin{align}\label{eq:kernel_stoch_opt}
\min_{f \in \ccalH}\mbE_{\bbx, {y}}\left[ \ell\left(f(\bbx), y\right)\right] +\frac{\lambda}{2}\norm{f}^2_{\ccalH}.
\end{align}
In this work, we focus on extensions of the formulation in \eqref{eq:kernel_stoch_opt} to the case where data is scattered across an interconnected network that represents, for instance, robotic teams, communication systems, or sensor networks. To do so, we define a symmetric, connected, and directed network $\ccalG = (\ccalV, \ccalE)$ with $|\ccalV|=V$ nodes and $|\ccalE|=E$ edges. The neighborhood of agent $i$ is denoted by $n_i:=\{j:(i,j)\in\ccalE\}$.  Each agent $i\in \ccalV$ observes a local data sequence as realizations $(\bbx_{i,t}, y_{i,t})$ from random pair $(\bbx_i, y_i) \in \ccalX \times \ccalY$ and seeks to learn a optimal regression function $f_i$. {Local samples are assumed to be i.i.d. from a local distribution $(\bbx_{i,t},y_{i,t})\sim\mathbb{P}_i(\bbx_i,y_i)$ associated with node $i$.} This setting may be  encoded by associating to each node $i$ a convex loss functional $\ell_i:\ccalH \times \ccalX \times \ccalY \rightarrow \reals$ that quantifies the merit of the estimator $f_i(\bbx_i)$ evaluated at feature vector $\bbx_i$, and defining the goal for each node as the minimization of the common global loss
\vspace{-1mm}
\begin{align}\label{eq:kernel_stoch_opt_global}
\min_{ \{f_i \in \ccalH\}} S(\bbf) :=\sum_{i\in\ccalV}\Big(\mbE_{\bbx_i,y_i}\Big[ \ell_i(f_i\big(\bbx_i), y_i\big)\Big] 
+\frac{\lambda}{2}\|f_i \|^2_{\ccalH} \Big).
\end{align}
Subsequently, define the space $\ccalH^V$ whose elements are stacked functions $\bbf(\cdot) = [f_1(\cdot) ; \cdots ; f_V(\cdot)]$ that yield vectors of length $V$ when evaluated at local random vectors as $\bbf(\bbx) = [f_1(\bbx_1) ; \cdots ; f_V(\bbx_V)] \in \reals^V$. Moreover, define the stacked random vectors $\bbx = [\bbx_1 ; \cdots ; \bbx_V] \in \ccalX^V \subset \reals^{Vp}$ and $\bby = [ y_1 ; \cdots y_V] \in \reals^V$ that may represent $V$ labels or physical measurements. Note that under consensus constraints of the form $f_i = f_j$ for all $(i,j)\in \ccalE$ and for a connected network, the solutions to \eqref{eq:kernel_stoch_opt} and \eqref{eq:kernel_stoch_opt_global} coincide. 

However, as has been recently shown for the linear models, compelling all nodes to make \emph{common} decisions may ignore local differences in their data streams, and in particular, yields a sub-optimal solution with respect to their distinct data \cite{koppel2017proximity}. In other words, it is indeed possible to boost the statistical accuracy of local estimates by relaxing the consensus constraint, while still maintaining information exchange between neighbors. In the present work, we achieve this enhancement by incentivizing the agents to coordinate without forcing their estimators to completely coincide. 

To this end, we consider a convex ({symmetric}) proximity function $h_{ij}:\reals \times \reals \rightarrow \reals_{+}$ for $(i,j)\in \ccalE$. A node $i\in\ccalV$ ensures proximity to its neighbors by requiring that $f_j$ for $j\in n_i$ be such that $
H_{ij}(f_i,f_j):=\mbE_{\bbx_i} [h_{ij}(f_i(\bbx_i),f_j(\bbx_i))]\le \gamma_{ij}$
\footnote{{Due to symmetry, we assume that the proximity function is of the form $h_{ij} =\frac{1}{2} \tilde{h}_{ij}$ for a convex $\tilde{h}_{ij}$ similarly defined. Subsequently, we suppress the tilde notation for notational simplicity, which also subsumes the scaling factor of $2$ due to constraints being doubly defined.}}
  for a given tolerance $\gamma_{ij} \geq 0$. The overall stochastic network optimization problem can be written as the 
\begin{align}\label{eq:main_prob}
\!\!\!\!\!\bbf^\star=\argmin_{ \bbf \in \ccalH^{V}} S(\bbf) \ \text{s.t. } \ H_{ij}(f_i,f_j)\le \gamma_{ij}, {\forall}\  j\in n_i,
\end{align}
Observe that if $h_{ij}(f_i(\bbx_i),f_j(\bbx_i)) =| f_i(\bbx_i) - f_j(\bbx_i)|$ and $\gamma_{ij}=0$, the problem \eqref{eq:main_prob} specializes to online consensus optimization in RKHS, which was solved in an \emph{approximate} manner using penalty methods in \cite{koppel2017decentralized}. Here, we seek to obtain the \emph{exact} optimal solutions to \eqref{eq:main_prob}. In particular, we seek a solution to \eqref{eq:main_prob} whose optimality gap and constraint violation decay with the number of iterations $T$. In the subsequent section, we shift focus to doing so based upon Lagrange duality \cite{Boyd2004}. We specifically focus on distributed online settings where nodes do not know the distribution of the random pair $(\bbx_i, y_i)$ but observe local independent samples $(\bbx_{i,n}, y_{i,n})$ sequentially. 
%
\vspace{-2mm}
\subsection{Function Estimation in RKHS}\label{subsec:kernels}
The optimization problem in \eqref{eq:kernel_stoch_opt}, and hence \eqref{eq:main_prob}, is intractable in general, since it defines a variational inference problem integrated over the unknown joint distribution $\mathbb{P}(\bbx,y)$.
 However, when $\ccalH$ is equipped with a \emph{reproducing kernel} $\kappa : \ccalX \times \ccalX \rightarrow \reals$ (see \cite{slavakis2013online}), a function estimation problem of the form \eqref{eq:kernel_stoch_opt} reduces to a parametric form via the Representer Theorem \cite{kimeldorf1971some}.
Thus, we restrict $\mathcal{H}$ to be a RKHS, i.e., {for $\tilde{f}:\ccalX\rightarrow \reals$ in $\ccalH$, it holds that }\vspace{-1mm}
\begin{align} \label{eq:rkhs_properties}
& (i)  \  \langle \tilde{f} , \kappa(\bbx_i, \cdot)) \rangle _{\ccalH} = \tilde{f}(\bbx_i) ,\quad (ii)  \ \ccalH = \closure{\text{span}\{ \kappa(\bbx_i , \cdot) \}}  
\end{align}
for all $ \bbx_i \in \ccalX$. Here $\langle \cdot , \cdot \rangle_{\ccalH}$ denotes the Hilbert inner product for $\ccalH$. Further assume that the kernel is positive semidefinite, i.e. $\kappa(\bbx_i, \bbx_i') \geq 0$ for all $\bbx_i, \bbx_i' \in \ccalX$. 

%

For kernelized empirical risk minimization (ERM), {under suitable regularization,} Representer Theorem \cite{kimeldorf1971some} establishes that the optimal $\tilde{f}$ in  hypothesized function class $\ccalH$ admits an expansion in terms of kernel evaluations \emph{only} over samples
\begin{equation}\label{eq:kernel_expansion}
\tilde{f}(\bbx_i) = \sum_{k=1}^N w_{i,k} \kappa(\bbx_{i,k}, \bbx_i)\; ,
\end{equation}
where $\bbw_i = [w_{i,1}, \cdots, w_{i,N}]^T \in \reals^N$ denotes a set of weights. Here $N$ in \eqref{eq:kernel_expansion} is referred to as the model order. For ERM the model order and sample size are equal. 

{\begin{remark}({\bf Empirical Function Representation})\normalfont
Suppose, for the moment, that we have access to $N$ i.i.d. realizations of the random pairs $(\bbx_i, y_i)$ for each agent $i$ such that the expectation in \eqref{eq:main_prob} is computable, and we further ignore the proximity constraint. Defining $S^k(\bbf):=\sum_{i\in \ccalV} \ell(f_i(\bbx_{i,k}), y_{i,k}) +\frac{\lambda}{2}\|f_i \|^2_{\ccalH}$, the objective in \eqref{eq:main_prob} becomes:
\begin{align}\label{eq:kernel_batch_opt}
\min_{\bbf \in \ccalH^V} \frac{1}{N}\sum_{k=1}^N S^k(\bbf)\;. 
\end{align}
From the Representer Thm. [cf. \eqref{eq:kernel_expansion}], \eqref{eq:kernel_batch_opt} can be rewritten as
\begin{align}\label{eq:kernel_batch_opt2}
\!\!\!\!\min_{\{\bbw_i \} \in \reals^N}\frac{1}{N}\sum_{k=1}^N  \sum_{i\in\ccalV}   \ell_i(\bbw_i^T \boldsymbol{\kappa}_{\bbX_i}(\bbx_{i,k}), y_{i,k}) +\frac{\lambda}{2} \bbw_i^T \bbK_i \bbw_i  ,
\end{align}
where we have defined the kernel matrix $\bbK_i\in \reals^{N\times N}$, with entries given by the kernel evaluations between $\bbx_{i,m}$ and $\bbx_{i,n}$ as $[\bbK_i]_{m,n} =\kappa(\bbx_{i,m}, \bbx_{i,n})$.  We further define the vector of kernel evaluations $\boldsymbol{\kappa}_{\bbX_i}(\cdot) = [\kappa(\bbx_{i,1},\cdot) \ldots \kappa(\bbx_{i,N},\cdot)]^T$ related to the kernel matrix as $\bbK_i = [\boldsymbol{\kappa}_{\bbX_i}(\bbx_{i,1}) \ldots \boldsymbol{\kappa}_{\bbX_i}(\bbx_{i,N})]$, whose dictionary of associated training points is defined as  $\bbX_i = [\bbx_{i,1},\ \ldots\ ,\bbx_{i,N}]$.
The Representer Theorem allows us to transform a nonparametric infinite dimensional optimization problem in $\ccalH^V$ \eqref{eq:kernel_batch_opt} into a finite $NV$-dimensional parametric problem \eqref{eq:kernel_batch_opt2}. Thus, for ERM, the RKHS permits solving nonparametric regression problems as a search over $\reals^{VN}$ for a set of coefficients. 

However, to solve problems of the form \eqref{eq:kernel_batch_opt} when training examples $(\bbx_{i,k}, y_{i,k})$ become sequentially available or their total number $N$ is not finite, the objective in \eqref{eq:kernel_batch_opt} becomes an expectation over random pairs $(\bbx_{i}, y_{i})$ as \cite{slavakis2013online}
\begin{align}\label{eq:kernel_stoch_opt2}
{\tilde{\bbf}^\star} &=  \argmin_{\{\bbw_i\in \reals^{\ccalI}\}_{i\in\ccalV}}\sum_{i\in\ccalV}\mbE_{\bbx_i, y_i}{[ \ell_i(\sum_{n\in\ccalI} w_{i,n} \kappa(\bbx_{i,n}, \bbx_i) , y_i)}]  \nonumber \\
&\qquad \qquad \qquad+\frac{\lambda}{2} \sum_{n,m\in \ccalI } w_{i,n} w_{i,m} \kappa(\bbx_{i,m}, \bbx_{i,n})  ]  \; . 
\end{align}
The Representer Theorem{, generalized to expected value minimization, i.e., in \cite{norkin2009stochastic}, implies that the basis expansion is over a countably infinite index set $\ccalI$}. That is, as the data sample size $N\rightarrow \infty$, the representation of $f_i$ becomes infinite as well. Our goal is to solve \eqref{eq:kernel_stoch_opt2} in an approximate manner such that each {$\tilde{f}_i$} admits a finite representation near {$\tilde{f}_i^\star$}, while satisfying the proximity constraints  as mentioned in \eqref{eq:main_prob}, omitted for the sake of discussion between \eqref{eq:kernel_batch_opt} - \eqref{eq:kernel_stoch_opt2}. 
\end{remark}
}

One wrinkle in the story is that the Representer Theorem in its vanilla form \cite{kimeldorf1971some} does not apply to constrained problems \eqref{eq:main_prob}. However, recently, it has been generalized to the Lagrangian of constrained problems in RKHS \cite{KoppelEtal18a}[Thm. 1]. To this end, some preliminaries are required. Let us collect the thresholds $\gamma_{ij}$ into a vector $\gam \in \reals^{\sum_i|n_i|}$ where $|n_i|$ is the number of neighbors of node $i$. Likewise, let us compact the the functions $\{H_{ij}\}$
into a vector-valued function $\bbH:\ccalH^V \rightarrow \reals^{\sum_i |n_i|}$. Associate a non-negative Lagrange multiplier $\mu_{ij}$ for each constraint in \eqref{eq:main_prob}, and collect these Lagrange multipliers into a vector $\bbmu \in \reals^{\sum_i |n_i|}$. The Lagrangian of \eqref{eq:main_prob} is therefore given by $
\tilde\ccalL(\bbf,\bbmu) = S(\bbf) + \ip{\bbmu, \bbH(\bbf)-\gam}$. Throughout, we assume Slater's condition, implying strong duality \cite{nedic2009subgradient}. Hence the optimal of \eqref{eq:main_prob} is identical to that of the primal-dual optimal pair $(\bbf^\star,\bbmu^\star)$ of the saddle-point problem
\begin{align}\label{eq:primaldualprob}
(\bbf^\star,\bbmu^\star)=\text{arg} \hspace{0.1em} \max_{\bbmu}\hspace{0.1em}\min_{\bbf}\hspace{0.3em}\tilde{\ccalL}(\bbf,\mathbf{\bbmu}).
\end{align}


{\bf \noindent Function Representation} Now, we  establish the Representer Theorem for applying to a variant of the Lagrangian \eqref{eq:primaldualprob}. Consider the empirical approximation of $\tilde\ccalL(\bbf,\bbmu)$ with training set of node $i$ defined as $\ccalS_i=\{(\bbx_{i,1},\bby_{i,1}),\dots,(\bbx_{i,N},\bby_{i,N})\}$ over $N$ samples. The empirical version of \eqref{eq:primaldualprob} over samples $\ccalS:=\{\ccalS_1,\dots,\ccalS_{V}\}$ is written as: 
\begin{align}\label{eq:primaldualprob_emp}
(\bbf^\star,\bbmu^\star)=\text{arg} \hspace{0.2em} \max_{\bbmu}\hspace{0.2em}\min_{\bbf}\hspace{0.3em}\ccalL^e(\bbf,\mathbf{\bbmu}),
\end{align}
%
%
where $\ccalL^e(\bbf,\bbmu)$ the empirical form of the Lagrangian:\vspace{-1mm}
\begin{align}\label{eq:empirical_lagrangian}
&\ccalL^e(\bbf,\bbmu)\coloneqq \frac{1}{N}\sum_{k=1}^N \left[S^k(\bbf)+\ip{\bbmu,\bbH^k(\bbf)-\gam}\right]
\end{align}
where $\bbH^k(\bbf)$ collects the functions $H_{ij}^k(f_i,f_j) := h_{ij}(f_i(\bbx_{i,k}),f_j(\bbx_{i,k}))$  over all $i\in\ccalV$ and $j \in n_i$. Now, with this empirical formulation, we generalize the Representer Theorem for constrained settings to the multi-agent problem \eqref{eq:empirical_lagrangian} as a corollary of \cite{KoppelEtal18a}[Thm. 1] with the proof provided in Appendix \ref{proof_representthm} of the supplementary material.
\begin{corollary}\label{thm:representer}
Let $\ccalH$ be a RKHS equipped with kernel $\kappa$ and $\ccalS$ be the training data of the network. Each function $i$ that is a primal minimizer of  \eqref{eq:empirical_lagrangian} takes the form
%
$f_i^\star=\sum_{k=1}^N w_{i,k} \kappa(\bbx_{i,k},.)$
%
where $w_{i,k}\in\mbR$ are coefficients.
\end{corollary}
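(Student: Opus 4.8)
The plan is to obtain Corollary~\ref{thm:representer} as a coordinatewise application of the constrained Representer Theorem \cite{KoppelEtal18a}[Thm.~1], the engine of which is an orthogonal (Pythagorean) decomposition in $\ccalH$. Fix a node $i\in\ccalV$ and a dual vector $\bbmu\geq\bb0$, and freeze every other component $f_\ell$, $\ell\neq i$, at its minimizing value. Viewed as a functional of $f_i$ alone, $\ccalL^e(\bbf,\bbmu)$ from \eqref{eq:empirical_lagrangian} takes the form $g_i(f_i)=(\lambda/2)\|f_i\|^2_{\ccalH}+\Phi_i\big(f_i(\bbx_{i,1}),\dots,f_i(\bbx_{i,N})\big)+c$, where $c$ is independent of $f_i$ and $\Phi_i:\reals^N\to\reals$ is convex, since it aggregates the losses $\ell_i(\cdot,y_{i,k})$ and the proximity penalties $\mu_{ij}h_{ij}(\cdot,f_j(\bbx_{i,k}))$, each a convex function of the corresponding evaluation of $f_i$. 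Thus the first step is to read off from \eqref{eq:empirical_lagrangian} that $f_i$ is evaluated only at its own samples $\{\bbx_{i,k}\}_{k=1}^{N}$, and to set $\ccalH_i:=\overline{\text{span}}\{\kappa(\bbx_{i,k},\cdot):k=1,\dots,N\}$, a finite-dimensional (hence closed) subspace of $\ccalH$.

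The second step is the decomposition. Write $f_i=f_i^{\parallel}+f_i^{\perp}$ with $f_i^{\parallel}\in\ccalH_i$ and $f_i^{\perp}$ orthogonal to $\ccalH_i$. The reproducing property in \eqref{eq:rkhs_properties} gives, for every $k$, $f_i(\bbx_{i,k})=\langle f_i,\kappa(\bbx_{i,k},\cdot)\rangle_{\ccalH}=\langle f_i^{\parallel},\kappa(\bbx_{i,k},\cdot)\rangle_{\ccalH}=f_i^{\parallel}(\bbx_{i,k})$, so $\Phi_i$ is invariant under the replacement $f_i\mapsto f_i^{\parallel}$; the Pythagorean identity gives $\|f_i\|^2_{\ccalH}=\|f_i^{\parallel}\|^2_{\ccalH}+\|f_i^{\perp}\|^2_{\ccalH}$. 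Hence $g_i(f_i^{\parallel})\leq g_i(f_i)$, with strict inequality whenever $f_i^{\perp}\neq 0$ because $\lambda>0$. Therefore any primal minimizer $f_i^{\star}$ of $g_i$ satisfies $f_i^{\perp}=0$, i.e.\ $f_i^{\star}\in\ccalH_i$, and since $\ccalH_i$ is finite dimensional the span is attained: $f_i^{\star}=\sum_{k=1}^{N}w_{i,k}\kappa(\bbx_{i,k},\cdot)$ for some $w_{i,k}\in\reals$. As $i\in\ccalV$ was arbitrary and the argument runs simultaneously for every coordinate, each primal minimizer $\bbf^{\star}$ of \eqref{eq:primaldualprob_emp} has the stated node-wise form.

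The main obstacle is really the bookkeeping of the first step: one must expand the proximity contribution $\sum_{i\in\ccalV}\sum_{j\in n_i}\mu_{ij}h_{ij}\big(f_i(\bbx_{i,k}),f_j(\bbx_{i,k})\big)$ in \eqref{eq:empirical_lagrangian}, invoke the symmetrization convention of the footnote to see how the edge $(i,j)$ is counted consistently with $(j,i)$, and check that $f_i$ enters $\ccalL^e$ only through evaluations anchored at node~$i$'s own samples, so that $\ccalH_i$ is exactly the subspace appearing in the statement. Once that finite evaluation set is pinned down, the rest is the routine orthogonality computation above; the only analytic ingredient beyond the reproducing property is the strict convexity afforded by $\lambda>0$, which is what promotes ``some minimizer lies in $\ccalH_i$'' to ``every minimizer lies in $\ccalH_i$''.
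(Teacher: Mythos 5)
Your proposal is correct and follows essentially the same route as the paper's own proof: decompose $f_i$ into its projection onto $\text{span}\{\kappa(\bbx_{i,k},\cdot)\}_{k=1}^N$ plus an orthogonal remainder, use the reproducing property to show all sample evaluations (and hence the loss and weighted constraint terms) are unchanged, and use the norm term to conclude the remainder must vanish. Your version is marginally sharper in that you invoke $\lambda>0$ to get strict decrease and hence that \emph{every} minimizer lies in the subspace, whereas the paper only concludes that \emph{some} minimizer does; both arguments share the same implicit bookkeeping assumption that $f_i$ enters the empirical Lagrangian only through evaluations at node $i$'s own samples.
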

Next, we shift to solving \eqref{eq:main_prob} in distributed online settings where nodes do not know the distribution of the random pair $(\bbx_i, y_i)$ but observe local samples $(\bbx_{i,k}, y_{i,k})$ sequentially, through use of the Representer Theorem as stated in Corollary \ref{thm:representer} that makes the function parameterization computable.

{With the Representer theorem in place as explained above, we are ready to present the conservative version of \eqref{eq:main_prob} to vanish the long term constraint violation. To do so, we add $\nu$ to the constraint in \eqref{eq:main_prob} to reformulate the problem as follows 
\begin{align}\label{eq:prob_zero_cons}
\!\!\!\bbf_\nu^\star\!=\! \argmin_{ \{f_i \in \ccalH\}}S(\bbf)
 \ \text{s.t.} \ H_{ij}(f_i,f_j\!) \!+\! \nu \!\le\! \!\gamma_{ij},~ {\forall} ~j\!\in\! n_i,
\end{align}
By modifying \eqref{eq:prob_zero_cons}, we consider a stricter constraint than \eqref{eq:main_prob}. Doing so permits us to establish that approximate algorithmic solutions to \eqref{eq:prob_zero_cons} ensure the constraints in \eqref{eq:main_prob} may be exactly satisfied. Moreover, we are able to do so while tightening existing bounds on the the sub-optimality in \cite{mahdavi2012trading} in Sec. \ref{sec:convergence}.}
\vspace{-1mm}
\section{Algorithm Development}\label{sec:algorithm}

In this section, we develop an online and decentralized algorithm for \eqref{eq:prob_zero_cons} when  $\{f_i \}_{i \in \ccalV}$ belong to a RKHS. Begin by defining the augmented Lagrangian relaxation of \eqref{eq:prob_zero_cons}:
\begin{align}\label{eq:lagrangian_dist}
\ccalL(\bbf,\bbmu) = & S(\bbf) + \ip{\bbmu,\bbH(\bbf)+\nu\one-\gam} - \tfrac{\delta\eta}{2}\norm{\bbmu}^2
\end{align}
where $\one$ denotes the all-one vector of appropriate size and the parameters $\delta ,\nu > 0$.  The dual regularization term  in \eqref{eq:lagrangian_dist} is included in the design to control the violation of non-negative constraints on the dual variable over time $t$. For future reference, we denote $\ccalL^s(\bbf,\bbmu)$ as the standard Lagrangian, which is \eqref{eq:lagrangian_dist} with $\delta=0$. We consider the stochastic approximation of \eqref{eq:lagrangian_dist} evaluated at sample $(\bbx_{i,t},y_{i,t})$,
\begin{align}\label{eq:stochastic_approx}
&\hat{\ccalL}_t(\bbf,\bbmu) \coloneqq S^t(\bbf) + \ip{\bbmu,\bbH^t(\bbf)+\nu\one-\gam} - \tfrac{\delta\eta}{2}\norm{\bbmu}^2
\end{align}
%
With this definition, we propose applying primal-dual method to \eqref{eq:stochastic_approx} -- see \cite{Arrow1958}. To do so, we first require the functional stochastic gradient of \eqref{eq:stochastic_approx} evaluated at a sample point $(\bbx_t, \bby_t)$. Begin by considering the local loss term in \eqref{eq:stochastic_approx} :
\begin{align}\label{eq:stochastic_grad}
  \nabla_{f_i} \ell_i(f_i(\bbx_{i,t}),y_{i,t})
= \frac{\partial \ell_i(f_i(\bbx_{i,t}),y_{i,t})}{\partial f_i(\bbx_{i,t})}\frac{\partial f_i(\bbx_{i,t})}{\partial f_i}
\end{align}
where we have applied the chain rule. Now, define the short-hand notation $\ell_i'(f_i(\bbx_{i,t}),y_{i,t}): ={\partial \ell_i(f_i(\bbx_{i,t}),y_{i,t})}/{\partial f_i(\bbx_{i,t})} $ for the derivative of $\ell_i(f(\bbx_{i,t}),y_{i,t})$ with respect to its first scalar argument $f_i(\bbx_{i,t})$ evaluated at $\bbx_{i,t}$. 

To evaluate the second term on the right-hand side of \eqref{eq:stochastic_grad}, differentiate both sides of the expression and use the reproducing property of the kernel with respect to $f_i$ to obtain
\begin{align}\label{eq:stochastic_grad2}
\frac{\partial  f_i(\bbx_{i,t})}{\partial f_i} = \frac{\partial \langle f_i , \kappa(\bbx_{i,t}, \cdot) \rangle _{\ccalH}}{\partial f_i}
= \kappa(\bbx_{i,t},\cdot).
\end{align}
%
%
%
Substitute the kernel at $\bbx_{i,t}$ on the right of \eqref{eq:stochastic_grad2} into \eqref{eq:stochastic_grad}:
\begin{align}\label{eq:lagg_derv}
\nabla_{f_i}\hat{\ccalL}_{t}(\bbf_t,\bbmu_t)&=\ell_i'(f_i(\bbx_{i,t}),y_{i,t}) ~\kappa(\bbx_{i,t},\cdot)+\lambda f_i\\
&\quad+  \sum_{j\in n_i}\mu_{ij}h'_{ij}(f_i(\bbx_{i,t}),f_j(\bbx_{i,t}))~\kappa(\bbx_{i,t},\cdot).\nonumber
\end{align}
where we apply analogous logic as that which yields \eqref{eq:stochastic_grad} to \eqref{eq:lagg_derv}. To simplify, define the $V$-fold stacking of \eqref{eq:lagg_derv} as
%
\begin{align}\label{eq:lagg_derv_stack}
\nabla_\bbf\hat{\ccalL}_{t}(\bbf_t,\bbmu_t)=\text{vec}[\nabla_{f_i}\hat{\ccalL}_{t}(\bbf_t,\bbmu_t)].
\end{align}
With these definitions, saddle point method on the augmented Lagrangian \eqref{eq:lagrangian_dist}, which operates by alternating primal/dual stochastic gradient descent/ascent steps, is given as \cite{Arrow1958}:\vspace{-1mm}
%
\begin{align}
\bbf_{t+1}=& (1-\eta\lambda)\bbf_{t}-\eta \text{vec}\Big(\Big[\ell_i'(f_{i,t}(\bbx_{i,t}),y_{i,t}) \label{eq:primalupdate_comp}
\\
&+\sum_{j\in n_i}\mu_{ij,t}h'_{ij}(f_{i,t}(\bbx_{i,t}),f_{j,t}(\bbx_{i,t}))\Big]\kappa(\bbx_{i,t},\cdot) \Big), \nonumber\\
\bbmu_{t+1}=&\Big[\bbmu_t+\eta\nabla_{\bbmu}\hat{\ccalL}_{t}(\bbf_t,\bbmu_t)\Big]_{+}\label{eq:dualupdate}.
\end{align}
%
\begin{algorithm}[t]
\caption{Heterogeneous Adaptive Learning with Kernels (\textbf{\textbf{HALK}})}
\begin{algorithmic}
\label{alg:soldd}
\REQUIRE $\{\bbx_t,\bby_t,\epsilon_t \}_{t=0,1,2,...}$, $\eta$, $\nu$ and $\delta$
\STATE \textbf{initialize} ${f}_{i,0}(\cdot) = 0, \bbD_{i,0} = [], \bbw_0 = []$
\FOR{$t=0,1,2,\ldots$}
	\STATE {\bf loop in parallel} for agent $ i \in \ccalV$ 
	\INDSTATE Observe local training example realization $(\bbx_{i,t}, y_{i,t})$
	\INDSTATE Send $\bbx_{i,t}$ to neighbors $j\in n_i$ and receive $f_{j,t}(\bbx_{i,t})$  
	\INDSTATE Receive $\bbx_{j,t}$ from neighbors, $j\in n_i$ and send $f_{i,t}(\bbx_{j,t})$  \vspace{-3mm}
	\INDSTATE Compute unconstrained stochastic grad. step  using \eqref{eq:primalupdate}\vspace{-3mm}
	\INDSTATE Update dual variables for $j$ $\in$ $n_i$ using \eqref{eq:dualupdate_edge}
	\INDSTATE Update params: $\tbD_{i,t+1} = [\bbD_{i,t},\;\;\bbx_{i,t}]$, $\tbw_{i,t+1} $ \eqref{eq:param_update}
		\INDSTATE Greedily compress function using matching pursuit \vspace{-1mm}
	$$({f}_{i,t+1},\bbD_{i,t+1},\bbw_{i,t+1}) = \textbf{KOMP}(\tilde{f}_{i,t+1},\tbD_{i,t+1},\tbw_{i,t+1},\epsilon)$$ \vspace{-4mm}
	\STATE {\bf end loop}
\ENDFOR
\end{algorithmic}
\end{algorithm}
{We note that local losses $\ell$ are assumed differentiable; however, the technical development easily extends to non-differentiable cases and the theoretical analysis ends up being very similar  \cite[Eq. (9)]{Kivinen}. If a unique gradient is not available is practice, $\partial$  denotes a subgradient and we choose it arbitrarily from the set of available subgradients to perform the update in \eqref{eq:primalupdate_comp}.   }Moreover, we require the step-size $\eta< 1/\lambda$ for regularizer $\lambda>0$ in \eqref{eq:kernel_stoch_opt}. Observe that \eqref{eq:primalupdate_comp} decouples by agent $i\in\ccalV$:
\vspace{-1mm}
\begin{align}\label{eq:primalupdate}
f_{i,t+1}=& f_{i,t}(1-\eta\lambda)-\eta\Big[\ell_i'(f_{i,t}(\bbx_{i,t}),y_{i,t}) 
\\
&+ \sum_{j\in n_i}\mu_{ij,t}h'_{ij}(f_{i,t}(\bbx_{i,t}),f_{j,t}(\bbx_{i,t}))\Big]\kappa(\bbx_{i,t},\cdot).\nonumber
\end{align}
Note that the dual update in \eqref{eq:dualupdate} is vector-valued, and defined for each edge $(i,j)\in\ccalE$. Since the constraints involve only pairwise interactions between nodes $i$ and neighbors $j\in n_i$, the dual update separates along each edge $(i,j)$:
\begin{align}\label{eq:dual_gradient_seperability}
\!\!\!\!\nabla_{\mu_{ij}\!}\hat{\ccalL}_{t}(\!\bbf_t,\bbmu_t)\!=\!h_{ij}(f_{i,t}(\bbx_{i,t}),f_{j,t}(\bbx_{i,t}\!)\!)\!-\!\gamma_{ij}\!+\!{\nu}\!-\!\delta\eta\mu_{ij}.\!\!\!
\end{align}
The update of $\bbmu_t$ is carried out by substituting \eqref{eq:dual_gradient_seperability} in  \eqref{eq:dualupdate} and using the fact that vector-wise projection is applied entry-wise and thus the individual local  updates $\mu_{ij}$ can be written as
\begin{align}\label{eq:dualupdate_edge}
\mu_{ij,t+1}\!\!=\!\!\Big[\mu_{ij,t}(\!1\!-\!\delta\eta^2)\!+\!\eta\!\Big(\!\!h_{ij}(f_{i,t}(\bbx_{i,t}),f_{j,t}(\bbx_{i,t}))\!-\!\gamma_{ij}\!\!+\!{\nu}\!\Big)\!\Big]_{+}.
\end{align}
The sequence of $(\bbf_t,\bbmu_t)$ is initialized by $\bbf_0=0\in\ccalH^V$ and $\bbmu=0\in \mbR^E_+$. Using the Representer theorem, $f_{i,t}$ can be written in terms of kernels evaluated at past observations as\vspace{-2mm}
%
%
\begin{align}\label{eq:kernel_expansion_t}
f_{i,t}(\bbx) 
= \sum_{n=1}^{t-1} w_{i,n} \kappa(\bbx_{i,n}, \bbx)
= \bbw_{i,t}^T\boldsymbol{\kappa}_{\bbX_{i,t}}(\bbx) \; .
\end{align}
 We define $\bbX_{i,t} = [\bbx_{i,1}, \ldots, \bbx_{i,t-1}]\in \reals^{p\times (t-1)}$,
 $\boldsymbol{\kappa}_{\bbX_{i,t}}(\cdot) = [\kappa(\bbx_{i,1},\cdot),\ \ldots\ ,\kappa(\bbx_{i,t-1},\cdot)]^T$, and $\bbw_{i,t} = [w_{i,1} , \ \ldots \, w_{i,t-1}]^T \in \reals^{t-1}$ on the right-hand side of \eqref{eq:kernel_expansion_t}.   
 Combining the update in \eqref{eq:primalupdate} along with the kernel expansion in \eqref{eq:kernel_expansion_t}, implies that the primal functional stochastic descent step in $\ccalH^V$ results in the following $V$ parallel parametric updates on both kernel dictionaries $\bbX_i$ and $\bbw_i$:\vspace{-2mm}
\begin{align}\label{eq:param_update} 
\bbX_{i,t+1} =& [\bbX_{i,t}, \;\; \bbx_{i,t}] \; ,  \\
 [\bbw_{i,t+1}]_u  =&  
\begin{cases}
    (1 - \eta \lambda) [\bbw_{i,t}]_u \qquad\qquad\qquad \text{for } 0 \leq u \leq t-1 \\
    -\eta\Big(\ell_i'(f_{i,t}(\bbx_{i,t}),y_{i,t}) +\sum_{j\in n_i}\mu_{ij,t}h'_{ij} \\
\qquad\times (f_{i,t}(\bbx_{i,t}),f_{j,t}(\bbx_{i,t}))\Big),\text{for}~u=t
  \end{cases} \nonumber 
\end{align}
%
 From \eqref{eq:param_update} we note that each time one more column gets added to the columns in $\bbX_{i,t}$, an instance of the curse of kernelization \cite{wang2012breaking}. We define the number of data points, i.e., the number of columns of $\bbX_{i,t}$ at time $t$ as the \emph{model order}. We note that for the update in  \eqref{eq:primalupdate}, the model order is $t-1$ and it grows unbounded with iteration index $t$. This challenge often appears in connecting nonparametric statistics and optimization methods \cite{slavakis2013online}. Next, motivated by \cite{POLK}, we now define compressive subspace projections of the function sequence defined by \eqref{eq:primalupdate_comp} to trade off memory and optimality.

\vspace{2mm}
%
{\bf \noindent Complexity Control via Subspace Projections}
%
 To alleviate the aforementioned memory bottleneck, we project the function sequence  \eqref{eq:primalupdate} onto a lower dimensional subspace such that $\ccalH_\bbD \subseteq \ccalH$, where $\ccalH_\bbD$ is represented by a dictionary  $\bbD = [\bbd_1,\ \ldots,\ \bbd_M] \in \reals^{p \times M}$. Being specific, $\ccalH_\bbD$ has the form $\ccalH_\bbD = \{f\ :\ f(\cdot) = \sum_{n=1}^M w_n\kappa(\bbd_n,\cdot) = \bbw^T\boldsymbol{\kappa}_{\bbD}(\cdot) \}=\text{span}\{\kappa(\bbd_n, \cdot) \}_{n=1}^M$, and $\{\bbd_n\} \subset \{\bbx_u\}_{u\leq t}$. For convenience we define $\boldsymbol{\kappa}_{\bbD}(\cdot)=[\kappa(\bbd_1,\cdot), \ldots,  \kappa(\bbd_M,\cdot)]^T$, and $\bbK_{\bbD,\bbD}$ as the resulting kernel matrix from this dictionary. In a similar manner, we define dictinaries $\bbD_{i,t}$ and subspace $\ccalH_{\bbD_{i,t}}$  for each agent at time $t$. Similarly, the model order (i.e., the number of columns) of the dictionary $\bbD_{i,t}$ is denoted by $M_{i,t}$. We enforce function parsimony by selecting dictionaries $\bbD_i$ with $M_{i,t} << \ccalO(t)$ for each $i$ \cite{POLK}.

Now, we propose projecting the update in \eqref{eq:primalupdate} to a lower dimensional subspace $\ccalH_{\bbD_{i,t+1}}=\text{span}\{ \kappa(\bbd_{i,n}, \cdot) \}_{n=1}^{M_{t+1}}$  as
\begin{align}\label{eq:projection_hat}
{f}_{i,t+1}=& \argmin_{f \in \ccalH_{\bbD_{i,t+1}}} \| f -(f_{i,t}  - \eta \nabla_{f_i}\hat{\ccalL}_{t}(\bbf_t,\bbmu_t)) 
\|_{\ccalH}^2  \\
:=&\ccalP_{\ccalH_{\bbD_{i,t+1}}} \Big[ 
f_{i,t}(1-\eta\lambda)-\eta\Big[\ell_i'(f_{i,t}(\bbx_{i,t}),y_{i,t})\nonumber\\
&+ \sum_{j\in n_i}\mu_{ij,t}h'_{ij}(f_{i,t}(\bbx_{i,t}),f_{j,t}(\bbx_{i,t}))\Big]\kappa(\bbx_{i,t},\cdot)\Big]  \nonumber
\end{align}
where we define the projection operator $\ccalP_{\bbD_{i,t+1}}$ onto subspace $\ccalH_{\bbD_{i,t+1}}\subset \ccalH$ by the update \eqref{eq:projection_hat}.
%

\begin{algorithm}[t]
{\caption{Destructive Kernel Orthogonal Matching Pursuit (KOMP) \hspace{-2mm}}
\begin{algorithmic}\label{alg:komp}
\REQUIRE  function $\tilde{f}$ defined by dict. $\tbD \in \reals^{p \times \tilde{M}}$, coeffs. $\tbw \in \reals^{\tilde{M}}$, approx. budget  $\epsilon > 0$ \\
\STATE \textbf{initialize} $f=\tilde{f}$, dictionary $\bbD = \tbD$ with indices $\ccalI$, model order $M=\tilde{M}$, coeffs.  $\bbw = \tbw$.
\WHILE{candidate dictionary is non-empty $\ccalI \neq \emptyset$}
{\FOR {$j=1,\dots,\tilde{M}$}
	\STATE Find minimal error with basis point $\bbd_j$ removed \vspace{-2mm}
	$$\gamma_j = \min_{\bbw_{\ccalI \setminus \{j\}}\in\reals^{{M}-1}} \|\tilde{f}(\cdot) - \sum_{k \in \ccalI \setminus \{j\}} w_k \kappa(\bbd_k, \cdot) \|_{\ccalH} \; .$$ \vspace{-3mm}
\ENDFOR}
	\STATE Find minimal error index: $j^* = \argmin_{j \in \ccalI} \gamma_j$
	\INDSTATE{{\bf{if }} minimal error exceeds threshold $\gamma_{j^*}> \epsilon$}
	\INDSTATED{\bf stop} 
	\INDSTATE{\bf else} 
	
	\INDSTATED Prune dictionary $\bbD\leftarrow\bbD_{\ccalI \setminus \{j^*\}}$
	\INDSTATED Revise set $\ccalI \leftarrow \ccalI \setminus \{j^*\}$, model order ${M} \leftarrow {M}-1$.\vspace{-4mm}
	\INDSTATED Compute weights $\bbw$ defined by current dict. $\bbD$
 	\vspace{-2mm}$$\bbw = \argmin_{\bbw \in \reals^{{M}}} \lVert \tilde{f}(\cdot) - \bbw^T\boldsymbol{\kappa}_{\bbD}(\cdot) \rVert_{\ccalH}$$
 	\vspace{-5mm}
	\INDSTATE {\bf end}
\ENDWHILE	
\RETURN ${f},\bbD,\bbw$ of model order $M \leq \tilde{M}$ s.t. $\|f - \tilde{f} \|_{\ccalH}\leq \eps$
\end{algorithmic}}
\end{algorithm}

\smallskip
{\bf \noindent Coefficient update.} 
The update \eqref{eq:projection_hat} may be rewritten in terms of updates for the dictionary $\tbD_{i,t+1}$ and coefficients $\tbw_{i,t+1}$:
\begin{align}\label{eq:param_tilde} 
&\tbD_{i,t+1}  = [\bbD_{i,t},\;\;\bbx_{i,t}]\; ,  \\
& [\tbw_{i,t+1}]_u   = 
\begin{cases}
    (1 - \eta \lambda) [\bbw_{i,t}]_u, \quad \text{for } {0 \leq u \leq M_t} \\
       [\bbw_{i,t+1}]_u \ \text{from}\ \eqref{eq:param_update} \text{for } \  u=M_{t}+1\nonumber
  \end{cases} 
\end{align}
from the un-projected functional update step in \eqref{eq:primalupdate}. We denote the un-projected functional update as $\tilde{f}_{i,t+1} ={f}_{i,t} -\eta \nabla_{f_i}\hat{\ccalL}_{t}(\bbf_t,\bbmu_t)$ whose stacked functional version, using the stacked functional stochastic gradient \eqref{eq:lagg_derv_stack}, can be written as
\begin{align}\label{eq:stacked_sgd_tilde}
\tilde{\bbf}_{t+1} ={\bbf}_{t} -\eta \nabla_{\bbf}\hat{\ccalL}_{t}(\bbf_t,\bbmu_t) \; .
\end{align}
%
%
The number of columns of dictionary $\tbD_{i,t+1}$ is $M_{i,t} + 1$, which is also the length of $\tbw_{i,t+1}$. For now, to simplify notation, we denote {$\tilde{M}_{i,t+1}:=M_{i,t} + 1$}. For a given dictionary $\bbD_{i,t+1}$, projecting $\tilde{f}_{i,t+1}$ onto the subspace $\ccalH_{\bbD_{i,t+1}}$ is equivalent to finding coefficients $\bbw_{i,t+1}$ w.r.t. dictionary $\bbD_{i,t+1}$ given as
\begin{equation} \label{eq:hatparam_update}
\bbw_{i,t+1}=  \bbK_{\bbD_{i,t+1} \bbD_{i,t+1}}^{-1} \bbK_{\bbD_{i,t+1} \tbD_{i,t+1}} \tbw_{i,t+1} \;,
\end{equation}
where we define the cross-kernel matrix $\bbK_{\bbD_{i,t+1},\tbD_{i,t+1}}$ whose $(n,m)^\text{th}$ entry is given by $\kappa(\bbd_{i,n},\tbd_{i,m})$. The other kernel matrices $\bbK_{\tbD_{i,t+1},\tbD_{i,t+1}}$ and $\bbK_{\bbD_{i,t+1},\bbD_{i,t+1}}$ are defined similarly.  The number of columns in $\bbD_{i,t+1}$ is ${M}_{i,t+1}$, while the number of columns in $\tbD_{t+1}$ [cf. \eqref{eq:param_tilde}] is $\tilde{M}_{i,t+1}=M_{i,t} + 1$. Next we see, how the dictionary $\bbD_{i,t+1}$ is obtained from  $\tbD_{i,t+1}$.

\smallskip
{\bf \noindent Dictionary Update.} 
%
The dictionary $\bbD_{i,t+1}$ is selected based upon greedy compression \cite{258082}, i.e., $\bbD_{i,t+1}$ is formed from $\tbD_{i,t+1}$ by selecting a subset of  $M_{i,t+1}$ columns from $\tilde{M}_{i,t+1}$ number of columns of $\tbD_{i,t+1}$  that best approximate $\tilde{f}_{i,t+1}$ in terms of Hilbert norm error, i.e., $\|f_{i,t+1} - \tilde{f}_{i,t+1} \|_{\ccalH} \leq \eps $, where $\eps$ is the error tolerance, which may be done by \emph{kernel orthogonal matching pursuit} (KOMP) \cite{Vincent2002} written as $(f_{i,t+1}, \bbD_{i,t+1}, \bbw_{i,t+1})= \text{KOMP}(\tilde{f}_{i,t+1},\tilde{\bbD}_{i,t+1}, \tilde{\bbw}_{i,t+1},\eps)$.

We use a destructive variant of KOMP with pre-fitting as done in \cite{POLK} {and present it in Algorithm \ref{alg:komp}}. This algorithm starts with the full dictionary  $\tbD_{i,t+1}$ and sequentially removes the dictionary elements till the  condition $\|f_{i,t+1} - \tilde{f}_{i,t+1} \|_{\ccalH} \leq \eps $ is violated. In order to ensure the boundedness of the primal iterates in the subsequent section, we consider a variant of KOMP that explicitly enforces the projection to be contained within a finite Hilbert norm ball, which has the practical effect of thresholding the coefficient vector if it climbs above a certain large finite constant. Next we delve into the convergence of updates \eqref{eq:projection_hat} and \eqref{eq:dualupdate_edge}, summarized as Algorithm \ref{alg:soldd}.

\vspace{-1mm}
\section{Convergence Analysis}\label{sec:convergence}

In this section, we establish the convergence of Algorithm \ref{alg:soldd} by characterizing   both objective sub-optimality and constraint violation in expectation. Before doing so, we define terms to clarify the analysis. Specifically, the projected functional stochastic gradient associated with  \eqref{eq:projection_hat} is defined as
\begin{align}\label{eq:proj_stochasticgrad}
\!\!\!\!\!\!\!\tilde{\nabla}_{\!\!f_i}\hat{\ccalL}_{t}(\!\bbf_{t},\bbmu_{t})\!=\!\!\big(f_{i,t}\!-\!\!\ccalP_{\ccalH_{\bbD_{i,t+1}}}\![f_{i,t}\!-\!\eta \nabla_{f_i}\hat{\ccalL}_{t}(\bbf_{t},{\bbmu}_{t})]\big)\!/\eta.\!\!\!
\end{align}
Using \eqref{eq:proj_stochasticgrad}, the update \eqref{eq:projection_hat} can be rewritten as $f_{i,t+1}=f_{i,t}-\eta\tilde{\nabla}_{f_i}\hat{\ccalL}_{t}(\bbf_{t},\bbmu_{t})$. We stack the projected stochastic functional gradient $\tilde{\nabla}_{f_i}\hat{\ccalL}_{t}(\bbf_{t},\bbmu_{t})$ and define the stacked version \\$\tilde{\nabla}_{\bbf}\hat{\ccalL}_{t}(\bbf_{t},\bbmu_{t})=[\tilde{\nabla}_{f_1}\hat{\ccalL}_{t}(\bbf_{t},\bbmu_{t}),\dots,\tilde{\nabla}_{f_V}\hat{\ccalL}_{t}(\bbf_{t},\bbmu_{t})]$. Using this stacked gradient, the update \eqref{eq:projection_hat} then takes the form
\begin{align}\label{eq:projected_func_update}
\bbf_{t+1}=\bbf_t-\eta \tilde{\nabla}_{\bbf}\hat{\ccalL}_{t}(\bbf_{t},\bbmu_{t}).
\end{align}
Next, we state the assumptions required for the convergence.\vspace{-1mm}
\begin{assumption}\label{as:first}
The feature space $\ccalX\subset\reals^p$ and target domain $\ccalY\subset\reals$ are compact, and the  kernel map may be bounded as\vspace{-1mm}
\begin{equation}\label{eq:bounded_kernel}
\sup_{\bbx\in\ccalX} \sqrt{\kappa(\bbx, \bbx )} = X < \infty
\end{equation}
\end{assumption}\vspace{-5mm}
\begin{assumption}\label{as:second}
The local losses $\ell_i(f_i(\bbx), y)$ {are convex and non-differentiable} with respect to the first (scalar) argument $f_i(\bbx)$ on $\reals$ for all $\bbx\in\ccalX$ and $y\in\ccalY$.  
Moreover, the instantaneous losses $\ell_i: \ccalH \times \ccalX \times \ccalY \rightarrow \reals$ are $C_i$-Lipschitz continuous
\begin{equation}\label{eq:lipschitz}
| \ell_i(z, y) - \ell_i( z', y ) | \leq C_i |z - z'|  \ \text{ for all $z  $ with $y$ fixed}. 
\end{equation}
Further denote $C :=\max_i C_i$ (largest modulus of continuity).
\end{assumption}
\begin{assumption}\label{as:third}
The constraint functions $h_{ij}$ for all $(i,j)\in \ccalE$ are all uniformly $L_h$-Lipschitz continuous in its first (scalar) argument; i.e., for any $z$, $z'\in \reals$, there exist constant $L_h$, s.t.
\begin{equation}
|h_{ij}(z,y)-h_{ij}(z',y)|\le L_h|z-z'|
\end{equation}
and is also convex w.r.t the first argument $z$.
\end{assumption}
\begin{assumption}\label{as:fourth}
There exists $\bbf^{\dagger}$ such that for all $(i,j)\in\ccalE$, we have $h_{ij}(f^{\dagger}_i,f^{\dagger}_j)+\xi\le \gamma_{ij}$, for some $\xi>0$, which implies that the constraint is strictly satisfied.
\end{assumption}

\begin{assumption}\label{as:fifth}
The functions $f_{i,t+1}$ output from KOMP have Hilbert norm bounded by $R_{\ccalB}< \infty$. Also, the optimal $f_i^\star$ lies in the ball $\ccalB$ with radius $R_{\ccalB}$.
\end{assumption}
%
%
%
Often,  Assumption \ref{as:first} holds by  data domain itself. Assumptions \ref{as:second} and \ref{as:third}  ensure that the constrained stochastic optimization problem is { convex and the objective as well as constraint function satisfies the Lipschitz property, which are typical of first-order methods \cite{Kivinen}. Moreover, the analysis in this work easily extends to the case where $\ell$ is non-differentiable.} Assumption \ref{as:fourth}, i.e., Slater's condition, ensures the feasible set of \eqref{eq:main_prob} is non-empty, and is standard in primal-dual methods \cite{nedic2009subgradient}. Assumption \ref{as:fifth} ensures that the algorithm iterates and the optimizer are finite, and their domains overlap. It may be explicitly enforced by dividing the norm of the coefficient vector output from KOMP by a large constant.

We begin with establishing an upper bound on the memory order of function $f_{i,t}$ obtained from Algorithm \ref{alg:soldd}.  Hence, using   Assumption \ref{as:second} and \ref{as:third} we present the model order theorem. 
\begin{thm}\label{thm:bound_memory_order}
Let $f_{i,t}$ denote the function sequence of agent $i$ at $t$th instant generated from Algorithm \ref{alg:soldd} with dictionary $\bbD_{i,t}$. Denote $M_{i,t}$ as the model order representing the number of dictionary elements in $\bbD_{i,t}$. Then with constant step size $\eta$ and compression budget $\eps$, for a Lipschitz Mercer kernel $\kappa$ on a compact set $\ccalX \subset \reals^p$, there exists a constant $\beta$ such that for any training set $\{\bbx_{i,t}\}_{t=1}^\infty$,  $M_{i,t}$ satisfies\vspace{-2mm}
\begin{align}\label{eq:agent_mo}
M_{i,t}\le \beta \Bigg(\frac{{R_{M_{i,t}}}}{\alpha}\Bigg)^{2p},
\end{align}
where $\alpha=\eps/\eta$, ${R_{M_{i,t}}}= C + L_h E R_{i,t}$ and $R_{i,t}=\max_{j\in n_i}|\mu_{ij,t}|$. 
The model order of the multi-agent system is then
%
$M_t=\sum_{i=1}^N M_{i,t}$.
%
\end{thm}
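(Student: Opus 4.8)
The plan is to adapt the dictionary–stopping argument used for sparse kernel methods in \cite{POLK}, carrying through the extra terms that the dual variables introduce. First I would pin down \emph{when} $M_{i,t}$ can increase. From \eqref{eq:param_tilde}, the pre-projection iterate is $\tilde f_{i,t+1} = (1-\eta\lambda)f_{i,t} + w_{i,t}\,\kappa(\bbx_{i,t},\cdot)$ with $(1-\eta\lambda)f_{i,t}\in\ccalH_{\bbD_{i,t}}$, and, using Assumptions \ref{as:second}--\ref{as:third} to bound $|\ell_i'|\le C_i\le C$ and $|h'_{ij}|\le L_h$ together with $|n_i|\le E$,
\[
|w_{i,t}| \le \eta\Big(|\ell_i'(f_{i,t}(\bbx_{i,t}),y_{i,t})| + \sum_{j\in n_i}|\mu_{ij,t}|\,|h'_{ij}|\Big) \le \eta\big(C + L_h E R_{i,t}\big) = \eta R_{M_{i,t}} .
\]
Inspecting Algorithm \ref{alg:komp}, its output model order is $\tilde M_{i,t+1}-r$ for some $r\ge 0$ columns pruned; since $\tilde M_{i,t+1}=M_{i,t}+1$, the order strictly increases only when $r=0$, i.e.\ KOMP halts on its first sweep with every candidate removal over budget. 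In particular, deleting the newest atom $\bbx_{i,t}$ — which leaves exactly $\bbD_{i,t}$ — is over budget, so $\mathrm{dist}_{\ccalH}(\tilde f_{i,t+1},\ccalH_{\bbD_{i,t}}) > \eps$; since translating by the subspace element $(1-\eta\lambda)f_{i,t}$ and factoring out $w_{i,t}$ gives $\mathrm{dist}_{\ccalH}(\tilde f_{i,t+1},\ccalH_{\bbD_{i,t}}) = |w_{i,t}|\,\mathrm{dist}_{\ccalH}(\kappa(\bbx_{i,t},\cdot),\ccalH_{\bbD_{i,t}})$, we obtain the non-compressibility bound $\mathrm{dist}_{\ccalH}(\kappa(\bbx_{i,t},\cdot),\ccalH_{\bbD_{i,t}}) > \eps/|w_{i,t}| \ge \eps/(\eta R_{M_{i,t}}) = \alpha/R_{M_{i,t}}$.

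Next I would upgrade this one-step fact into a packing statement about the retained dictionary $\bbD_{i,t}=\{\bbd_{i,1},\dots,\bbd_{i,M_{i,t}}\}$. Each $\bbd_{i,n}$ is a past sample appended at some time $\tau_n$, and because a pruned sample is never re-inserted (only the current observation is a fresh KOMP candidate) it survives throughout $[\tau_n,t]$; ordering so that $\tau_1<\dots<\tau_{M_{i,t}}$, every $\bbd_{i,m}$ with $m<n$ already lies in $\bbD_{i,\tau_n}$, hence $\mathrm{span}\{\kappa(\bbd_{i,m},\cdot)\}_{m<n}\subseteq\ccalH_{\bbD_{i,\tau_n}}$. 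Applying the previous paragraph at time $\tau_n$ yields $\mathrm{dist}_{\ccalH}\big(\kappa(\bbd_{i,n},\cdot),\mathrm{span}\{\kappa(\bbd_{i,m},\cdot)\}_{m<n}\big) > \alpha/R_{M_{i,t}}$, where $R_{M_{i,t}}$ is read as a uniform upper bound on $R_{M_{i,s}}$ over $s\le t$ (consistent with the statement); in particular $\|\kappa(\bbd_{i,n},\cdot)-\kappa(\bbd_{i,m},\cdot)\|_{\ccalH} > \alpha/R_{M_{i,t}}$ for all $m\ne n$. Expanding $\|\kappa(\bbx,\cdot)-\kappa(\bbx',\cdot)\|_{\ccalH}^2 = \kappa(\bbx,\bbx)-2\kappa(\bbx,\bbx')+\kappa(\bbx',\bbx')$ and using that $\kappa$ is Lipschitz with constant $L_\kappa$ on the compact $\ccalX$ gives $\|\kappa(\bbx,\cdot)-\kappa(\bbx',\cdot)\|_{\ccalH}^2 \le 2L_\kappa\|\bbx-\bbx'\|$, so the atoms are Euclidean-separated: $\|\bbd_{i,n}-\bbd_{i,m}\| > \alpha^2/(2L_\kappa R_{M_{i,t}}^2)$ for all $m\ne n$.

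Finally I would close with a volumetric packing bound: the $M_{i,t}$ atoms lie in $\ccalX$, which has finite diameter $D$ by Assumption \ref{as:first}, and are pairwise $\rho$-separated with $\rho = \alpha^2/(2L_\kappa R_{M_{i,t}}^2)$; the number of such points is at most $(1+2D/\rho)^p \le \beta\,(R_{M_{i,t}}/\alpha)^{2p}$ for a constant $\beta$ depending only on $p$, $D$, and $L_\kappa$ (absorbing the factor $(4L_\kappa D)^p$), which is exactly \eqref{eq:agent_mo}; the network bound $M_t=\sum_i M_{i,t}$ is immediate from the definition of the aggregate model order. The step I expect to be the crux is the persistence/chaining argument — showing that pruning is permanent and that the dictionary seen by a later insertion already contains every earlier atom that still survives at time $t$, so the per-step non-compressibility inequality can be chained into a genuine pairwise separation of the \emph{final} dictionary (this is where one must be careful about repeated samples and about the intra-sweep deletion order in KOMP). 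A secondary nuisance is that $R_{M_{i,t}}$ is time-varying through $R_{i,t}=\max_{j\in n_i}|\mu_{ij,t}|$, which is handled by passing to $\max_{s\le t}R_{M_{i,s}}$ or invoking the boundedness of the dual iterates established in the convergence analysis; the remaining kernel-packing computation is routine.
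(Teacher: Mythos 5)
Your proposal follows essentially the same route as the paper's own proof: both take the contrapositive of the KOMP stopping criterion applied to the newest atom to obtain the non-compressibility condition $\mathrm{dist}\big(\kappa(\bbx_{i,t},\cdot),\ccalH_{\bbD_{i,t}}\big)>\eps/(\eta R_{M_{i,t}})$, bound the new coefficient's magnitude by $\eta(C+L_h E R_{i,t})$ via Assumptions \ref{as:second}--\ref{as:third}, and close with a packing/covering-number argument for a Lipschitz Mercer kernel on the compact set $\ccalX$, yielding the $(R_{M_{i,t}}/\alpha)^{2p}$ bound. The only difference is that you inline the final step --- the persistence argument giving pairwise $\ccalH$-separation of surviving atoms, the Lipschitz conversion to Euclidean separation, and the volumetric bound --- whereas the paper delegates precisely this to \cite[Proposition 2.2]{1315946}; the content is the same.
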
\vspace{-1mm}

Thm. \ref{thm:bound_memory_order} establishes an explicit non-asymptotic bound on the model complexity as a function of problem parameters as well as algorithm parameters $\epsilon$ and $\eta$. The proof of Thm. \ref{thm:bound_memory_order} is provided in  Appendix  \ref{app:proof_memoryorder}. Observe from \eqref{eq:agent_mo} that the model order depends on the algorithm parameters only through the ratio $\alpha = \eta/\epsilon$. In other words, by keeping $\alpha$ large, it is possible to ensure that the model order remains small. On the other hand, if we keep $\alpha$ small, the bound in \eqref{eq:agent_mo} would be large. It is remarked that the non-asymptotic characterization of the model complexity in the distributed non-parametric setting in Thm. \ref{thm:bound_memory_order} is the first of its kind. Existing results, such as in \cite{POLK}, only provide asymptotic guarantees. As an implication of Thm. \ref{thm:bound_memory_order}, observe that to ensure that the model order remains bounded by a given $M$, we must choose $\alpha = \ccalO(M^{-1/2p})$. Intuitively, if the model order is fixed a priori, the optimality gap cannot decay to zero in general, regardless of the number of iterations the algorithm is run. The resulting error floor introduced due to such a budget restriction will be characterized subsequently. 

Via Lemma \ref{thm:bound_gap}--\ref{lemma:inst_lagrang_diff} (see the supplementary material), we establish our central result, which is the mean convergence of Algorithm \ref{alg:soldd} in terms of sub-optimality and feasibility. 

 \vspace{-2mm}
\begin{thm}\label{thm:convergence}
Suppose Assumptions \ref{as:first}-\ref{as:fifth} hold  and  $(\bbf_t,\bbmu_t)$ be the primal-dual sequence of Algorithm \ref{alg:soldd} under constant step-size $\eta=T^{-1/2}$, and $\nu=\zeta T^{-1/2}+ \Lambda \alpha$, where $\zeta\geq \frac{1}{2}\bigg[R_{\ccalB}^2 + (1+\delta)\bigg(2 + 2 \Big(\frac{{4}VR_{\ccalB}(C X+\lambda R_{\ccalB})}{\xi}\Big)^{2}\bigg) + K\bigg]$ and $\Lambda\geq 4VR_{\ccalB}$ and ${K= 8V X^2 C^2 +4V\lambda^2 \cdot R_{\ccalB}^2+2EK_1+2EL_h^2X^2\cdot R_{\ccalB}^2}$. 

%
\begin{enumerate}[label=(\roman*)]
\item The average expected sub-optimality decays as 
\begin{align}\label{eq:func_order}
\frac{1}{T}\sum_{t=1}^T\mbE[S(\bbf_t)-S(\bbf^\star)]\leq\ccalO(T^{-1/2} + \alpha).
\end{align}
%
%
\item Moreover, the average of aggregate constraint is met, i.e.,   
 \begin{align}\label{eq:constr_order}
\!\!\!\!\!\!\!\!\!\!\frac{1}{T}\sum_{t=1}^T\mbE\Big[H_{ij}(f_{i,t},f_{j,t})-\gamma_{ij}\Big]\le 0, ~\forall ~(i,j)\in\ccalE.
\end{align}
%
\end{enumerate}
\end{thm}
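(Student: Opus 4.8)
The plan is to run the standard primal-dual (Arrow--Hurwicz) analysis on the augmented Lagrangian $\hat{\ccalL}_t$, but carefully track the three perturbations that distinguish this setting: the projection error from KOMP (quantified by Lemma \ref{lemma:diff_of_grad}), the dual regularization term $-\tfrac{\delta\eta}{2}\|\bbmu\|^2$, and the constraint tightening $\nu$. First I would start from the instantaneous Lagrangian decrement in Lemma \ref{lemma:inst_lagrang_diff}, which telescopes the primal and dual distances $\|\bbf_t-\bbf\|^2_{\ccalH}$ and $\|\bbmu_t-\bbmu\|^2$; summing over $t=1,\dots,T$, dividing by $T$, and using $\eta=T^{-1/2}$ gives a bound of the form
\begin{align*}
\frac{1}{T}\sum_{t=1}^T\big[\hat{\ccalL}_t(\bbf_t,\bbmu)-\hat{\ccalL}_t(\bbf,\bbmu_t)\big]
&\le \frac{\|\bbf_1-\bbf\|^2_{\ccalH}+\|\bbmu_1-\bbmu\|^2}{2\eta T}
+\frac{\eta}{2}\cdot\frac{1}{T}\sum_{t=1}^T\big(2\|\nabla_\bbf\hat{\ccalL}_t\|^2_{\ccalH}+\|\nabla_{\bbmu}\hat{\ccalL}_t\|^2\big)\\
&\quad+\frac{\sqrt{V}\eps}{\eta T}\sum_{t=1}^T\|\bbf_t-\bbf\|_{\ccalH}+\frac{V\eps^2}{\eta}.
\end{align*}
Taking expectations and invoking Lemma \ref{lemma:bound_primal_dual_grad} controls the squared-gradient terms; the key point is that the $\|\bbmu_t\|^2$ factors appearing there must be absorbed, which is exactly what the dual regularization $\delta$ was introduced to do. The $\eps/\eta=\alpha$ terms are what produce the additive $\alpha$ in \eqref{eq:func_order}, after bounding $\|\bbf_t-\bbf\|_{\ccalH}\le 2R_{\ccalB}$ via Assumption \ref{as:fifth}.

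Next I would make two substitutions into the master inequality. For sub-optimality \eqref{eq:func_order}: take expectations, replace $\hat{\ccalL}_t$ by its mean $\ccalL$ (the samples are i.i.d.\ and $\bbf_t,\bbmu_t$ depend only on the past), set $\bbf=\bbf^\star_\nu$ and $\bbmu=\bbmu_t$ so that the dual difference collapses and $S(\bbf_t)-S(\bbf^\star_\nu)\le \ip{\bbmu_t,\bbH(\bbf^\star_\nu)+\nu\one-\gam}+\cdots\le 0$ is used on the right side; then add and subtract $S(\bbf^\star)$ and bound $S(\bbf^\star_\nu)-S(\bbf^\star)=\ccalO(\nu)=\ccalO(T^{-1/2}+\alpha)$ via Lemma \ref{thm:bound_gap}. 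Collecting the $T^{-1/2}$ terms (from $1/(\eta T)$, $\eta$, $\nu$) and the $\alpha$ terms (from $\eps/\eta$ and $\Lambda\alpha$ in $\nu$) yields the claimed rate; the explicit lower bounds on $\zeta$ and $\Lambda$ in the hypothesis are precisely the constants that make the non-negative leftover terms cancel.

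For feasibility \eqref{eq:constr_order}: here I would instead choose $\bbf=\bbf^\star_\nu$ and keep $\bbmu$ free, use convexity of $S$ and of the constraints to lower-bound $\frac1T\sum_t[\ccalL(\bbf_t,\bbmu)-\ccalL(\bbf^\star_\nu,\bbmu_t)]$ by $\ip{\bbmu,\frac1T\sum_t(\bbH(\bbf_t)+\nu\one-\gam)}-\tfrac{\delta\eta}{2}\|\bbmu\|^2$ minus an $\ccalO(\nu)$ suboptimality term, then maximize the right-hand side over $\bbmu\ge 0$ (a simple quadratic) to extract $\frac1T\sum_t \mbE[H_{ij}(f_{i,t},f_{j,t})-\gamma_{ij}]$. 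The tightening by $\nu$ and the ball radius $R_{\ccalB}$ (Slater, Assumption \ref{as:fourth}) guarantee the bound on $\|\bbmu_t\|$ needed to close the loop, and the choice $\nu=\zeta T^{-1/2}+\Lambda\alpha$ is exactly large enough that the residual $\ccalO(T^{-1/2}+\alpha)$ slack is dominated by $\nu$, leaving the average constraint violation nonpositive. The main obstacle I expect is the circular dependence between the dual-variable magnitude $\|\bbmu_t\|$ and the gradient bounds in Lemma \ref{lemma:bound_primal_dual_grad}: one must show $\sup_t\mbE\|\bbmu_t\|$ stays $\ccalO(1)$, which requires choosing $\delta$ so the contraction factor $(1-\delta\eta^2)$ in the dual recursion \eqref{eq:dualupdate_edge} dominates the $\ccalO(\eta)$ growth from the constraint residual --- and then feeding that back to control the $4VX^2L_h^2M\|\bbmu_t\|^2$ term in the primal gradient bound, where $M=M_t$ itself depends on $\|\bbmu_t\|$ through Theorem \ref{thm:bound_memory_order}, so the model-order bound and the dual-stability bound have to be established in tandem.
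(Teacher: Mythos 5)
Your plan for part (i) follows the paper's route closely: start from Lemma \ref{lemma:inst_lagrang_diff}, insert the gradient bounds of Lemma \ref{lemma:bound_primal_dual_grad}, absorb the $\|\bbmu_t\|^2$ terms by choosing $\delta$ with $8VX^2L_h^2E+2E\delta^2\eta^2-\delta\le 0$, telescope, take $\bbf=\bbf_\nu^\star$ so that $\ip{\bbmu_t,\bbG(\bbf_\nu^\star)+\nu\one}\le 0$ can be dropped, and invoke Lemma \ref{thm:bound_gap} to pay the $\ccalO(\nu)$ price for the tightening. One caution: the quantity multiplying $\|\bbmu_t\|^2$ in the gradient bounds is the edge count $E$, not the model order, so the circularity you fear between $\|\bbmu_t\|$ and Theorem \ref{thm:bound_memory_order} does not arise; moreover no separate proof that $\sup_t\mbE\|\bbmu_t\|=\ccalO(1)$ is needed, because the $+\tfrac{\delta\eta}{2}\|\bbmu_t\|^2$ carried on the left of the decrement inequality cancels the quadratic growth directly.

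The genuine gap is in part (ii). Maximizing the quadratic $\ip{\bbmu,\tfrac1T\sum_t(\bbG(\bbf_t)+\nu\one)}-\tfrac12(\delta\eta+\tfrac{1}{\eta T})\|\bbmu\|^2$ over $\bbmu\ge 0$, as you propose, yields $\big[\tfrac1T\sum_t\mbE[g_{ij}(\bbf_t)]+\nu\big]_+^2\le\ccalO(T^{-1/2})\cdot\big(\ccalO(T^{-1/2})+K_3\big)$, where $K_3$ is the \emph{constant} (not $\ccalO(\nu)$) lower bound on $S(\bbf_t)-S(\bbf_\nu^\star)$. This only gives $\tfrac1T\sum_t\mbE[g_{ij}(\bbf_t)]\le -\nu+\ccalO(T^{-1/4})$, so forcing the violation to zero would require $\nu=\Omega(T^{-1/4})$, which through Lemma \ref{thm:bound_gap} degrades part (i) to $\ccalO(T^{-1/4})$ --- exactly the old trade-off the theorem is designed to beat. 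The paper instead extracts the constraint violation \emph{linearly}: it rewrites the telescoped bound in terms of the standard Lagrangian $\ccalL^s$, uses the saddle-point property of $(\bbf_\nu^\star,\bbmu_\nu^\star)$, and evaluates the free multiplier at the fixed test point $\bbmu=\mathbf{1}_i+\bbmu_\nu^\star$, whose norm is controlled by Slater's condition through the bound $\langle\bbmu_\nu^\star,\one\rangle\le 4VR_{\ccalB}(CX+\lambda R_{\ccalB})/\xi$ established inside the proof of Lemma \ref{thm:bound_gap}. That yields $\sum_t\mbE[G_i(\bbf_t)+\nu]\le\Gamma\sqrt{T}+\ccalO(\alpha T)$ directly, so $\nu=\zeta T^{-1/2}+\Lambda\alpha$ with the stated $\zeta,\Lambda$ suffices. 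Without this device your argument cannot reach the stated conclusion.
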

%

The full proof of Thm. \ref{thm:convergence} is provided in Appendix \ref{app:proof_of_theorem}. Thm. \ref{thm:convergence} can be viewed an explicit non-asymptotic characterization of the optimality gap as a function of the iterations $T$ and the model complexity parameter $\alpha$. Recall from Thm. \ref{thm:bound_memory_order} that if $M$ is fixed a priori, it amounts to fixing $\alpha = \ccalO(M^{-1/2p})$. In other words, a fixed model order $M$ translates to a floor of $\ccalO(M^{-1/2p})$ on the optimality gap. 

In the extreme case when the model order is allowed to grow indefinitely, the optimality gap does decay as $\ccalO(T^{-1/2})$ while the constraint violation is zero. The rate provided here therefore improves upon the existing results in \cite{mahdavi2012trading,koppel2017proximity,madavan2019subgradient}, where the optimality gap is shown to decay as $\ccalO(T^{-1/4})$. Further, the rate provided here matches the best possible rate achievable for unconstrained general convex optimization problems. The specific approach responsible for allowing us to achieve zero constraint violation can be intuitively explained as follows: instead of the original problem, we actually solve a $\nu$-tightened problem with a smaller constraint set. As long as the original problem is strongly feasible and we set $\nu$ appropriately, such a tightening only leads to $\ccalO(T^{-1/2})$ suboptimality, so that the overall optimality gap only changes by a constant factor. 

It is instructive to express the results in Thm. \ref{thm:convergence} in terms of the iteration and model complexity parameters. Suppose that the goal is to achieve an desired optimality gap of $\varepsilon$. The following corollary, whose proof is provided in Appendix \ref{proof_iter_comp} of the supplementary material, characterizes the iteration and storage complexities of the proposed algorithm. 

{\begin{corollary}\label{thm:iter_comp}
For step-size $\eta=T^{-1/2}$ and dual regularizer $\nu=\zeta T^{-1/2} + \Lambda \alpha$, under Assumptions \ref{as:first}-\ref{as:fifth} hold, the optimality gap to be less than $\varepsilon$, i.e., $\frac{1}{T}\sum_{t=1}^T\mbE[S(\bbf_t)-S(\bbf^\star)]\leq \varepsilon $ requires  $\ccalO(1/\varepsilon^2)$ number of iterations and the model complexity bound of $\ccalO(1/\varepsilon^{2p})$ with the parameter $\alpha\leq \frac{\varepsilon}{2(4VR_{\ccalB} + Q \Lambda)}$, where  $Q=4VR_{\ccalB}(CX+\lambda R_{\ccalB})/\xi$.
\end{corollary}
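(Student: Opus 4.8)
The plan is to obtain the corollary as a direct, essentially algebraic, consequence of the non-asymptotic rate in Thm.~\ref{thm:convergence}(i) together with the model-order bound in Thm.~\ref{thm:bound_memory_order}, once the constants hidden in the $\ccalO(T^{-1/2}+\alpha)$ rate are made explicit. First I would revisit the telescoped bound produced in the proof of Thm.~\ref{thm:convergence}: substituting $\eta=T^{-1/2}$ and $\nu=\zeta T^{-1/2}+\Lambda\alpha$, and combining it with the suboptimality-transfer estimate $S(\bbf_\nu^\star)-S(\bbf^\star)\le Q\nu$ (which follows from Slater's condition, Assumption~\ref{as:fourth}, with $Q=4VR_{\ccalB}(CX+\lambda R_{\ccalB})/\xi$), should yield a bound of the form $\tfrac1T\sum_{t=1}^T\mbE[S(\bbf_t)-S(\bbf^\star)]\le C_0T^{-1/2}+(4VR_{\ccalB}+Q\Lambda)\alpha$, where $C_0$ collects only the fixed problem and design constants ($\zeta,\Lambda,\delta,K,R_{\ccalB},\ldots$) and is independent of $T$ and $\alpha$. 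The coefficient $4VR_{\ccalB}$ of $\alpha$ should trace back to the KOMP projection error $\eps=\alpha\eta$ entering through the term $\tfrac{\sqrt V\eps}{\eta}\|\bbf_t-\bbf^\star\|_{\ccalH}\le 2VR_{\ccalB}\alpha$, and the $Q\Lambda\alpha$ piece to the $\Lambda\alpha$ component of the conservative margin $\nu$ propagated through the transfer estimate.

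Next I would split the accuracy budget: it suffices to force $C_0T^{-1/2}\le\varepsilon/2$ and $(4VR_{\ccalB}+Q\Lambda)\alpha\le\varepsilon/2$ separately. The first gives $T\ge 4C_0^2/\varepsilon^2$, i.e.\ $\ccalO(1/\varepsilon^2)$ iterations; the second gives precisely the stated condition $\alpha\le\varepsilon/[2(4VR_{\ccalB}+Q\Lambda)]$. Choosing $\alpha$ equal to this upper bound (so $\alpha=\Theta(\varepsilon)$) and plugging into Thm.~\ref{thm:bound_memory_order}, namely $M_{i,t}\le\beta(R_{M_{i,t}}/\alpha)^{2p}$, would give $M_{i,t}=\ccalO(\varepsilon^{-2p})$ for each agent, and summing over the $V$ agents yields $M_t=\sum_i M_{i,t}=\ccalO(1/\varepsilon^{2p})$.

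The step I expect to require the most care is justifying that $R_{M_{i,t}}=C+L_hER_{i,t}$ with $R_{i,t}=\max_{j\in n_i}|\mu_{ij,t}|$ can be treated as a constant, so that $M_{i,t}\le\beta(R_{M_{i,t}}/\alpha)^{2p}$ becomes a genuine $\ccalO(\alpha^{-2p})$ bound uniform in $t$ and $T$. For this I would invoke the uniform boundedness of the dual iterates: the dual regularization term $-\tfrac{\delta\eta}{2}\|\bbmu\|^2$ in \eqref{eq:lagrangian_dist} makes the dual ascent recursion \eqref{eq:dualupdate_edge} a stable iteration that confines the multipliers to a bounded region, and, together with Assumption~\ref{as:fourth} and the calibration of $\nu$, this is exactly the mechanism already exploited in the proof of Thm.~\ref{thm:convergence}; the task here is just to extract from that argument a uniform bound $R_{i,t}\le\bar\mu$ and substitute it into $R_{M_{i,t}}$. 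Everything else — the splitting of $\varepsilon/2$, inverting $T^{-1/2}\le\varepsilon/2$, and the power-law substitution $\alpha=\Theta(\varepsilon)$ into the model-order bound — is routine, so the real content of the proof is Step~1 (pinning down the $\alpha$-coefficient $4VR_{\ccalB}+Q\Lambda$ exactly) plus this dual-boundedness input.
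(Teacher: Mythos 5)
Your proposal follows the paper's own proof essentially step for step: the paper likewise starts from the explicit averaged bound $\frac{1}{T}\sum_{t=1}^T\mbE[S(\bbf_t)-S(\bbf^\star)]\le \frac{1}{\sqrt{T}}\bigl(\frac{\|\bbf_\nu^\star\|_{\ccalH}^2}{2}+\frac{K}{2}+Q\zeta\bigr)+\alpha(4VR_{\ccalB}+Q\Lambda)$ (obtained by substituting $\eta=T^{-1/2}$, $\nu=\zeta T^{-1/2}+\Lambda\alpha$, and $\eps\le 2R_{\ccalB}$ into the bound from the proof of Thm.~\ref{thm:convergence}), splits the accuracy budget into the same two $\varepsilon/2$ conditions, and plugs the resulting $\alpha=\Theta(\varepsilon)$ into Thm.~\ref{thm:bound_memory_order}. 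Your additional caveat about needing a uniform bound on $R_{M_{i,t}}$ through the dual iterates is a legitimate point of care that the paper's own proof leaves implicit, but it does not alter the argument.
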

As expected, the iteration complexity matches the state-of-the-art first order oracle complexity of SGD for the unconstrained convex setting. In addition, the model order complexity bound characterizes the corresponding storage requirements.}

\section{Numerical Results}\label{sec:simul}
In this section, we apply the proposed algorithm to solve a spatial temporal random field estimation problem and an another problem of inferring from oceanographic data.
\begin{figure*}[t]
\begin{subfigure}[b]{0.33\linewidth}
\centering
\includegraphics[width=\linewidth,height=0.64\linewidth]{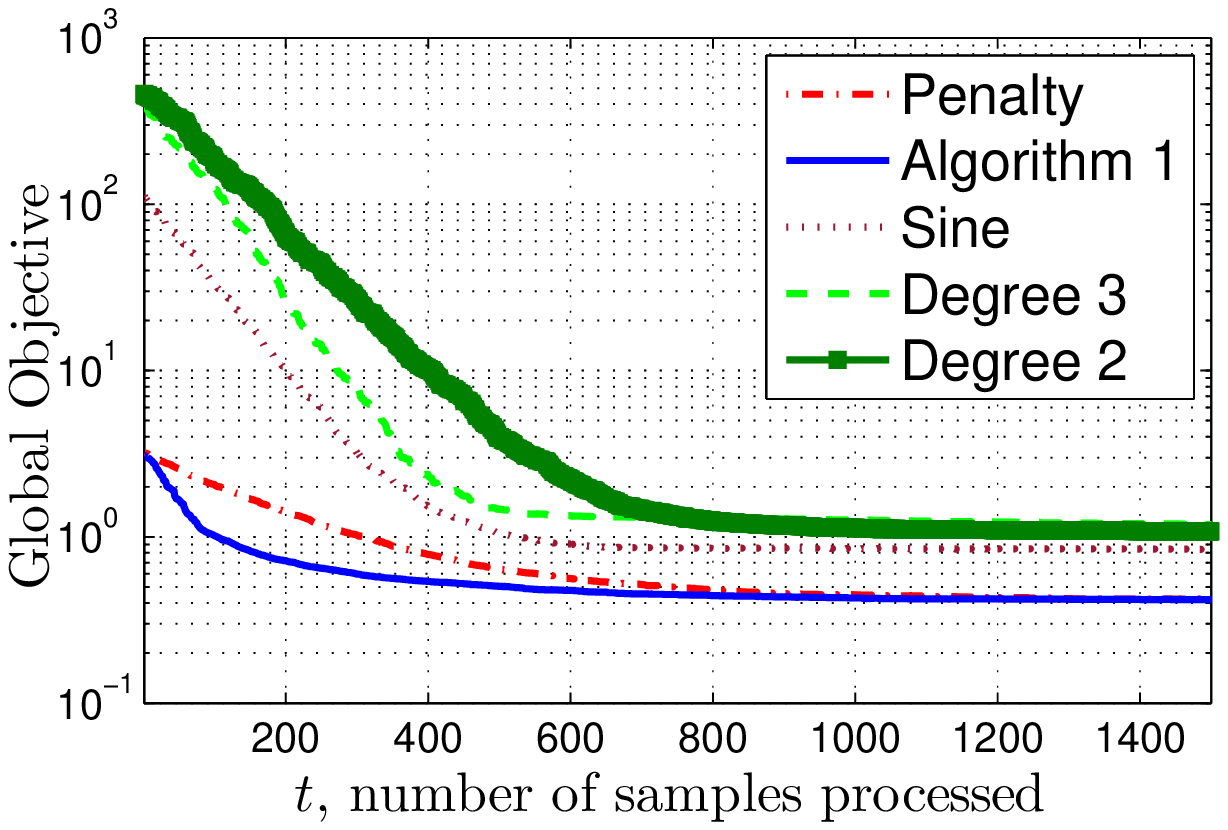}
\vspace*{-5mm}
\caption{\footnotesize{Global loss function vs. Samples}}
\label{fig:globalloss}
\end{subfigure}
\begin{subfigure}[b]{0.33\linewidth}
\centering
\includegraphics[width=\linewidth,height=0.6\linewidth]
{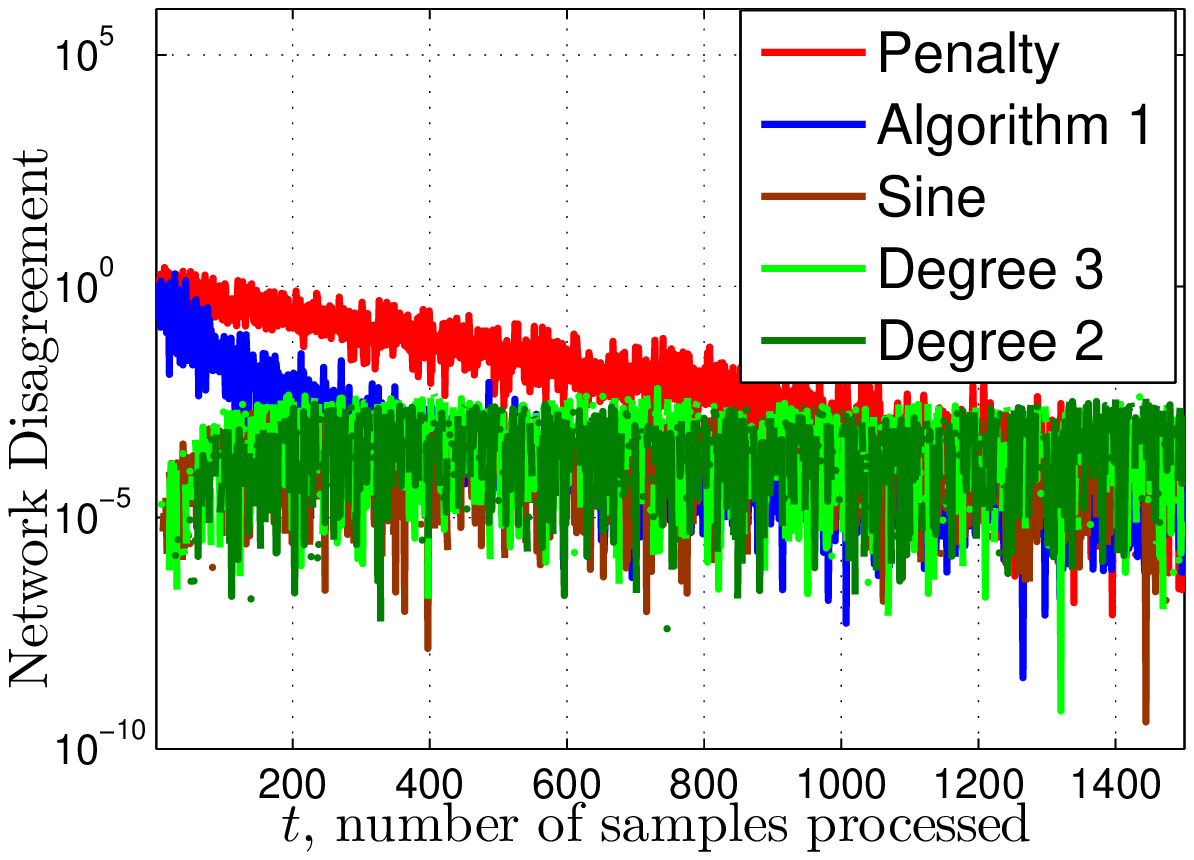}
\caption{\footnotesize{Network Disagreement vs. Samples}}
\label{fig:networkdisagreement}
\end{subfigure}
\begin{subfigure}[b]{0.33\linewidth}
\centering
\includegraphics[width=\linewidth,height=0.64\linewidth]
{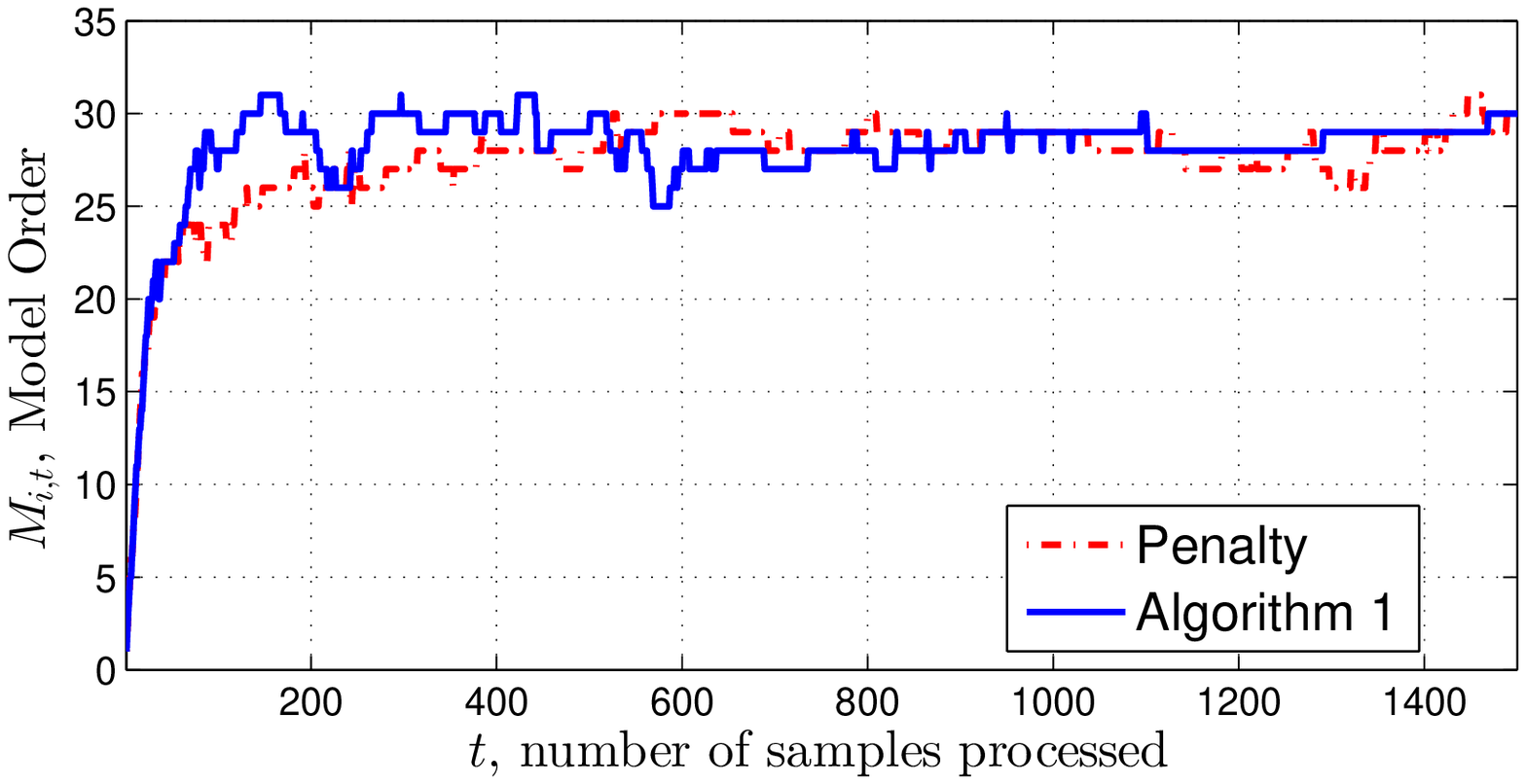}
\caption{\footnotesize{Model Order $\bbM_{i,t}$ vs. Samples}}
\label{fig:modelorder}
\end{subfigure}
\vspace*{-0.4cm}
\caption{\footnotesize{Convergence in terms of primal sub-optimality, constraint violation, and model complexity, for estimating a spatio-temporal correlated random field  with parsimony constant $P=8$, Gaussian kernel with bandwidth $0.05$, $\lambda=\delta=10^{-5}$, $\eta=0.01$ and penalty coefficient set at $0.08$. {Penalty method refers to an approximate constraint satisfaction approach to kernelized consensus optimization as in \cite{koppel2017decentralized}, whereas sine, degree 2, and degree 3 refer to linear statistical models over a fixed basis of sinusoids, or 2nd/3rd degree polynomials, to which primal-dual method is applied, as in \cite{koppel2017proximity}.} \vspace*{-0.4cm}}}
\label{fig:model_verify}
\end{figure*}
\vspace{-0.3cm}
\subsection{Spatio-temporal Random Field Estimation}\label{field_est}
The estimation of a spatio-temporal field using a set of sensors spread across a region with required level of accuracy is an central challenge in wireless sensor networks (WSNs) \cite{kozick2004source}. We model this problem by considering the problem of estimating a temporally varying spatial planar correlated Gaussian random field in a given region $\ccalG\subset\mbR^2$ space.  A spatial temporal random field is a random function of the spatial components  $u$ (for \emph{x}-axis) and $z$ (for \emph{y}-axis) across a region $\ccalG$ and time. Moreover, the random field is parameterized by its correlation matrix $\bbR_s$, which depends on the location of the sensors. Each element of $[\bbR_s]_{ij}$ is assumed to have a structure of the form $\Omega(l_i,l_j)=e^{-\|l_i-l_j\|}$, where $l_i$ and $l_j$ are the respective locations of sensor $i$ and $j$ in region $\ccalG$ \cite{kozick2004source}.  From this correlation, note that the nodes close to each other have high correlation whereas nodes located far away are less correlated, meaning that observations collected from the nearby nodes are  more relevant than observations from distant nodes.

We experimented with a sensor network with $V=40$ nodes spatially distributed in a $100\times 100$ meter square area. Each node $i$ collects the observation $y_{i,t}$ at time instant $t$. In the collected data, $y_{i,t}$ denotes the noisy version of the original field $s_{i,t}$ at node $i$ for time instant $t$.  The  observation model is given by $y_{i,t}=s_{i,t}+n_{i,t}$, where $n_{i,t}\sim\ccalN(0,0.5)$ is i.i.d. Each node seeks to sequentially minimize its local loss, i.e.,
\begin{align}\label{eq:loss_huber}
		\ell(\hat{s}_{i,t},y_{i,t})= \begin{cases}
		\frac{1}{2}(y_{i,t}-\hat{s}_{i,t})^2 &\text{for $|y_{i,t}-\hat{s}_{i,t}|\leq\phi $}\\
		\phi|y_{i,t}-\hat{s}_{i,t}|- \frac{1}{2}\phi^2&\text{otherwise}
	\end{cases}
\end{align}
 where $\hat{s}_{i,t}$ is the estimated value of actual field $s_{i,t}$ and $\phi$ is the tuning parameter. In the experiments, we choose $\delta=10^4$ to ensure that the local loss function is Lipschitz and the gradient is bounded by $2\delta$.  The instantaneous observation $\bbs_{t}$ across the network is given by $\bbs_{t}=\bbpi + \bbC^T\big(\mathbf{1}\sin(\omega t) + \bbv_t \big)$, where $\mathbf{1}$ is a vector of ones of length $V$, $\sin(\omega t)$ is a sinusoidal  with angular frequency $\omega=2$, $\bbpi=\{1/V,2/V,\dots,1\}$ is a fixed mean vector of length $V$, $\bbC$ is the Cholesky factorization of the correlation $\bbR_s$, and $\bbv_t\sim\ccalN(\mathbf{0},0.1\mathbf{I})$, where $I$ denotes $40\times40$ identity matrix. We select tolerance parameter to be $\gamma_{ij}=\Omega(l_i,l_j)$. 
We solve the problem \eqref{eq:main_prob} of minimizing the regularized loss function over $f_i$ where we learn the function $f_i(t)$, which is the function approximation to the actual field, $s_{i,t}$, i.e., we solve an online decentralized regression (curve fitting) problem.

 We run Algorithm \ref{alg:soldd} to generate local functions $f_i(t)$ to track ground-truth $y_{i,t}$. We select parameters: parsimony constant $P=8$ [Thm.  \ref{thm:convergence}], Gaussian kernel with bandwidth $\sigma=0.05$ such that we capture the variation of sinusoidal function with angular frequency of $2$, and primal/dual regularizers $\lambda=10^{-5}$ and $\delta=10^{-5}$. We run the algorithm for $1500$ iterations with step-size $\eta=0.01$. 
For comparison purposes, we consider two other comparable techniques: consensus with kernels via \textit{penalty method} \cite{koppel2017decentralized} and {the simplification of \eqref{eq:main_prob} to linear statistical models, i.e., $f_i(\bbx_i) = \bbw_i^T\phi(\bbx)$ for some $d$ fixed-dimensional parameter vectors $\{\bbw_i\}_{i=1}^N$,} which we call \textit{linear method} \cite{koppel2017proximity}. {For comparison with the penalty method, the penalty coefficient is $0.08$ which is tuned for the best performance, with all other parameters held fixed to Algorithm \ref{alg:soldd}.} For linear method, we consider three parametric models (which imply a different structural form for the feature map $\phi(\bbx)$): (a) Quadratic polynomial; (b) Cubic polynomial and (c) Sine polynomial (i.e., of the form $at+b\text{sin}(\omega t)$ where $a$ and $b$ are the model parameters and $\omega=1$ is the angular frequency). 
  \begin{figure*}[t]
  \begin{subfigure}[b]{0.33\linewidth}
  \centering
  \includegraphics[width=\linewidth,height=0.6\linewidth]{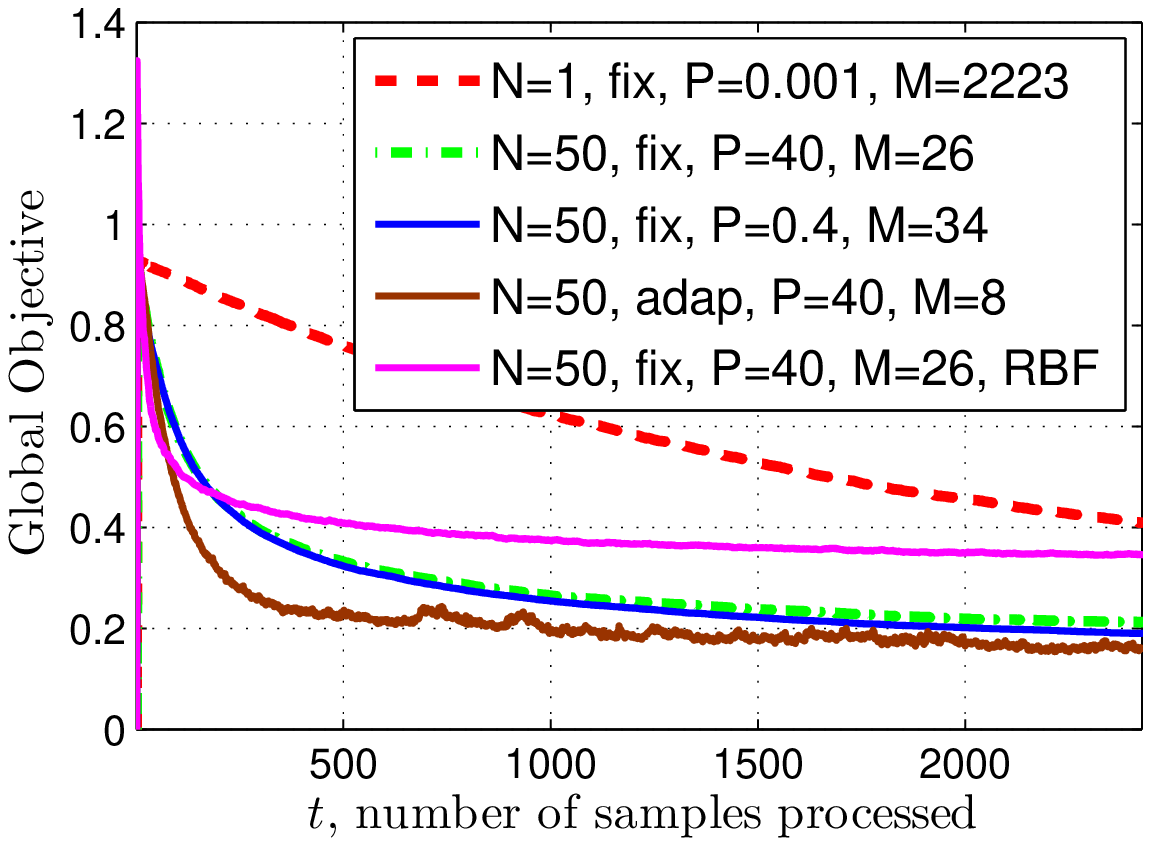}
  \caption{\footnotesize{Global loss function vs. Samples}}
  \label{fig:globalloss_temp_ocean}
  \end{subfigure}
  \begin{subfigure}[b]{0.33\linewidth}
  \centering
  \includegraphics[width=\linewidth,height=0.6\linewidth]
  {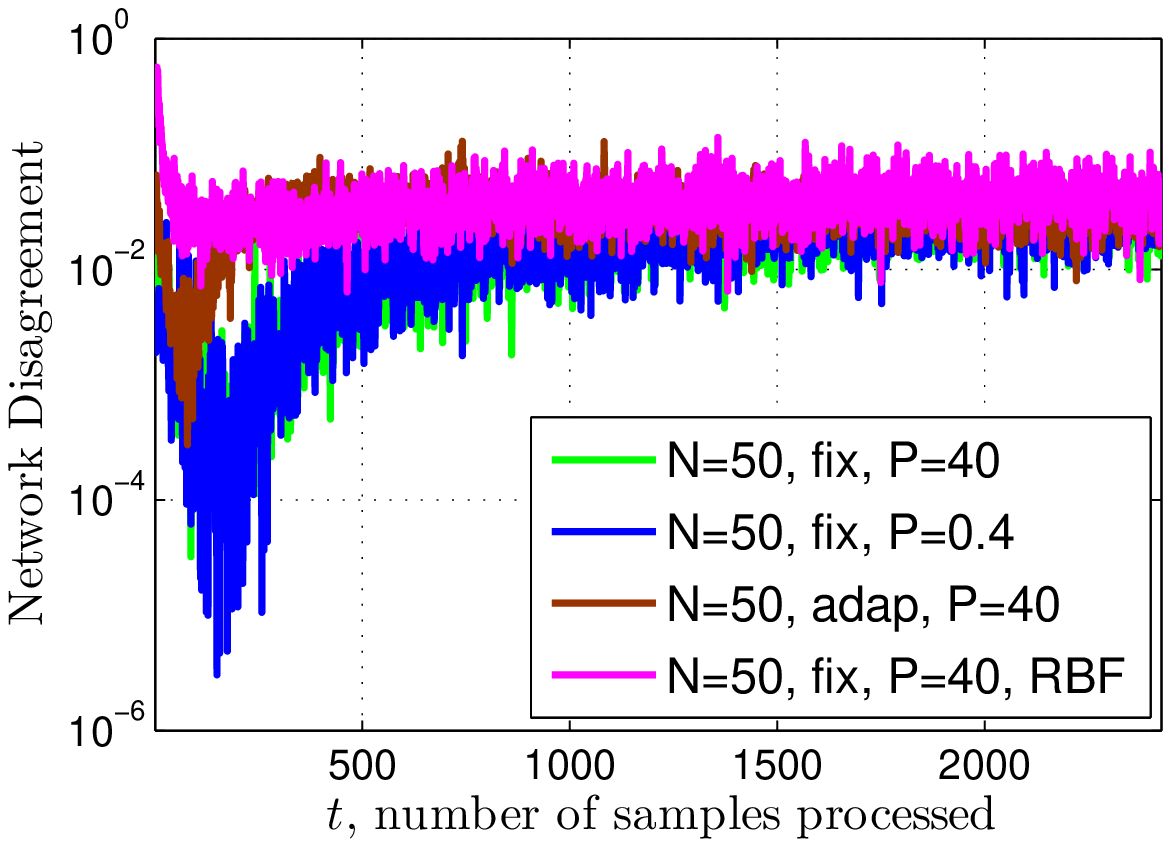}
  \caption{\footnotesize{Network Disagreement vs. Samples}}
  \label{fig:networkdisagreement_temp_ocean}
  \end{subfigure}
  \begin{subfigure}[b]{0.33\linewidth}
  \centering
  \includegraphics[width=\linewidth,height=0.6\linewidth]
  {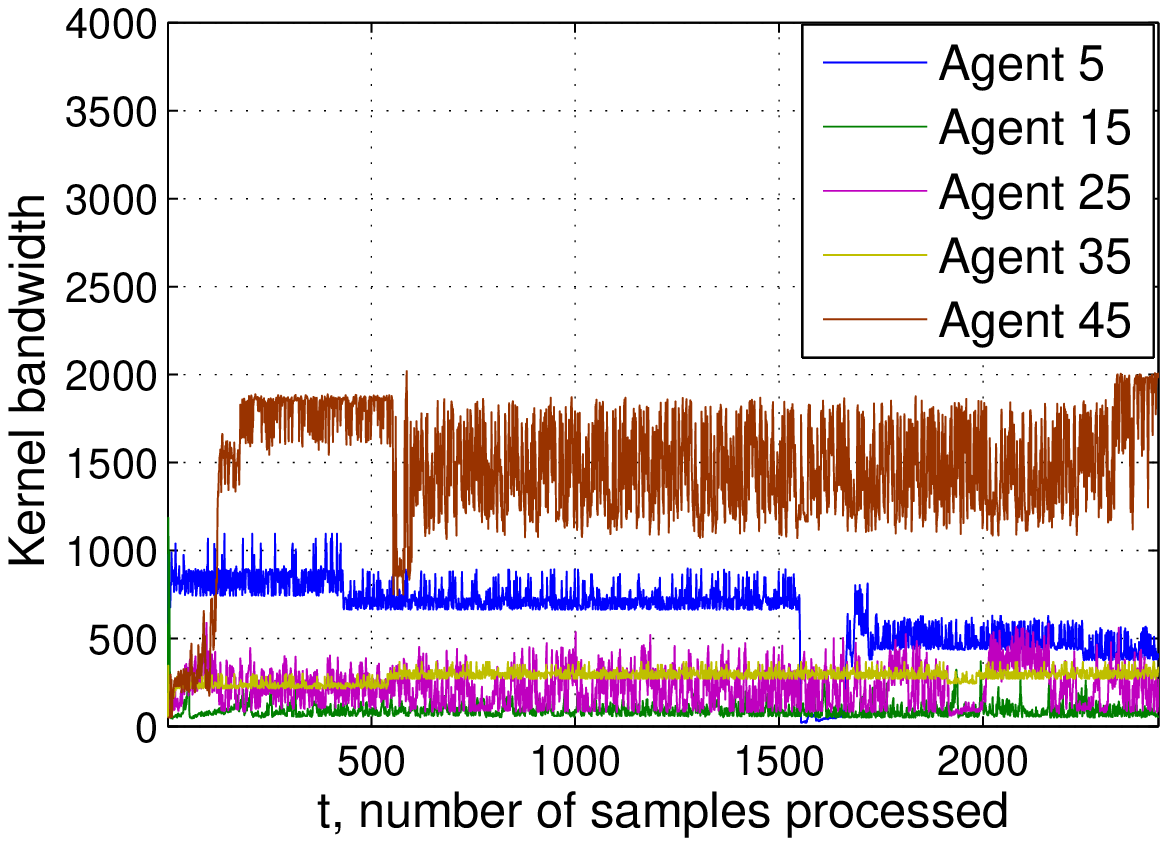}
  \caption{\footnotesize{Gaussian kernel bandwidth vs. Samples}}
  \label{fig:temp_agent_bw}
  \end{subfigure}
  \vspace*{-0.4cm}
  \caption{\footnotesize{Convergence in terms of primal sub-optimality and constraint violation, for temperature field of Gulf of Mexico with  $\lambda=\delta=10^{-5}$, and $\eta=0.01$. {The centralized solver pools all information at a single location and applies projected stochastic gradient method \cite{POLK}, whereas $P=40$ and $P=0.4$ are two different parameter selections for the parsimony constant in Algorithm \ref{alg:soldd}.}} Algorithm \ref{alg:soldd} is implemented with fixed  bandwidth $50$ and also variable kernel bandwidth for each agent, plotted in $(c)$ for five agents.  Algorithm \ref{alg:soldd} and its generalization attain the a favorable tradeoff of sub-optimality and feasibility. {$M$ denotes the model complexity of the algorithm, i.e., the number of dictionary elements (columns) in the kernel representation (matrix).}}
  \label{fig:model_verify_temp_ocean}
  \end{figure*}

 Fig. \ref{fig:model_verify} displays the results of this comparison. In particular, Fig. \ref{fig:globalloss} compares the global network wide averaged loss for three different methods, which shows that the linear method is unable to effectively track the target variable the model behavior. The linear sine polynomial model (denoted as ``Sine'') is closer to the target than the quadratic (denoted as ``Degree 2'') or cubic polynomial (denoted as ``Degree 3''). Algorithm \ref{alg:soldd} attains superior performance to the RKHS-based penalty method \cite{koppel2017decentralized}. Moreofer, Fig. \ref{fig:networkdisagreement} demonstrates that Algorithm \ref{alg:soldd} achieves tighter constraint satisfaction relative to penalty method, and is comparable to primal-dual schemes for linear models. Doing so allows nodes estimates tune their closeness to neighbors through proximity tolerances $\gamma_{ij}$. 
 
 Fig. \ref{fig:modelorder} plots the model order for primal-dual method and penalty method, which omits linear method plots because its a parametric method with fixed complexity equal to the parameter dimension $d$.
Early on, primal-dual method (being an exact method) has higher complexity than penalty method. In steady state, Algorithm \ref{alg:soldd} and penalty  method have comparable complexity. With a similar model complexity, we attain near-exact constraint satisfaction via primal-dual method as compared to penalty method. Overall, the model complexity of $30$ is orders of magnitude smaller than sample size  $1500$.


  \begin{figure*}[t]
  \begin{subfigure}[b]{0.33\linewidth}
  \centering
  \includegraphics[width=\linewidth,height=0.6\linewidth]{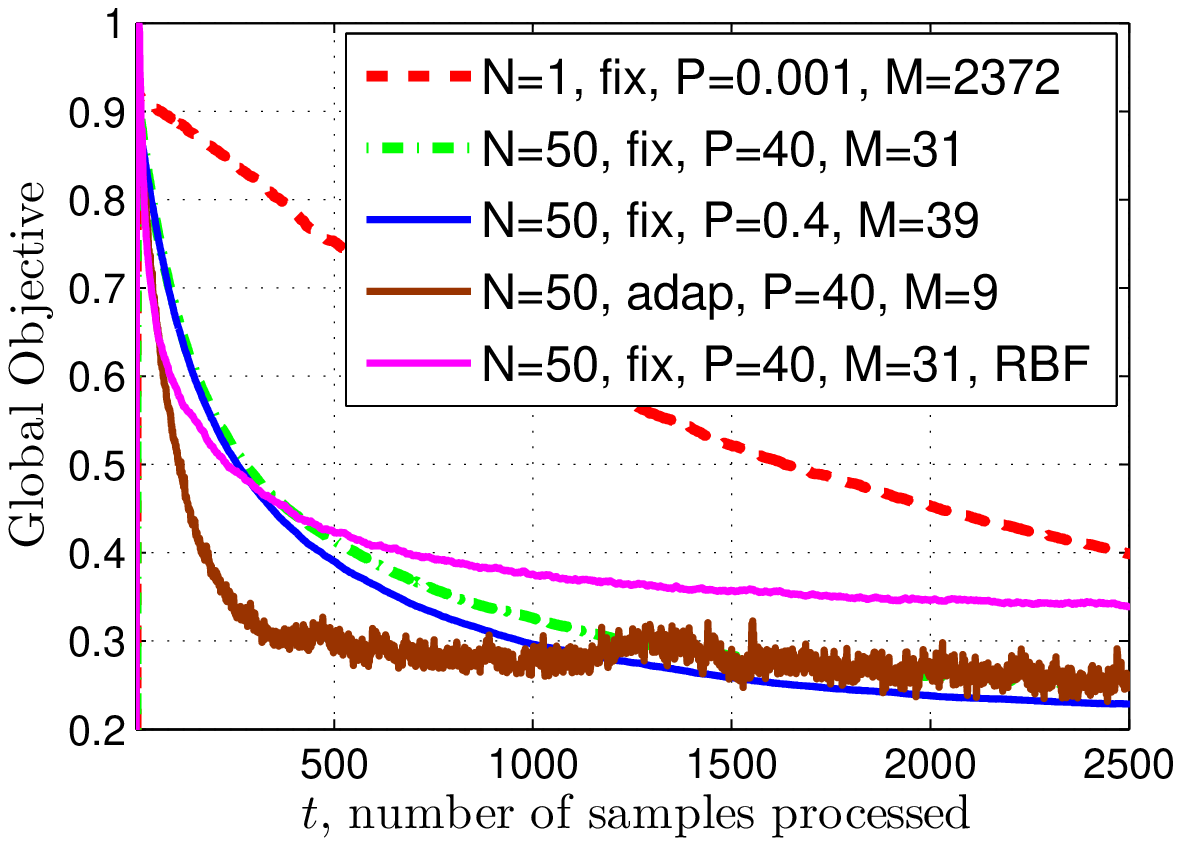}
  \caption{\footnotesize{Global loss function vs. Samples}}
  \label{fig:globalloss_salin_ocean}
  \end{subfigure}
  \begin{subfigure}[b]{0.33\linewidth}
  \centering
  \includegraphics[width=\linewidth,height=0.6\linewidth]
  {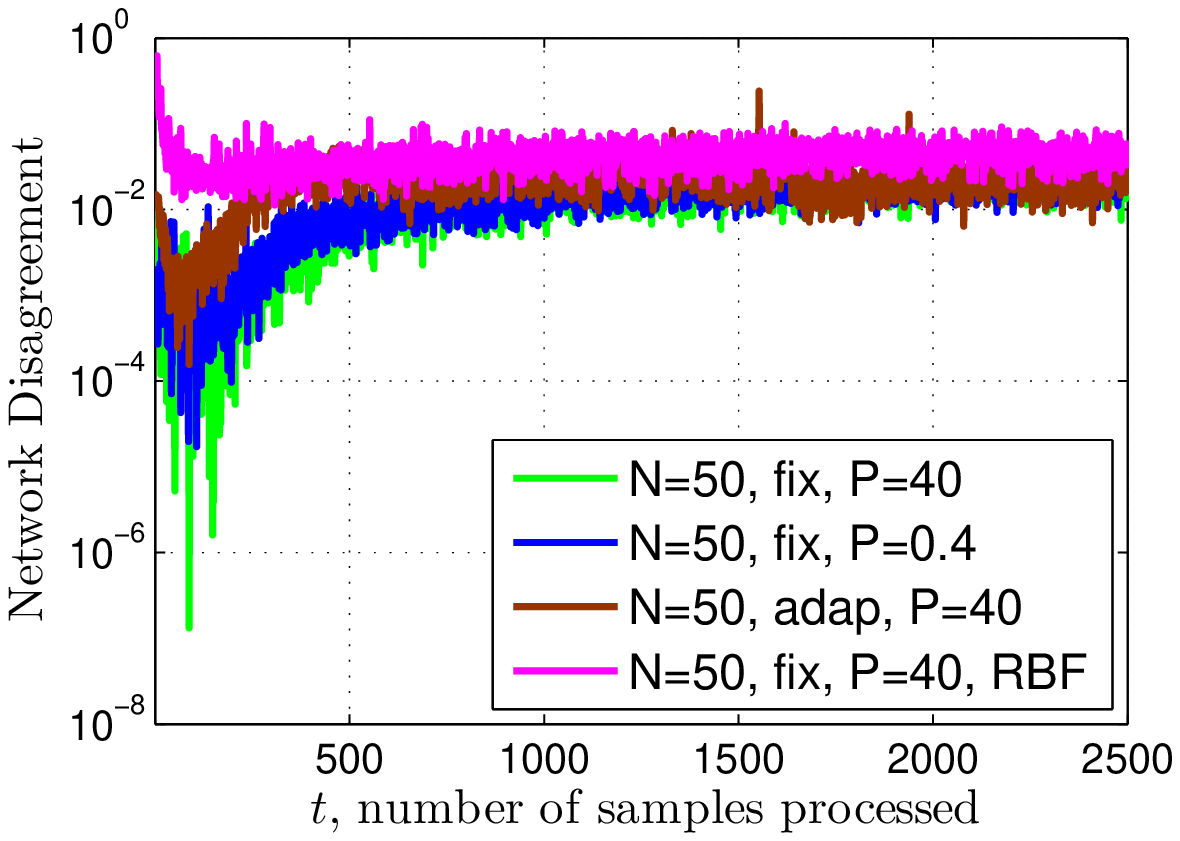}
  \caption{\footnotesize{Network Disagreement vs. Samples}}
  \label{fig:networkdisagreement_salin_ocean}
  \end{subfigure}
  \begin{subfigure}[b]{0.33\linewidth}
  \centering
  \includegraphics[width=\linewidth,height=0.6\linewidth]
  {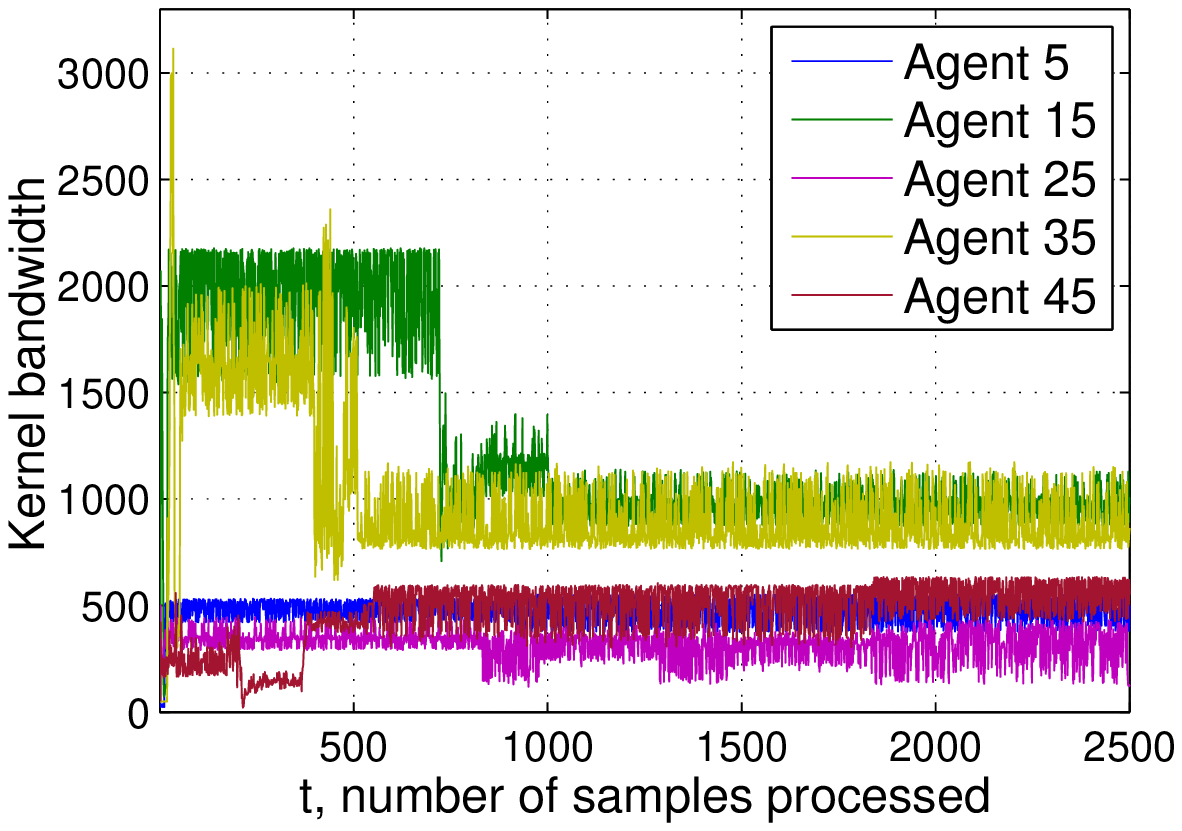}
  \caption{\footnotesize{Gaussian kernel bandwidth vs. Samples}}
  \label{fig:salinity_agent_bw}
  \end{subfigure}
  \vspace*{-0.4cm}
  \caption{\footnotesize{Convergence in terms of primal sub-optimality and constraint violation for salinity field of Gulf of Mexico with  $\lambda=\delta=10^{-5}$, and $\eta=0.01$. {The centralized solver pools all information at a single location and applying projected stochastic gradient method \cite{POLK}, whereas $P=40$ and $P=0.4$ are two different parameter selections for the parsimony constant in Algorithm \ref{alg:soldd}.} Algorithm \ref{alg:soldd} is run a Gaussian kernel with bandwidth $50$ and also adaptive bandwidth for each agent, plotted in $(c)$ for five agents. Algorithm \ref{alg:soldd} and its generalization most effectively trade off model fitness and constraint violation. {$M$ denotes the model order complexity of the algorithm.}}\vspace{-5mm}}
  \label{fig:model_verify_salin_ocean}
  \end{figure*}
 \vspace{-2mm} 
\subsection{Inferring Oceanographic Data}\label{sec:ocean}
Wireless sensor networks may also be used to monitor various environmental parameters, especially in oceanic settings. To this end, we associate each node in the network to an oceanic buoy tasked with estimating salinity and temperature when deployed at {standard} depths. Decentralization is advantageous here due to the fact that server stations are impractical at sea, and centralization may exceed the cost of computation per node \cite{kozick2004source}. Thus, we run Algorithm \ref{alg:soldd} on the World Oceanic Database \cite{boyer}, obtained from multiple underwater sensors in the Gulf of Mexico. In this Regional Climatology data set, temperature, salinity, oxygen, phosphate, silicate, and nitrate are recorded at various depth levels.  

We restrict focus to temperature and salinity parameters at different locations with varying depths, during the winter time-period. The readings of the climatological fields are obtained for a particular latitude and longitude at standard depths starting from $0$ meters to $5000$ meters. The latitude and longitude specifies the node (sensor) location. Similar readings are obtained for various locations spanning the water body. The experiment is carried out considering $50$ nodes, where edges are determined by measuring the distance between two nodes, and drawing an edge to a particular node if its distance is less than $1000$ kilometers away. The proximity parameter $\gamma_{ij}$ is obtained by evaluating $\exp(-\text{dist}(i,j)/1000)$, where $\text{dist}(i,j)$ denotes distance nodes in kilometers. 

 We use Algorithm \ref{alg:soldd} to estimate the value of climatological field $y_i$ at a depth $d_i$ such as salinity or temperature at each node $i$. We solve problem \eqref{eq:main_prob} by minimizing the regularized quadratic loss between estimated climatological field and observed climatological field $y_i$ over function $f_i$.  A key benefit of doing so is the ability to interpolate missing values, which arise due to, e.g., limited battery or bandwidth.
 
 {
 {\noindent \bf Bandwidth Adaptation}
We consider an extended implementation of Algorithm \ref{alg:soldd} for the purpose of experimentation, where kernel hyper-parameters may be selected adaptively via online maximum likelihood, rather than fixed a priori \cite{singh2011information}. The motivation is that in this oceanic setting, spatial fields exhibit scale heterogeneity that can be exploited for boosting accuracy.
%
%
In the decentralized online setting, we run a bandwidth at step $(t+1)$ for the Gaussian kernel of agent $i$ as:\vspace{-1mm}
%
\begin{align}\label{eq:var_bw}
\!\sigma_{i,t+1}\!\!=\!\!\sqrt{\!\frac{1}{M_{i,t}\!+\!1}\!\!\!\sum_{l=1}^{M_{i,t}+1}\!\frac{\sum_{k=1,k\neq l}^{M_{i,t}+1} \exp\!\Big(\!\!-\!\frac{(\bbd_l-\bbd_k)^2}{2\sigma_{i,t}^2}\!\Big)(\bbd_l\!-\!\bbd_k)^2}{\sum_{k=1,k\neq l}^{M_{i,t}+1} \exp\!\Big(\!-\!\frac{(\bbd_l-\bbd_k)^2}{2\sigma_{i,t}^2}\Big)}}
\end{align} 
 where $\bbd_l$ and $\bbd_k$ are the dictionary elements of $\tbD_{i,t+1}$ [cf \eqref{eq:param_tilde}]. The utility of this adaptive bandwidth selection is investigated next. For better interpretability, we denote the centralized method with fixed kernel bandwidth as ``N=1, fix'' and distributed method with fixed and adaptive bandwidth as ``N=50, fix'' and ``N=50, adap''.}
 
 \subsubsection{Temperature}\label{temperature}
 Here we use Algorithm \ref{alg:soldd} for predicting the statistical mean of the temperature field  of different nodes at varying depths. The real data obtained from the World Oceanic database has statistical mean of the temperature field. 
 %
 We run Algorithm \ref{alg:soldd} for $T=2430$ iterations with constant step-size $\eta=0.01$ and regularizers $\lambda=10^{-5}$, $\delta=10^{-5}$ with a Gaussian kernel with fixed bandwidth parameter $\sigma=50$. The adaptive scheme employs \eqref{eq:var_bw} with the same bandwidth initialization. The parsimony constant is fixed at two values, $P\in\{0.4,40\}$. {The adaptive bandwidth case is studied only for $P=40$. The parsimony constant is set to $P=0.001$ for \cite{POLK} to ensure comparably sized models across cases.}
 
 Fig. \ref{fig:model_verify_temp_ocean} shows the numerical experiment results for the ocean data. Fig. \ref{fig:globalloss_temp_ocean} demonstrates that the prediction error for test cases reduces with increasing samples, and illustrates that centralization is inappropriate here: local spatial variability of the field causes \cite{POLK} to be outperformed by Algorithm \ref{alg:soldd} under both fixed and variable bandwidths. {The variable bandwidth along with appropriately chosen parsimony constant and step size performs best in terms of model fitness.} A similar trend may be gleaned from the plot of constraint violation over time in Fig. \ref{fig:networkdisagreement_temp_ocean}. {Interestingly, the adaptive bandwidth scheme obtains more accurate model with comparable constraint violation, both of which are superior to centralized approaches, thus substantiating the experimental merits of \eqref{eq:var_bw}. The evolution of a few random agents' bandwidths is visualized in {Fig. \ref{fig:temp_agent_bw}} -- since \eqref{eq:var_bw} is a stochastic fixed point iteration, each agent's bandwidth converges to a neighborhood. }

 \subsubsection{Salinity}\label{salinity}
 
Next we consider Algorithm \ref{alg:soldd} for predicting the mean salinity from various oceanic locations and depths. We set the primal and dual regularizer $\lambda=\delta=10^{-5}$, and run it for $T=2500$ iterations with constant step-size of $0.01$. We select two values of the parsimony constant $P\in\{0.4,40\}$. For the centralized case, we fix $P=0.001$ so that its complexity is comparable to the distributed approach to ensure a fair comparison. The bandwidth of the Gaussian kernel is fixed at $50$ and also considered adaptive for the simulation with initial bandwidth value set at $50$. 
  
We display these results in Fig. \ref{fig:model_verify_salin_ocean}. Fig. \ref{fig:globalloss_salin_ocean} demonstrates that learning a single function to fit all data is unable to filter out correlation effects and hence gives poor model fitness, as compared to fitting multiple $f_i$'s to different nodes considering proximity constraints. Moreover, the average model order for a single node for the distributed case is $39$ for $P=0.4$, thus giving an aggregate complexity of $1950$ for $50$ nodes. This is less than the centralized model order of $2372$ for a single function. {Thus the distributed approach yields improved accuracy with reduced complexity.}
 We note that increasing the parsimony constant results in worse model fit but saves complexity, yielding a tunable tradeoff between fitness and complexity. Similar to the temperature data, in Fig. \ref{fig:globalloss_salin_ocean}  also we observe  improvement in performance for adaptive bandwidth case with smaller average model order. In Fig. \ref{fig:networkdisagreement_salin_ocean} we may observe that Algorithm \ref{alg:soldd} incurs attenuating constraint violation   for both fixed and adaptive bandwidth case. Overall, complexity settles to around $39$, which is orders of magnitude smaller than the $2500$ sample size.


{\subsubsection{Radial basis function (RBF) network}\label{sec:rbf}

To evaluate the merit of the proposed decentralized projection scheme, we considered a variant of our approach which employs a fixed feature representation, i.e., linear regression over a nonlinear RBF basis whose points are selected according to a uniform grid across the feature space. Doing so may be interpreted as a compatible comparison to \cite{koppel2017proximity}. We call this scheme RBF, as it is a multi-agent proximity-constrained problem for learning the coefficients of a one-layer RBF network. We implement this scheme on the real ocean data set considered in the paper with the same parameters as considered for HALK.

The fixed model order for a single agent for the RBF case is fixed to the final settled model order obtained in case of our paper (for parsimony constant $P=40$), i.e., 26 and 31 for temperature and salinity field estimation. The fixed dictionary points for the RBF case are randomly selected from a uniform distribution.
  It can be observed from both Fig. \ref{fig:globalloss_temp_ocean} and Fig. \ref{fig:globalloss_salin_ocean} that for salinity and temperature field estimation, the RBF based algorithm  performs inferior to our algorithm in the long run. IEarly on, RBF performs better since it has fewer parameters to learn due to its fixed basis, but over time the quality of its fixed basis is exceeded by the dynamically chosen one of HALK. The constraint violation for both the field estimation parameters are presented in Fig. \ref{fig:networkdisagreement_temp_ocean} and Fig. \ref{fig:networkdisagreement_salin_ocean}.   
  The important observation here is that our proposed algorithm discerns the number of dictionary points (model order) dynamically and it evolves with the system. However, in case of RBF based algorithm, we require to apriori fix the size of the RBF network for the algorithm to initialize and iterate, which in many cases is difficult to fix in advance.}

\vspace{-2mm}
\section{Conclusion} \label{sec:conclusion}
We proposed learning in heterogeneous networks via constrained functional stochastic programming. We modeled heterogeneity using proximity constraints, which allowed agents to make decisions which are close but not necessarily equal. Moreover, motivated by their universal function approximation properties, we restricted focus to the case where agents decisions are defined by functions in RKHS.
We formulated the augmented Lagrangian, and proposed a decentralized stochastic saddle point method to solve it.
Since decision variables were functions belonging to RKHS, not vectors, we required generalizing the Representer Theorem to this setting, and further projecting the primal iterates onto subspaces greedily constructed from subsets of past observations. 

The algorithm, Heterogeneous Adaptive Learning with Kernels (HALK), converges in terms of primal sub-optimality and satisfies the constraints {on average}. We further established a controllable trade-off between convergence and complexity. 
We validated HALK for estimating a spatial temporal Gaussian random field in a heterogeneous sensor network, and employed it to predict oceanic temperature and salinity from depth. We also considered a generalization where the kernel bandwidth of each agent's function is allowed to vary, and observed better experimental performance as compared to the fixed bandwidth.
 In future work, we hope to relax communications requirements and allow asynchronous updates \cite{bedi2019asynchronous,bedi2017beyond}, permit the learning rates to be distinct among agents, and obtain tighter dependence of the convergence results on the network data.

%
\vspace{-2mm}
\renewcommand{\thesectiondis}[2]{\Alph{section}:}
\appendices

\section{Proof of Thm. \ref{thm:bound_memory_order}}\label{app:proof_memoryorder}
The proof is motivated from the derivation presented in \cite[Thm. 4]{POLK}  and is presented here for the proposed algorithm. Consider the function iterates $f_{i,t}$ and $f_{i,t+1}$ of agent $i$ generated from Algorithm \ref{alg:soldd} at $t$-th and $(t+1)$-th instant. The function iterates $f_{i,t}$ and $f_{i,t+1}$ are parametrized  by dictionary $\bbD_{i,t}$ and $\bbD_{i,t+1}$ and weights $\bbw_{i,t}$ and $\bbw_{i,t+1}$, respectively. The dictionary size corresponding to $f_{i,t}$ and $f_{i,t+1}$ in dictionary $\bbD_{i,t}$ and $\bbD_{i,t+1}$ are denoted by $M_{i,t}$ and $M_{i,t+1}$, respectively. The kernel dictionary $\bbD_{i,t+1}$ is formed from $\tbD_{i,t+1}= [\bbD_{i,t},\;\;\bbx_{i,t}]$ by selecting a subset of  $M_{i,t+1}$ columns from $\tilde{M}_{i,t+1}=M_{i,t}+1$ number of columns of $\tbD_{i,t+1}$  that best approximate $\tilde{f}_{i,t+1}$ in terms of Hilbert norm error, i.e., $\|f_{i,t+1} - \tilde{f}_{i,t+1} \|_{\ccalH} \leq \eps $, where $\eps$ is the error tolerance.
Suppose the model order of function $f_{i,t+1}$ is less than equal to that of $f_{i,t}$, i.e., $M_{i,t+1}\le M_{i,t}$, which holds when the stopping criteria of KOMP is violated for dictionary $\tbD_{i,t+1}$:
\begin{align}\label{eq:dist_eq2}
\min_{j=1,\dots,M_{i,t}+1} \gamma_j\le \eps,
\end{align}
 where $\gamma_j$ is the minimal approximation error with dictionary element $\bbd_{i,j}$ removed from dictionary $\tbD_{i,t+1}$ defined as
\begin{align}\label{eq:dist_eq3}
\gamma_j=\min_{\bbw\in \reals^{\tilde{M}_{i,t+1}-1}} \|\tilde{f}_{i,t+1}(\cdot)-\sum_{k\in \ccalI \setminus \{j\}} w_k\kappa(\bbd_{i,k}, \cdot)\|_\ccalH,
\end{align} 
 where $\ccalI=\{1,\dots,M_{i,t}+1\}$. 
 
Observe that \eqref{eq:dist_eq2} lower bounds the approximation error $\gamma_{M_{i,t}+1}$ of removing the most recently added feature vector $\bbx_{i,t}$. Thus if $\gamma_{M_{i,t}+1}\le \eps$, then \eqref{eq:dist_eq2} is satisfied and the relation $M_{i,t+1}\le M_{i,t}$ holds, implying the model order does not grow. Hence it is adequate to  consider $\gamma_{M_{i,t}+1}$.

Using the definition of $\tilde{f}_{i,t+1}$ and denoting $\ccalI'\coloneqq \ccalI \setminus \{M_{i,t}+1\}$, we write $\gamma_{M_{i,t}+1}$ as  
\begin{align}\label{eq:dist_eq4}
&\gamma_{M_{i,t}+1}\\
&=\min_{\bbu\in \reals^{M_{i,t}}} \|{f}_{i,t} -\eta \nabla_{f_i}\hat{\ccalL}_{t}(\bbf_t,\bbmu_t)-\sum_{k\in \ccalI'}  u_k\kappa(\bbd_{i,k}, \cdot)\|_\ccalH. \nonumber
\end{align}
The minimizer of \eqref{eq:dist_eq4} is obtained for $\bbu^*$ is obtained via a least-squares computation, and takes the form:
%
%
\begin{align}\label{eq:dist_eq6}
\bbu^*&=(1-\eta\lambda)\bbw_{i,t}-\eta\Big[\ell_i'(f_{i,t}(\bbx_{i,t}),y_{i,t}) \\
&+ \sum_{j\in n_i}\mu_{ij,t}h'_{ij}(f_{i,t}(\bbx_{i,t}),f_{j,t}(\bbx_{i,t}))\Big]\bbK_{\bbD_{i,t},\bbD_{i,t}}^{-1}\boldsymbol{\kappa}_{\bbD_{i,t}}(\bbx_{i,t}).\nonumber
\end{align}
Further, we define $\ell_i'(f_{i,t})$  as
\begin{align}\label{eq:l_prime}
\ell_i'(f_{i,t}):=&\ell_i'(f_{i,t}(\bbx_{i,t}),y_{i,t}) \nonumber\\
&+ \sum_{j\in n_i}\mu_{ij,t}h'_{ij}(f_{i,t}(\bbx_{i,t}),f_{j,t}(\bbx_{i,t}))\big],
\end{align}
Substituting $\bbu^*$ from \eqref{eq:dist_eq6} into \eqref{eq:dist_eq4} and applying Cauchy-Schwartz with \eqref{eq:l_prime}, we obtain $\gamma_{M_{i,t}+1}$ as 
\begin{align}\label{eq:dist_eq8}
\gamma_{M_{i,t}+1} 
&\le |{\eta\ell_i'(f_{i,t})}| \\
&\times \Big\lVert \kappa(\bbx_{i,t},\cdot)-\Big[\bbK_{\bbD_{i,t},\bbD_{i,t}}^{-1}\boldsymbol{\kappa}_{\bbD_{i,t}}(\bbx_{i,t})\Big]^T\boldsymbol{\kappa}_{\bbD_{i,t}}(\cdot)\Big\rVert_\ccalH.\nonumber
\end{align}
It can be observed from the right hand side of \eqref{eq:dist_eq8} that the Hilbert-norm term can be replaced by using the definition of subspace distance from \eqref{eq:dist1}. Thus, we get
\begin{align}\label{eq:dist_eq9}
\gamma_{M_{i,t}+1} \le |{\eta\ell_i'(f_{i,t})}| ~\text{dist}(\kappa(\bbx_{i,t}, \cdot), \ccalH_{\bbD_{i,t}}).
\end{align}
Now for $\gamma_{M_{i,t}+1}\le \eps$, the right hand side of \eqref{eq:dist_eq9} should also be upper bounded by $\eps$ and thus can be written as
\begin{align}\label{eq:dist_eq10}
\text{dist}(\kappa(\bbx_{i,t}, \cdot), \ccalH_{\bbD_{i,t}}) \le \frac{\eps}{\eta|{\ell_i'(f_{i,t})}|},
\end{align}
where we have divided both the sides by $|{\ell_i'(f_{i,t})}|$. Note that if \eqref{eq:dist_eq10} holds, then $\gamma_{M_{i,t}+1}\le \eps$ and since $\gamma_{M_{i,t}+1}\geq \min_j \gamma_j$, we may conclude that  \eqref{eq:dist_eq2} is satisfied. Implying the model order at the subsequent steps does not grow, i.e., $M_{i,t+1}\le M_{i,t}$.

Now, let's take the contrapositive  of the expression in \eqref{eq:dist_eq10} to observe that growth in the model order ($M_{i,t+1}= M_{i,t}+1$) implies that the condition
\begin{align}\label{eq:dist_eq11}
\text{dist}(\kappa(\bbx_{i,t}, \cdot), \ccalH_{\bbD_{i,t}}) > \frac{\eps}{\eta|{\ell_i'(f_{i,t})}|},
\end{align}
holds. Therefore, every time a new point is added to the model, the corresponding kernel function, i.e., $\kappa(\bbx_{i,t}, \cdot)$ is at least $\frac{\eps}{\eta|{\ell_i'(f_{i,t})}|}$ distance far away from  every other kernel
function in the current model defined by dictionary $\bbD_{i,t}$.

 Now, to have a bound on the right-hand side term of \eqref{eq:dist_eq11}, we bound the denominator of the right-hand side of  \eqref{eq:dist_eq11}. Thus, we upper bound $|{\ell_i'(f_{i,t})}|$ as
\begin{align}\label{eq:dist_eq12_temp}
&|{\ell_i'(f_{i,t})}|\nonumber\\
&=|{\big[\ell_i'(f_{i,t}(\bbx_{i,t}),y_{i,t}) + \sum_{j\in n_i}\mu_{ij,t}h'_{ij}(f_{i,t}(\bbx_{i,t}),f_{j,t}(\bbx_{i,t}))\big]}|\nonumber\\
&\le |\ell_i'(f_{i,t}(\bbx_{i,t}),y_{i,t})| + | \sum_{j\in n_i}\mu_{ij,t}h'_{ij}(f_{i,t}(\bbx_{i,t}),f_{j,t}(\bbx_{i,t}))|\nonumber\\
& \le C + L_h E R_{i,t},
\end{align}
where in the first equality we have used the definition of $\ell_i'(f_{i,t})$ from \eqref{eq:l_prime} and  the second inequality  is obtained by using triangle inequality. To obtain the last inequality in \eqref{eq:dist_eq12_temp}, we use Assumption \ref{as:second} and \ref{as:third}, with  $R_{i,t}=\max_{j\in n_i}|\mu_{ij,t}|$ and upper-bounded $|n_i|$ by the total number of edges $E$. Subsequently, we denote the right hand side of  \eqref{eq:dist_eq12_temp} as ${R_{M_{i,t}}}\coloneqq C + L_h E R_{i,t}$. Substituting into \eqref{eq:dist_eq11} yields
%
%
\begin{align}\label{eq:dist_eq13}
\text{dist}(\kappa(\bbx_{i,t}, \cdot), \ccalH_{\bbD_{i,t}}) > \frac{\eps }{\eta {R_{M_{i,t}}}}.
\end{align}
Hence, the stopping criterion for the newest point is violated whenever it satisfies the condition, $\|\phi(\bbx_{i,t})-\phi(\bbd_{i,k})\|_2\le \frac{\eps }{\eta {R_{M_{i,t}}}}$ for $k\in\{1,\dots,M_{i,t}\}$ meaning $\phi(\bbx_{i,t})$ can be well approximated by $\phi(\bbd_{i,k})$ with already existing point $\bbd_{i,k}$ in the dictionary. Now for the finite model order proof, we proceed in a manner similar to the proof of \cite[Thm. 3.1]{1315946}. Since feature space $\ccalX$ is compact and $\phi(\bbx)=\kappa(\bbx,\cdot)$ is continuous (as $\kappa$ is continuous), we can deduce that $\phi(\ccalX)$ is compact. Therefore, the covering number (number of balls with radius $\varrho=\frac{\eps }{\eta {R_{M_{i,t}}}}$ to cover the set $\phi(\ccalX)$) of set $\phi(\ccalX)$ is finite \cite{zhou2002covering}.  The covering number of a set is finite if and only if its packing number (maximum number of points in $\phi(\ccalX)$ separated by distance larger than $\varrho$) is finite. This means that the number of points in $\phi(\ccalX)$ separated by distance $\varrho$ is finite. Note that KOMP retains points which satisfy $\|\phi(\bbd_{i,j})-\phi(\bbd_{i,k})\|_2 >\varrho$, i.e., the dictionary points are $\varrho$ separated. Thus, when the packing number of $\phi(\ccalX)$ with scale $\varrho$ is finite, the number of dictionary points is also finite.

From \cite[Proposition 2.2]{1315946}, we know that for a Lipschitz continuous Mercer
kernel $\kappa$ on a compact set $\ccalX \subset \reals^p$, there exists a constant $\beta$ depending upon the $\ccalX$ and the kernel function such that for any training set $\{\bbx_{i,t}\}_{t=1}^\infty$ and any $\vartheta>0$,  the number of elements in the dictionary satisfies 
\begin{align}\label{eq:dist_eq14}
M_{i,t}\le \beta \Big(\frac{1}{\vartheta}\Big)^{2p}.
\end{align}
From \eqref{eq:dist_eq13}, observe $\vartheta=\frac{\eps }{\eta {R_{M_{i,t}}}}$. With \eqref{eq:dist_eq14} we may write
%
$M_{i,t}\le \beta ({\eta {R_{M_{i,t}}}}/{\eps})^{2p}$,
%
which is the required result stated in  \eqref{eq:agent_mo} of Thm. \ref{thm:bound_memory_order}. To obtain the model order  $M_t$ of the multi-agent system, we sum the model order of individual nodes across network, i.e.,
%
$M_t=\sum_{i=1}^N M_{i,t}$
%
as stated in Thm. \ref{thm:bound_memory_order}.\hfill $\blacksquare$

\section{Proof of Thm. \ref{thm:convergence}}
\label{app:proof_of_theorem}
%
Before discussing the proof, we introduce the following compact notations to make the analysis clear and compact. 
 We further use the following short-hand notations to denote the expressions involving $h_{ij}(\cdot,\cdot)$ as
\begin{align}\label{notation}
g_{ij}(f_t(\bbx_t)):=& h_{ij}(f_{i,t}(\bbx_{i,t}),f_{j,t}(\bbx_{i,t}))-\gamma_{ij}
\end{align} 
Moreover, the $E$-fold stacking of the constraints across all the edges is denoted as $\bbG^t(\bbf)\coloneqq \text{vec}[g_{ij}(f(\bbx_t))]$ and $\bbG(\bbf)\coloneqq \mbE_{\bbx_t}[\bbG^t(\bbf)]$. The intermediate results required for the proof  are stated in Lemma \ref{thm:bound_gap}-\ref{lemma:inst_lagrang_diff} (detailed in the  supplementary material).  Consider the statement of Lemma \ref{lemma:inst_lagrang_diff} (c.f. \eqref{eq:inst_lagrang_diff}), expand the left hand side of \eqref{eq:inst_lagrang_diff} using the definition of \eqref{eq:stochastic_approx}, further utilizing the notation of  $S^t(\bbf_t)$ stated in \eqref{eq:main_prob} and $\bbG^t(\cdot)$ in \eqref{notation}, we can write 
\begin{align}
&S^t(\bbf_t) + \ip{\bbmu,\bbG^t(\bbf_t) + \nu\one} - \frac{\delta\nu}{2}\norm{\bbmu}^2\nonumber\\
&\qquad\quad-S^t(\bbf)-\ip{\bbmu_t,\bbG^t(\bbf) + \nu\one} + \frac{\delta\nu}{2}\norm{\bbmu_t}^2\nonumber
\end{align}
\begin{align}\label{eq:converproof1}
&\leq \frac{1}{2\eta}\Delta_t + \frac{\eta}{2}\big( 2\|{\nabla}_{\bbf}\hat{\ccalL}_{t}(\bbf_{t},\bbmu_{t})\|_{\ccalH}^2+\|\nabla_{\bbmu}\hat{\ccalL}_{t}(\bbf_t,\mathbf{\bbmu}_t)\|^2\big)\nonumber \\
&\quad+\frac{\sqrt{V}\eps}{\eta}\|\bbf_t-\bbf\|_{\ccalH}+\frac{V\eps^2}{\eta}.
\end{align}
where $\Delta_t=(\|\bbf_t-\bbf\|_{\ccalH}^2-\|\bbf_{t+1}-\bbf\|_{\ccalH}^2+\|\bbmu_{t}-\bbmu\|^2-\|\bbmu_{t+1}-\bbmu\|^2)$. Next, let us take the total expectation on both sides of \eqref{eq:converproof1}. From Lemma 
\ref{lemma:bound_primal_dual_grad}, substituting the upper bounds of $\|{\nabla}_{\bbf}\hat{\ccalL}_{t}(\bbf_{t},\bbmu_{t})\|_{\ccalH}^2$ and $\|\nabla_{\bbmu}\hat{\ccalL}_{t}(\bbf_t,\mathbf{\bbmu}_t)\|^2$ to \eqref{eq:converproof1}, the right hand side of \eqref{eq:converproof1} can be written as 
%
\begin{align}\label{inter}
&\mbE\Big[\frac{1}{2\eta}\Delta_t +\frac{\sqrt{V}\eps}{\eta}\|\bbf_t-\bbf\|_{\ccalH}+\frac{V\eps^2}{\eta}\Big]\nonumber
\\
&\quad+\mbE\Big[\frac{\eta}{2}\big(2(4V X^2 C^2 + 4V X^2 L_h^2 E \|\bbmu_t\|^2+2V\lambda^2 \cdot R_{\ccalB}^2)\nonumber
\\
&\quad\quad\quad + E\Big((2K_1+2L_h^2X^2\cdot R_{\ccalB}^2)+ 2\delta^2\eta^2\|\bbmu_t\|^2 \Big)\Big]. 
\end{align}
%
Since each individual $f_{i,t}$ and $f_i$ for $\in\{i,\dots,V\}$ in the ball $\ccalB$ have finite Hilbert norm and is bounded by $R_{\ccalB}$, the term $\|\bbf_t-\bbf\|_{\ccalH}$ can be upper bounded by $2\sqrt{V}R_{\ccalB}$. Next, we define $K\coloneqq 8V X^2 C^2 +4V\lambda^2 \cdot R_{\ccalB}^2+2EK_1+2EL_h^2X^2\cdot R_{\ccalB}^2$. Now using the the bound of  $\|\bbf_t-\bbf\|_{\ccalH}$ and the definition of $K$, we can upper bound the expression in \eqref{inter}, and then collectively writing the left and right hand side terms together, we get
\begin{align}\label{eq:converproof3}
&\mbE\Big[{S(\bbf_t)}-{S(\bbf)}+ \ip{\bbmu,{\bbG(\bbf_t)} + \nu\one} - \frac{\delta\eta}{2}\norm{\bbmu}^2 \\
&\qquad -\ip{\bbmu_t,{\bbG(\bbf)} + \nu\one}\Big] \nonumber\\
& \leq  \mbE\Big[\frac{1}{2\eta}\Delta_t +{\frac{{2}V\eps}{\eta}}R_{\ccalB}+\frac{V\eps^2}{\eta}\Big]+\mbE\Big[\frac{\eta}{2}\big(K+C(\delta)\|\bbmu_t\|^2\big)\Big]. \nonumber
\end{align}
where $C(\delta):=8V X^2 L_h^2 E  + 2E\delta^2\eta^2 -\delta$. Next, we select the constant parameter $\delta$ such that $C(\delta)\leq 0$, which then allows us to drop the term involving $\|\bbmu_t\|^2$ from the second expected term of right-hand side of \eqref{eq:converproof3}. Further, take the sum of the expression in   \eqref{eq:converproof3} over times $t=1,\dots,T$, assume the initialization $\bbf_1=0\in\ccalH^V$ and $\bbmu_1=0\in\mbR_+^E$,  we obtain
\begin{align}
\sum_{t=1}^T&\mbE\Big[{S(\bbf_t)}-{S(\bbf)}+ \ip{\bbmu,{\bbG(\bbf_t)} + \nu\one} - \frac{\delta\eta}{2}\norm{\bbmu}^2 \label{eq:converproof5}\\
&\qquad -\ip{\bbmu_t,{\bbG(\bbf)} + \nu\one}\Big]\nonumber\\
&\leq \frac{1}{2\eta}\big[\|\bbf\|_{\ccalH}^2+\|\bbmu\|^2\big]+ \frac{{2}V\eps T R_{\ccalB}+V\eps^2 T}{\eta}+\frac{\eta K T}{2}.\nonumber
\end{align}
  where we drop the negative terms remaining after the telescopic sum since $\|\bbf_{T+1}-\bbf\|_{\ccalH}^2$ and $\|\bbmu_{T+1}-\bbmu\|^2$ are always positive. It can be observed from \eqref{eq:converproof5} that the right-hand side of this inequality is deterministic. We now take $\bbf$ to be the solution $\bbf_\nu^\star$ of \eqref{eq:prob_zero_cons}, which in turn implies $\bbf_\nu^\star$ must satisfy the inequality constraint of  \eqref{eq:prob_zero_cons}. This means that $\bbf_\nu^\star$ is a feasible point, such that $\sum_{t=1}^T\sum_{(i,j)\in\ccalE}\mu_{ij,t}(g_{ij}(f^\star_{i}(\bbx_{i,t}),f^\star_{j}(\bbx_{i,t}))+\nu)\leq 0$ holds. Thus we can simply drop this term in \eqref{eq:converproof5} and collecting the terms containing $\|\bbmu\|^2$ together, we obtain
\begin{align}\label{eq:converproof6}
&\sum_{t=1}^T\mbE\Big[{S(\bbf_t)}-{S(\bbf_\nu^\star)}+\ip{\bbmu,{\bbG(\bbf_t)}+\nu\one}\Big] - z(\eta,T)\|\bbmu\|^{2}\nonumber\\
&\qquad\leq \frac{1}{2\eta}\|\bbf_\nu^\star\|_{\ccalH}^2+ {\frac{V\eps T}{\eta}}({2}R_{\ccalB}+\eps)+\frac{\eta K T}{2}.
\end{align} 
%
where $z(\eta,T):=\frac{\delta\eta T}{2}+\frac{1}{2\eta}$. Next, we maximize the left-hand side of \eqref{eq:converproof6} over $\bbmu$ to obtain the optimal Lagrange multiplier which controls the growth of the long-term constraint violation, whose closed-form expression is given by
\begin{align}\label{eq:bar_mu}
\bar{\mu}_{ij}= \frac{1}{2(\delta\eta T+1/ \eta)}\big[\sum_{t=1}^T\mbE [g_{ij}(f_t(\bbx_t))]+\nu\big]_+.
\end{align}
Therefore, selecting $\bbmu=\bar{\bbmu}$ in \eqref{eq:converproof6}, we obtain 
\begin{align}\label{eq:converproof8}
&\sum_{t=1}^T\mbE\big[{S(\bbf_t)}-{S(\bbf_\nu^\star)}\big]+\!\!\!\!\sum_{(i,j)\in\ccalE}\!\!\!\!\frac{\Big[\sum_{t=1}^T\Big(\mbE[g_{ij}(f_t(\bbx_t))]+\nu\Big)\Big]_+^2}{2(\delta\eta T+1/ \eta)}\nonumber\\
&\qquad\qquad\leq \frac{1}{2\eta}\|\bbf_\nu^\star\|_{\ccalH}^2+ {\frac{V\eps T}{\eta}}({2}R_{\ccalB}+\eps)+\frac{\eta K T}{2}.
\end{align}
%
Firstly, consider the objective error sequence $\mbE\big[{S(\bbf_t)}-{S(\bbf_\nu^\star)}\big]$, we observe from \eqref{eq:converproof8} that the second term  present on the left-side of the inequality can be dropped without affecting the inequality owing to the fact that it is positive. So we obtain
%
%
\begin{align}
\sum_{t=1}^T\mbE\big[{S(\bbf_t)}-{S(\bbf_\nu^\star)}\big]
\leq \frac{\|\bbf_\nu^\star\|_{\ccalH}^2}{2\eta}+ {\frac{V\eps T}{\eta}}({2}R_{\ccalB}+\eps)+\frac{\eta K T}{2}.\nonumber
\end{align}
Using Lemma \ref{thm:bound_gap} and summing over $t=1, \dots, T$, we have 
\begin{align}\label{eq:converproof9_1}
\sum_{t=1}^T \big[{S(\bbf_\nu^*)}-{S(\bbf^*)}\big]\le \frac{{4}VR_{\ccalB}(C X+\lambda R_{\ccalB})}{\xi}\nu T.
\end{align}
Adding these inequalities, and then taking the average over $T$, we obtain
\begin{align}\label{eq:converproof9_1_temp}
\frac{1}{T}\sum_{t=1}^T\mbE\big[{S(\bbf_t)}-{S(\bbf^*)}\big]&\leq \frac{1}{2\eta T}\|\bbf_\nu^\star\|_{\ccalH}^2+ {\frac{V\eps}{\eta}}({2}R_{\ccalB}+\eps)\nonumber\\
&\hspace{-1cm} +\frac{\eta K}{2} + \frac{{4}VR_{\ccalB}(C X+\lambda R_{\ccalB})}{\xi}\nu. 
\end{align}
%
%
Since $\eps\leq {2}R_{\ccalB}$, we upper bound  $\eps$ present in the second term of the left hand side of \eqref{eq:converproof9_1_temp} by ${2}R_{\ccalB}$. Next, using the definition $\alpha=\eps/\eta$ from Thm. \ref{thm:bound_memory_order}, and setting step size $\eta=1/\sqrt{T}$ and $\nu=\zeta T^{-1/2} + \Lambda \alpha$ for some $(\zeta, \Lambda)>0$, we obtain 
%
%
%
\begin{align}\label{eq:converproof9_1_final}
\frac{1}{T}\sum_{t=1}^T\mbE\big[{S(\bbf_t)}-{S(\bbf^*)}\big]\leq\mathcal{O}(T^{-1/2}+\alpha)
\end{align}
which is the required result in \eqref{eq:func_order}.

%
Next, we establish the bound on the growth of the constraint violation. For this, we first denote  $\ccalL^s$ as the standard Lagrangian  for \eqref{eq:prob_zero_cons} and write it for  ${\bbf}$ and $\bbmu$ as, $\ccalL^s({\bbf},\bbmu) = {S(\bbf)} + \ip{\bbmu,{\bbG(\bbf)}+\nu\one}$. The standard Lagrangian for ${(\bbf_t,\bbmu)}$ and $(\bbf,\bbmu_t)$ are defined similarly. Now using the expressions of $\ccalL^s(\bbf_t,\bbmu)$ and  $\ccalL^s(\bbf,\bbmu_t)$, we  rewrite \eqref{eq:converproof5} as
\begin{align}\label{eq:converproof10_4}
&\sum_{t=1}^T \mbE\big[\ccalL^s(\bbf_t,\bbmu)- \ccalL^s(\bbf,\bbmu_t)\big]\le \frac{1}{2\eta}\mbE\big[\|\bbf\|_{\ccalH}^2+\|\bbmu\|^2\big]\nonumber\\
&\quad\quad+ \frac{{2}V\eps T R_{\ccalB}+V\eps^2 T}{\eta}+\frac{\eta K T}{2} +  \frac{\delta\eta T}{2}\mbE\|\bbmu\|^{2}.
\end{align}
{Since $(\bbf_\nu^*,\bbmu_\nu^*)$ is the optimal pair for standard Lagrangian $\ccalL^s(\bbf,\bbmu)$ of \eqref{eq:prob_zero_cons} and assuming $\mathbf{1}_i$ to be a vector of all zeros except the $i$-th entry which is unity, write for $\bbmu=\mathbf{1}_i+\bbmu_\nu^*$:
\begin{align}\label{eq:converproof10_5}
&\mbE\big[\ccalL^s(\bbf_t,\mathbf{1}_i+\bbmu_\nu^*)\big]-\mbE\big[\ccalL^s(\bbf_\nu^*,\bbmu_\nu^*)\big]\nonumber\\
&=\mbE\big[{S(\bbf_t)}+ \langle \mathbf{1}_i+\bbmu_\nu^*, \bbG(\bbf_t)+ \nu\mathbf{1}\rangle\big]-\mbE\big[\ccalL^s(\bbf_\nu^*,\bbmu_\nu^*)\big]\nonumber\\
&=\mbE\big[{S(\bbf_t)}+ \langle \bbmu_\nu^*, \bbG(\bbf_t)+ \nu\mathbf{1}\rangle\big]-\mbE\big[\ccalL^s(\bbf_\nu^*,\bbmu_\nu^*)\big]\nonumber\\
&\quad + \mbE\big[\langle \mathbf{1}_i, \bbG(\bbf_t)+ \nu\mathbf{1}\rangle\big]\nonumber\\
&=\mbE\big[\ccalL^s(\bbf_t,\bbmu_\nu^*)-\ccalL^s(\bbf_\nu^*,\bbmu_\nu^*)\big] + \mbE\big[G_i(\bbf_t)+\nu\big].
\end{align}
The first equality in \eqref{eq:converproof10_5} is written by using the definition of standard Lagrangian $\ccalL^s$ and $\bbG$ denotes the stacking of the constraints of all edges as defined in the paragraph following  \eqref{notation}. In the second equality, we split $\langle \mathbf{1}_i+\bbmu_\nu^*, \bbG(\bbf_t)+ \nu\mathbf{1}\rangle$ into $\langle \bbmu_\nu^*, \bbG(\bbf_t)+ \nu\mathbf{1}\rangle$ and $\langle \mathbf{1}_i, \bbG(\bbf_t)+ \nu\mathbf{1}\rangle$ using additivity of the inner product. Via the definition of $\ccalL^s$, we rewrite ${S}(\bbf_t)+ \langle \bbmu_\nu^*, \bbG(\bbf_t)+ \nu\mathbf{1}\rangle$ in the second equality as $\ccalL^s(\bbf_t,\bbmu_\nu^*)$ in the third equality. The last term in the third equality is written using the definition of  $\mathbf{1}_i$ and $G_i$ denotes the $i$-th constraint of the stacked constraint vector $\bbG$.
Since $(\bbf_\nu^*,\bbmu_\nu^*)$ is a saddle point of $\ccalL^s$, it holds that
\begin{align}\label{eq:converproof10_6}
\mbE\big[\ccalL^s(\bbf_\nu^*,\bbmu_t)\big]\le \mbE\big[\ccalL^s(\bbf_\nu^*,\bbmu_\nu^*)\big]\le \mbE\big[\ccalL^s(\bbf_t,\bbmu_\nu^*)\big].
\end{align}
From the relation of \eqref{eq:converproof10_6}, we know the first term in the third equality of \eqref{eq:converproof10_5} is positive, and thus drop it to write:
\begin{align}\label{eq:converproof10_7}
&\mbE\big[G_i(\bbf_t)+\nu\big]
\le \mbE\big[\ccalL^s(\bbf_t,\mathbf{1}_i+\bbmu_\nu^*)\big]-\mbE\big[\ccalL^s(\bbf_\nu^*,\bbmu_\nu^*)\big]\nonumber\\
&\quad =\mbE\big[\ccalL^s(\bbf_t,\mathbf{1}_i+\bbmu_\nu^*)\big]-\mbE\big[\ccalL^s(\bbf_\nu^*,\bbmu_t)\big]+\mbE\big[\ccalL^s(\bbf_\nu^*,\bbmu_t)\big]\nonumber\\
&\quad\quad-\mbE\big[\ccalL^s(\bbf_\nu^*,\bbmu_\nu^*)\big]\nonumber\\
& \quad \le \mbE\big[\ccalL^s(\bbf_t,\mathbf{1}_i+\bbmu_\nu^*)\big]-\mbE\big[\ccalL^s(\bbf_\nu^*,\bbmu_t)\big]
\end{align}
where in the second equality we have added and subtracted $\mbE\big[\ccalL^s(\bbf_\nu^*,\bbmu_t)\big]$ and the last inequality comes from the fact that $\mbE\big[\ccalL^s(\bbf_\nu^*,\bbmu_t)\big]-\mbE\big[\ccalL^s(\bbf_\nu^*,\bbmu_\nu^*)\big]\le 0$ from the relation  \eqref{eq:converproof10_6}.
Now, summing \eqref{eq:converproof10_7} over $t=1, \dots, T$ and using \eqref{eq:converproof10_4} we get:
%
\begin{align}\label{eq:converproof10_7_temp}
&\sum_{t=1}^T\mbE\big[G_i(\bbf_t)+\nu\big] \le \sum_{t=1}^T\big(\mbE\big[\ccalL^s(\bbf_t,\mathbf{1}_i+\bbmu_\nu^*)\big]-\mbE\big[\ccalL^s(\bbf_\nu^*,\bbmu_t)\big]\Big)\nonumber\\
 & \le \frac{1}{2\eta}\Big[\|\bbf_\nu^*\|_{\ccalH}^2+\|\mathbf{1}_i+\bbmu_\nu^*\|^2\Big] + V\alpha T (2R_{\ccalB} + \eps)+\frac{ \eta K T}{2} \nonumber\\
&\qquad  +  \frac{\delta\eta {T}}{2}\|\mathbf{1}_i+\bbmu_\nu^*\|^{2},
\end{align}
where $\alpha=\eps/\eta$ defined in Thm. \ref{thm:bound_memory_order}.
Next, using Assumption \ref{as:fifth},  
the first term in \eqref{eq:converproof10_7_temp} by $R_{\ccalB}^2$,  and the second term on the right hand side of \eqref{eq:converproof10_7_temp} is bounded as  
\begin{align}\label{eq:converproof10_8_1}
\|\mathbf{1}_i+\bbmu_\nu^*\|^2&\le 2 \|\mathbf{1}_i\|^2 + 2 \|\bbmu_\nu^*\|^2
\le 2 + 2\Big(\sum_{i=1}^E \mu_{i,\nu}^*\Big)^2\nonumber\\
&\qquad\quad\le 2 + 2 \Big(\frac{{4}VR_{\ccalB}(C X+\lambda R_{\ccalB})}{\xi}\Big)^2.
\end{align}
In the second inequality of \eqref{eq:converproof10_8_1}, we have used the fact that each $\mu_{i,\nu}^*$ is positive thus allowing us to use the inequality $(a^2+b^2+c^2)\le (a+b+c)^2$ where $a,b$ and $c$ are positive. In the last inequality of \eqref{eq:converproof10_8_1} we have used the upper bound of \eqref{eq:boundgap_9} (in the supplementary) to bound the $\big(\sum_{i=1}^E \mu_{i,\nu}^*\big)^2$ term.
Now setting step size $\eta=1/\sqrt{T}$ and using Assumption \ref{as:fifth} and the upper bound of \eqref{eq:converproof10_8_1}, we get 
\begin{align}\label{eq:converproof10_8_1_1}
&\sum_{t=1}^T\mbE\big[G_i(\bbf_t)\big] \leq \sqrt{T} \Gamma + V\alpha T (2R_{\ccalB} + \eps)-\nu T.
\end{align}
where $\Gamma \coloneqq \frac{1}{2}\bigg[R_{\ccalB}^2 + (1+\delta)\bigg(2 + 2 \Big(\frac{{4}VR_{\ccalB}(C X+\lambda R_{\ccalB})}{\xi}\Big)^{2}\bigg) + K\bigg]$. If  $\eps\leq 2 R_{\ccalB}$, then it follows from \eqref{eq:converproof10_8_1_1} that
\begin{align}\label{eq:converproof10_8_1_2}
\frac{1}{T}\sum_{t=1}^T\mbE\big[G_i(\bbf_t)\big] \leq \frac{1}{\sqrt{T}}\Gamma + 4VR_{\ccalB}\alpha -\nu.
\end{align}
Setting $\nu=\zeta T^{-1/2} + \Lambda \alpha$, where $\zeta\geq \Gamma$ and $\Lambda \geq 4VR_{\ccalB}$ ensures the aggregation of constraints gets satisfied on long run, i.e., $\sum_{t=1}^T\mbE\Big[G_i(\bbf_t)\Big]\le 0$, as stated in \eqref{eq:constr_order}. Analogous logic applies for all edges $i\in \ccalE$.\hfill $\blacksquare$
}

\vspace{-2mm}
\bibliographystyle{IEEEtran}
\bibliography{IEEEabrv,bibliography}
\newpage 

\onecolumn
\begin{center} { \Large Supplementary Material for:\vspace{2mm} \\ 
 Adaptive Kernel Learning in Heterogeneous Networks}\vspace{2mm} \\
 by Hrusikesha Pradhan, Amrit Singh Bedi, Alec Koppel, and Ketan Rajawat
\end{center}
\section{Proof of Corollary \ref{thm:representer} }\label{proof_representthm}
The proof generalizes that of the classical Representer Theorem. The inner minimization in \eqref{eq:primaldualprob_emp} with respect to $\bbf$ can be written as
\begin{align}\label{eq:rep_proof1}
\ccalE(\bbf;\ccalS,\bbmu)&=\sum_{i\in\ccalV}\frac{1}{N}\sum_{k=1}^N\bigg[ \ell_i(f_i\big(\bbx_{i,k}), y_{i,k}\big)+\sum_{j\in n_i}  \mu_{ij}\Big(h_{ij}(f_i(\bbx_{i,k}),f_j(\bbx_{i,k}))-\gamma_{ij}\Big) \bigg].
\end{align}
Let the subspace of functions spanned by the kernel function $\kappa(\bbx_{i,t},.)$ for $\bbx_{i,k}\in \ccalS_i$ be denoted as $\ccalF_{\kappa,\ccalS_i}$, i.e.,
\begin{align}
\ccalF_{\kappa,\ccalS_i}=\text{span}\{\kappa(\bbx_{i,k}):1\le k\leq N\}.
\end{align}
We denote the projection of $f_i$ on the subspace $\ccalF_{\kappa,\ccalS_i}$ as $f_{ip}$ and the component perpendicular to the subspace as $f_{i\perp}$, which can be written as $f_{i\perp}=f_i-f_{ip}$. Now we can write 
\begin{align}\label{eq:rep_proof2}
f_i(\bbx_{ik})=\langle f_i,\kappa(\bbx_{ik},.)\rangle&=\langle f_{ip},\kappa(\bbx_{i,k},.)\rangle + \langle f_{i\perp},\kappa(\bbx_{i,k},.)\rangle\nonumber\\
&=\langle f_{ip},\kappa(\bbx_{i,k},.)\rangle=f_{ip}(\bbx_{i,k}).
\end{align}
Thus the evaluation of $f_i$ at any arbitrary training  point $\bbx_{ik}$ is independent of $f_{i\perp}$. Using this fact, we can now write \eqref{eq:rep_proof1} as,
%
\begin{align}\label{eq:rep_proof3}
\ccalE(\bbf;\ccalS,\bbmu)=& \sum_{i\in\ccalV}\frac{1}{N}\sum_{k=1}^N\bigg[ \ell_i(f_{ip}\big(\bbx_{i,k}), y_{i,k}\big)+\sum_{j\in n_i}  \mu_{ij}\Big(h_{ij}(f_{ip}(\bbx_{i,k}),f_j(\bbx_{i,k}))-\gamma_{ij}\Big) \bigg].
\end{align}
Thus from \eqref{eq:rep_proof3}, we can say that $\ccalE(\bbf;\ccalS,\bbmu)$ is independent of $f_{i\perp}$. As  we are minimizing \eqref{eq:empirical_lagrangian} with respect to $f_i$, the evaluation of $f_j$ at the training point of node $i$ can be treated as a constant in  $\ccalE(\bbf;\ccalS,\bbmu)$ which is the first part in  \eqref{eq:empirical_lagrangian}. Additionally, note that $\lambda\cdot\|f_i\|_{\ccalH}^2\cdot 2^{-1}\geq \lambda\cdot\|f_{ip}\|_{\ccalH}^2\cdot 2^{-1}$. Therefore, given any $\bbmu$, the quantity $\ccalE(\bbf;\ccalS,\bbmu)+\sum_{i=1}^V\lambda\cdot\|f_i\|_{\ccalH}^2\cdot 2^{-1}$ is minimized at some $f_i^*(\bbmu_i)$ such that $f_i^*(\bbmu_i)$ lies in $\ccalF_{k,\ccalS_i}$. This holds specifically for $\bbmu_i^*$ where $f_i^*=f_i^*(\bbmu_i^*)$, there by completing the proof. \hfill $\blacksquare$

%


\section{Statement and Proof of Lemma  \ref{thm:bound_gap}}
\label{app:proof_bound_gap}
Using Assumption \ref{as:fourth}, we bound the gap between optimal of problem \eqref{eq:main_prob}
and \eqref{eq:prob_zero_cons} and  is presented as Lemma \ref{thm:bound_gap}. 
\begin{lemma}\label{thm:bound_gap}
Under Assumption \ref{as:second}, \ref{as:fourth} and \ref{as:fifth}, for $0\le \nu\le\xi/2$, it holds that:
\begin{align}
S(\bbf_\nu^*)-S(\bbf^*)\le \frac{{4}VR_{\ccalB}(C X+\lambda R_{\ccalB})}{\xi}\nu.
\end{align}
\end{lemma}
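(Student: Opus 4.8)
# Proof Proposal for Lemma \ref{thm:bound_gap}

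The plan is to exploit strong duality together with Slater's condition (Assumption \ref{as:fourth}) to control how much the optimal value increases when the constraints are tightened by $\nu$. The standard tool here is a sensitivity bound on the perturbation function of a convex program: tightening every constraint by $\nu$ cannot raise the optimal value by more than $\nu\,\|\bbmu^\star\|_1$, where $\bbmu^\star$ is an optimal dual variable for the \emph{original} problem \eqref{eq:main_prob}. So the argument breaks into two pieces: (i) establish the perturbation inequality $S(\bbf_\nu^\star) - S(\bbf^\star) \le \nu \sum_{(i,j)\in\ccalE}\mu_{ij}^\star$, and (ii) bound $\sum_{(i,j)}\mu_{ij}^\star$ using the Slater point $\bbf^\dagger$ and the Lipschitz/boundedness assumptions.

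For step (i): let $\bbmu^\star$ be an optimal dual variable for \eqref{eq:main_prob}, which exists by strong duality (Slater). By weak duality applied to the $\nu$-tightened problem \eqref{eq:prob_zero_cons}, evaluated at $\bbmu^\star$,
\begin{align}
S(\bbf_\nu^\star) &\le \max_{\bbmu\ge 0}\min_{\bbf}\Big[S(\bbf) + \sum_{(i,j)}\mu_{ij}\big(H_{ij}(f_i,f_j)+\nu-\gamma_{ij}\big)\Big] \nonumber\\
&= \min_{\bbf}\Big[S(\bbf) + \sum_{(i,j)}\mu_{ij}^\star\big(H_{ij}(f_i,f_j)-\gamma_{ij}\big)\Big] + \nu\sum_{(i,j)}\mu_{ij}^\star \nonumber\\
&= S(\bbf^\star) + \nu\sum_{(i,j)}\mu_{ij}^\star,
\end{align}
where the middle step uses that the $\nu$-term is constant in $\bbf$ and that evaluating the dual function of \eqref{eq:prob_zero_cons} at $\bbmu^\star$ lower bounds the max, and the last step is strong duality for \eqref{eq:main_prob}. (One must be slightly careful: I would instead directly bound the dual function of \eqref{eq:prob_zero_cons} at $\bbmu^\star$ from below by $S(\bbf^\star) + \nu\sum\mu_{ij}^\star$ and invoke weak duality for \eqref{eq:prob_zero_cons}; the inner minimization is the same one whose value at $\bbmu^\star$ equals $S(\bbf^\star)$ by strong duality of the untightened problem.)

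For step (ii): use the standard Slater bound on dual optimal variables. Plugging the strictly feasible $\bbf^\dagger$ into the Lagrangian of \eqref{eq:main_prob} and using $\tilde\ccalL(\bbf^\star,\bbmu^\star) \le \tilde\ccalL(\bbf^\dagger,\bbmu^\star)$ together with $H_{ij}(f_i^\dagger,f_j^\dagger)-\gamma_{ij}\le -\xi$, we get $\xi\sum_{(i,j)}\mu_{ij}^\star \le S(\bbf^\dagger) - S(\bbf^\star) \le S(\bbf^\dagger)$ (using $S\ge 0$, or more carefully $S(\bbf^\star)\ge 0$ from nonnegativity of losses and regularizer), hence $\sum_{(i,j)}\mu_{ij}^\star \le S(\bbf^\dagger)/\xi$. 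Then I bound $S(\bbf^\dagger)$ uniformly: for any $\bbf$ with each $\|f_i\|_{\ccalH}\le R_{\ccalB}$ (here I invoke Assumption \ref{as:fifth}, noting the Slater point can be taken in the ball, or bound $S$ at the constrained optimum of the tightened problem which does lie in the ball), Lipschitz continuity (Assumption \ref{as:second}) gives $|\ell_i(f_i(\bbx_i),y_i)| \le |\ell_i(0,y_i)| + C|f_i(\bbx_i)| \le \text{const} + CXR_{\ccalB}$ via the reproducing property and Assumption \ref{as:first}, and $(\lambda/2)\|f_i\|^2_{\ccalH}\le (\lambda/2)R_{\ccalB}^2$; summing over $V$ agents yields $S(\bbf^\dagger) \le 2VR_{\ccalB}(CX + \lambda R_{\ccalB})$ after absorbing constants (the factor $2$ and dropping the $\ell_i(0,\cdot)$ term is presumably handled by the paper's bounding conventions — I would match their normalization). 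Combining (i) and (ii) gives $S(\bbf_\nu^\star)-S(\bbf^\star) \le \nu\cdot \tfrac{2VR_{\ccalB}(CX+\lambda R_{\ccalB})}{\xi}$, and the stated bound with the factor $4$ follows from the doubly-defined-constraints scaling convention noted in the paper's footnote (each edge constraint counted twice).

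The main obstacle I anticipate is being careful about constants and conventions: the factor of $4$ versus $2$, whether $S(\bbf^\star)$ can simply be lower bounded by $0$, and the precise form of the uniform bound on $S$ over the ball $\ccalB$ (in particular handling the $\ell_i(0,y_i)$ term, which the compact-domain Assumption \ref{as:first} makes bounded but the paper evidently folds into its constants). The conceptual content — weak duality plus the Slater bound on $\|\bbmu^\star\|_1$ — is routine; matching the paper's exact constant is the only delicate bookkeeping. The condition $\nu \le \xi/2$ is presumably used to guarantee the tightened problem \eqref{eq:prob_zero_cons} remains strictly feasible (via $\bbf^\dagger$, which still satisfies $H_{ij}+\nu \le \gamma_{ij}$ since $\nu\le\xi$), so that strong duality and the above manipulations are valid for it as well.
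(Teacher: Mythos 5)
Your step (i) contains a genuine gap: the sensitivity inequality is derived with the wrong dual variable, and as a result the key inequality points in the wrong direction. Writing $d_\nu(\bbmu):=\min_{\bbf}\big[S(\bbf)+\sum_{(i,j)}\mu_{ij}(H_{ij}(f_i,f_j)+\nu-\gamma_{ij})\big]$ for the dual function of the tightened problem, your chain replaces $\max_{\bbmu\ge 0} d_\nu(\bbmu)$ by $d_\nu(\bbmu^\star)$ with $\bbmu^\star$ the dual optimum of the \emph{original} problem; but $\bbmu^\star$ is in general not a maximizer of $d_\nu$, so that step only gives $\max_{\bbmu} d_\nu(\bbmu)\ge d_\nu(\bbmu^\star)$, which cannot be chained with $S(\bbf_\nu^\star)\le\max_{\bbmu}d_\nu(\bbmu)$. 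Your proposed repair fails for the same reason: weak duality gives $d_\nu(\bbmu^\star)\le S(\bbf_\nu^\star)$, so lower-bounding $d_\nu(\bbmu^\star)$ by $S(\bbf^\star)+\nu\sum_{(i,j)}\mu_{ij}^\star$ yields only $S(\bbf_\nu^\star)\ge S(\bbf^\star)+\nu\sum_{(i,j)}\mu_{ij}^\star$ — a \emph{lower} bound on the tightened optimum, which is the standard subgradient inequality for the perturbation function at $\mathbf{0}$ and is useless here. To upper-bound the value of the tightened problem you must use the dual optimum $\bbmu_\nu^\star$ \emph{of the tightened problem} (which exists because $\nu\le\xi/2<\xi$ keeps $\bbf^\dagger$ strictly feasible): strong duality for \eqref{eq:prob_zero_cons} gives $S(\bbf_\nu^\star)=d_\nu(\bbmu_\nu^\star)\le S(\bbf^\star)+\langle\bbmu_\nu^\star,\bbG(\bbf^\star)+\nu\mathbf{1}\rangle\le S(\bbf^\star)+\nu\langle\bbmu_\nu^\star,\mathbf{1}\rangle$, using feasibility $\bbG(\bbf^\star)\le 0$. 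This is exactly the route the paper takes.

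Your step (ii) is the right idea but must likewise be run on $\bbmu_\nu^\star$: plugging $\bbf^\dagger$ into $d_\nu$ gives slack $\xi-\nu$ (not $\xi$) per constraint, hence $\langle\bbmu_\nu^\star,\mathbf{1}\rangle\le\big(S(\bbf^\dagger)-S(\bbf_\nu^\star)\big)/(\xi-\nu)$. The paper then bounds the \emph{difference} $|S(\bbf^\dagger)-S(\bbf_\nu^\star)|\le 2VR_{\ccalB}(CX+\lambda R_{\ccalB})$ via Lipschitzness, the reproducing property, and Assumption \ref{as:fifth} — this avoids your need to control $\ell_i(0,y_i)$ or assume $S\ge 0$. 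Finally, the factor $4$ comes from $\xi-\nu\ge\xi/2$ (this is the actual role of the hypothesis $\nu\le\xi/2$), not from the doubly-defined-constraint convention you invoke.
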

%

\begin{proof}
Let $(\bbf^*,\bbmu^*)$ be the solution to \eqref{eq:main_prob} and  $(\bbf_{\nu}^*,\bbmu_{\nu}^*)$ be the solution to \eqref{eq:prob_zero_cons}. As $\nu \le \frac{\xi}{2} \le\xi$, there exists a strictly feasible primal solution $\bbf^{\dagger}$ such that  $G(\bbf^{\dagger})+\mathbf{1}\nu\le G(\bbf^{\dagger})+\mathbf{1}\xi$, where $\mathbf{1}$ denotes the vector of all ones and $G$ denotes the stacked vector of constraints as defined in the proof of Theorem \ref{thm:convergence}. Hence strong duality holds for \eqref{eq:prob_zero_cons}. Therefore using the definition of $S(\bbf)$ from {\eqref{eq:kernel_stoch_opt_global}}, we have 
\begin{align}
S(\bbf_\nu^*)&=\min_{\bbf} S(\bbf) + \langle \mu_{\nu}^*, G(\bbf)+ \mathbf{1}\nu \rangle\nonumber\\
& \le S(\bbf^*) + \langle \mu_{\nu}^*, G(\bbf^*)+ \mathbf{1}\nu \rangle\label{eq:boundgap_1}\\
& \le S(\bbf^*) + \nu \langle \mu_{\nu}^*, \mathbf{1}\rangle\label{eq:boundgap_2}
\end{align}
where the inequality in \eqref{eq:boundgap_1} comes from from the optimality of $\bbf_\nu^*$ and \eqref{eq:boundgap_2} comes from the fact that $G(\bbf^*)\le 0$. Next using Assumption \ref{as:fourth}, we have strict feasibility of $\bbf^\dagger$, so using \eqref{eq:boundgap_1} we can write:
 \begin{align}
 S(\bbf_\nu^*)& \le S(\bbf^\dagger) + \langle \mu_{\nu}^*, G(\bbf^\dagger)+ \mathbf{1}\nu \rangle\nonumber\\
 & =S(\bbf^\dagger) + \langle \mu_{\nu}^*, G(\bbf^\dagger)+ \mathbf{1}(\nu + \xi-\xi) \rangle\nonumber\\
 &=S(\bbf^\dagger) + \langle \mu_{\nu}^*, G(\bbf^\dagger)+ \mathbf{1} \xi\rangle + \langle \mu_{\nu}^*, \mathbf{1}(\nu -\xi) \rangle \nonumber\\
 &\le S(\bbf^\dagger) +(\nu -\xi) \langle \mu_{\nu}^*,\mathbf{1}\rangle.\label{eq:boundgap_3}
 \end{align}
 Thus from \eqref{eq:boundgap_3}, we can equivalently write,
 \begin{align}\label{eq:boundgap_3_1}
 \langle \mu_{\nu}^*,\mathbf{1}\rangle\le \frac{S(\bbf^\dagger)-S(\bbf_\nu^*)}{\xi-\nu}
\end{align}  
Now we upper bound the difference of $S(\bbf^\dagger)-S(\bbf_\nu^*)$. Using the definition of $S(\bbf)$, we write the difference of $S(\bbf^\dagger)-S(\bbf_\nu^*)$ as
\begin{align}\label{eq:boundgap_4}
S(\bbf^\dagger)\!-\!S(\bbf_\nu^*)=\mbE\sum_{i\in\ccalV}\!\!\big[\ell_i(f^\dagger_{i}\big(\bbx_{i,t}), y_{i,t}\big)\!-\!\ell_i(f_{i,\nu}^\star\big(\bbx_{i,t}), y_{i,t}\big)\!\big]\!+\frac{\lambda}{2}\sum_{i\in\ccalV}\!\Big(\|f^\dagger_{i} \|^2_{\ccalH}- \|f_{i,\nu}^\star \|^2_{\ccalH}\Big).
\end{align}
Next, we bound the  sequence in \eqref{eq:boundgap_4} as
\begin{align}\label{eq:boundgap_5}
|S(\bbf^\dagger)\!-\!S(\bbf_\nu^*)|&\!\leq\! \mbE\!\sum_{i\in\ccalV}\!\big[|\ell_i(f^\dagger_{i}\big(\bbx_{i,t}), y_{i,t}\big)\!-\!\ell_i(f_{i,\nu}^\star\big(\bbx_{i,t}), y_{i,t}\big)|\big]+\frac{\lambda}{2}\!\sum_{i\in\ccalV}\!|\|f^\dagger_{i} \|^2_{\ccalH}- \|f_{i,\nu}^\star \|^2_{\ccalH}|\nonumber\\
&\leq \mbE\sum_{i\in\ccalV} C|f^\dagger_{i}\big(\bbx_{i,t})-f_{i,\nu}^\star\big(\bbx_{i,t})|+\frac{\lambda}{2}\!\sum_{i\in\ccalV}\!|\|f^\dagger_{i} \|^2_{\ccalH}- \|f_{i,\nu}^\star \|^2_{\ccalH}|,
\end{align}
where using triangle inequality we write the first inequality and then using Assumption \eqref{as:second} of Lipschitz-continuity condition we write the second inequality. Further, using reproducing property of $\kappa$ and Cauchy-Schwartz inequality, we simplify  $|f^\dagger_{i}\big(\bbx_{i,t})-f_{i,\nu}^\star\big(\bbx_{i,t})|$ in \eqref{eq:boundgap_5} as
\begin{align}\label{eq:boundgap_6}
|f^\dagger_{i}\big(\bbx_{i,t})-f_{i,\nu}^\star\big(\bbx_{i,t})|&=|\langle f^\dagger_{i}-f_{i,\nu}^\star,\kappa(\bbx_{i,t},\cdot)\rangle| \leq \|f^\dagger_{i}-f_{i,\nu}^\star\|_{\ccalH}\cdot \|\kappa(\bbx_{i,t},\cdot)\|_{\ccalH}\leq {2}R_\ccalB X
\end{align} 
where the last inequality comes from Assumption \ref{as:first} and \ref{as:fifth}. 
Now, we consider the $|\|f^\dagger_{i} \|^2_{\ccalH}- \|f_{i,\nu}^\star \|^2_{\ccalH}|$ present in the right-hand side of \eqref{eq:boundgap_5},
\begin{align}\label{eq:boundgap_7}
&|\|f^\dagger_{i} \|^2_{\ccalH}- \|f_{i,\nu}^\star \|^2_{\ccalH}|\leq \|f^\dagger_{i}-f_{i,\nu}^\star\|_{\ccalH}\cdot \|f^\dagger_{i}+f_{i,\nu}^\star\|_{\ccalH}\!\leq\! 4R_{\ccalB}^2.
\end{align}
Substituting \eqref{eq:boundgap_6} and \eqref{eq:boundgap_7} in \eqref{eq:boundgap_5}, we obtain
\begin{align}\label{eq:boundgap_8}
|S(\bbf^\dagger)-S(\bbf_\nu^*)| \leq {2}VCR_\ccalB X+{2}V\lambda R_{\ccalB}^2
={2}VR_{\ccalB}(C X+\lambda R_{\ccalB}).
\end{align}
Now using \eqref{eq:boundgap_8}, we rewrite \eqref{eq:boundgap_3_1} as
\begin{align}\label{eq:boundgap_9}
 \langle \mu_{\nu}^*,\mathbf{1}\rangle\le \frac{S(\bbf^\dagger)-S(\bbf_\nu^*)}{\xi-\nu}\le \frac{{2}VR_{\ccalB}(C X+\lambda R_{\ccalB})}{\xi-\nu}\le \frac{{4}VR_{\ccalB}(C X+\lambda R_{\ccalB})}{\xi}.
\end{align}
Finally, we use \eqref{eq:boundgap_9} in \eqref{eq:boundgap_2} and get the required result:
\begin{align}
S(\bbf_\nu^*)-S(\bbf^*)\le \frac{{4}VR_{\ccalB}(C X+\lambda R_{\ccalB})}{\xi}\nu.
\end{align}
\end{proof}
The importance of Lemma \ref{thm:bound_gap} is that it establishes the fact that the gap between the solutions of the problem \eqref{eq:main_prob}
and \eqref{eq:prob_zero_cons} is $\ccalO(\nu)$.

\section{Statement and Proof of Lemma \ref{lemma:bound_primal_dual_grad}}
\label{app:bound_primal_dual_gradient}

We  bound the primal and dual stochastic gradients used for \eqref{eq:projection_hat} and \eqref{eq:dualupdate_edge}, respectively in the following lemma.
%
\begin{lemma}\label{lemma:bound_primal_dual_grad}
Using Assumptions \ref{as:first}-\ref{as:fifth}, the mean-square-magnitude of the primal and dual gradients of the stochastic augmented Lagrangian $\hat{\ccalL}_t(\bbf,\bbmu)$ defined in \eqref{eq:stochastic_approx} are upper-bounded as
\begin{align}\label{eq:lemma1_final}
\!\!\!\!\mathbb{E}[\| \nabla_\bbf\hat{\ccalL}_t(\bbf,\bbmu)\|^2_{\ccalH}]&\leq  4V X^2 C^2 + 4V X^2 L_h^2 {E} \|\bbmu\|^2+2V \lambda^2 R_{\ccalB}^2\\
\!\!\!\!\mathbb{E}\Big[\| \nabla_{\bbmu}\hat{\ccalL}_t(\bbf,\bbmu)\|^2_{\ccalH}\Big]\!\! &\leq  {E}\Big(\!(2K_1\!\!+\!2L_h^2X^2 R_{\ccalB}^2)\!+\! 2\delta^2\eta^2\|\bbmu\|^2\! \Big)\label{eq:lemma1_2_final}
\end{align}
for some $0<K_1<\infty$.
\end{lemma}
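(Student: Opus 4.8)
The plan is to bound the two stochastic gradients directly from their explicit expressions, using only the Lipschitz/boundedness assumptions and the uniform bound on kernel evaluations.

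\textbf{Primal gradient.} First I would recall the explicit form of the functional stochastic gradient from \eqref{eq:lagg_derv}: for each agent $i$,
\begin{align*}
\nabla_{f_i}\hat{\ccalL}_{t}(\bbf,\bbmu)=\ell_i'(f_i(\bbx_{i,t}),y_{i,t})\,\kappa(\bbx_{i,t},\cdot)+\lambda f_i+\sum_{j\in n_i}\mu_{ij}h'_{ij}(f_i(\bbx_{i,t}),f_j(\bbx_{i,t}))\,\kappa(\bbx_{i,t},\cdot).
\end{align*}
Using the triangle inequality in $\ccalH$ and the fact that $\|\kappa(\bbx_{i,t},\cdot)\|_{\ccalH}=\sqrt{\kappa(\bbx_{i,t},\bbx_{i,t})}\le X$ (Assumption \ref{as:first}), together with $|\ell_i'|\le C$ (from the $C$-Lipschitz property, Assumption \ref{as:second}) and $|h'_{ij}|\le L_h$ (Assumption \ref{as:third}), I would get
\begin{align*}
\|\nabla_{f_i}\hat{\ccalL}_{t}(\bbf,\bbmu)\|_{\ccalH}\le CX+\lambda R_{\ccalB}+L_h X\sum_{j\in n_i}|\mu_{ij}|,
\end{align*}
where $\|f_i\|_{\ccalH}\le R_{\ccalB}$ comes from Assumption \ref{as:fifth}. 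Then I would square, apply $(a+b+c)^2\le 3a^2+3b^2+3c^2$ (or, more carefully, group terms to match the stated constants $4VX^2C^2$, $4VX^2L_h^2E\|\bbmu\|^2$, $2V\lambda^2R_{\ccalB}^2$ — likely splitting the $\kappa$-multiplied terms as $(|\ell_i'|+\sum_j\mu_{ij}|h'_{ij}|)\kappa$ and bounding $(a+b)^2\le 2a^2+2b^2$, then using $\bigl(\sum_{j\in n_i}|\mu_{ij}|\bigr)^2\le |n_i|\sum_j|\mu_{ij}|^2\le E\|\bbmu\|^2$ by Cauchy–Schwarz and $|n_i|\le E$). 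Summing the stacked gradient norm $\|\nabla_\bbf\hat\ccalL_t\|_{\ccalH}^2=\sum_{i}\|\nabla_{f_i}\hat\ccalL_t\|_{\ccalH}^2$ over the $V$ agents and taking expectation then yields \eqref{eq:lemma1_final}.

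\textbf{Dual gradient.} From \eqref{eq:dual_gradient_seperability}, the per-edge dual gradient is $\nabla_{\mu_{ij}}\hat{\ccalL}_t=h_{ij}(f_{i,t}(\bbx_{i,t}),f_{j,t}(\bbx_{i,t}))-\gamma_{ij}+\nu-\delta\eta\mu_{ij}$. I would write $h_{ij}(z,\cdot)=h_{ij}(z,\cdot)-h_{ij}(0,\cdot)+h_{ij}(0,\cdot)$, bound the difference by $L_h|z|\le L_h X R_{\ccalB}$ (using $|f_{i,t}(\bbx_{i,t})|=|\langle f_{i,t},\kappa(\bbx_{i,t},\cdot)\rangle|\le R_{\ccalB}X$), and absorb the residual constant term $h_{ij}(0,\cdot)-\gamma_{ij}+\nu$ into a uniform constant, so that $\sqrt{2K_1}$ dominates $|h_{ij}(0,\cdot)-\gamma_{ij}+\nu|$ for all edges (here I use that $\ccalY$ is compact, Assumption \ref{as:first}, so $h_{ij}(0,y)$ is uniformly bounded). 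Squaring via $(a+b+c)^2\le$ suitable expansion, summing over the $E$ edges, and taking expectation gives \eqref{eq:lemma1_2_final}.

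\textbf{Main obstacle.} The calculations are routine; the only genuine care needed is (i) matching the precise numerical constants in the statement — in particular getting $E$ (not $M$) as written in \eqref{eq:lemma1_final}--\eqref{eq:lemma1_2_final}, which requires bounding $\bigl(\sum_{j\in n_i}|\mu_{ij}|\bigr)^2\le E\|\bbmu\|^2$ — and (ii) justifying that $K_1<\infty$, i.e.\ that the constant arising from $h_{ij}(0,\cdot)-\gamma_{ij}+\nu$ is finite uniformly over edges, which follows from compactness of $\ccalY$ and finiteness of $\gamma_{ij},\nu$. Everything else is triangle inequality, Cauchy–Schwarz, and the reproducing property.
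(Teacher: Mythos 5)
Your proposal is correct and follows essentially the same route as the paper: explicit gradient expressions, sum-of-squares and Cauchy--Schwarz with $\|\kappa(\bbx_{i,t},\cdot)\|_{\ccalH}\le X$, $|\ell_i'|\le C$, $|h_{ij}'|\le L_h$, $\|f_i\|_{\ccalH}\le R_{\ccalB}$, and $\bigl(\sum_{j\in n_i}|\mu_{ij}|\bigr)^2\le |n_i|\sum_j|\mu_{ij}|^2\le E\|\bbmu\|^2$; your dual-gradient decomposition around $h_{ij}(0,\cdot)$ is in fact more explicit than the paper about where $K_1$ comes from. One tiny correction: the second argument of $h_{ij}$ is $f_j(\bbx_{i,t})$, not $y$, so the uniform bound on $h_{ij}(0,\cdot)$ follows from $|f_j(\bbx_{i,t})|\le XR_{\ccalB}$ (Assumptions \ref{as:first} and \ref{as:fifth}) rather than compactness of $\ccalY$ — this does not affect the conclusion.
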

\begin{proof}
In this proof for any $(\bbf,\bbmu) \in \ccalH^V\times \mbR^{E}_+ $ we upper bound the mean-square-magnitude of primal gradient as
\begin{align}\label{eq:lemma1_1}
\mathbb{E}[\| \nabla_\bbf\hat{\ccalL}_t(\bbf,\bbmu)\|^2_{\ccalH}]&=\mathbb{E}[\|vec(\nabla_{fi}\hat{\ccalL}_t(\bbf,\bbmu))\|^2_{\ccalH}]\le V \max_{i\in \ccalV}\mathbb{E}[\|\nabla_{fi}\hat{\ccalL}_t(\bbf,\bbmu)\|^2_{\ccalH}],
\end{align}
%
where for the first equality we have used the fact that the functional gradient is a concatenation of functional gradients associated with each agent. The second inequality is obtained by considering the worst case estimate across the network. In the right-hand side of \eqref{eq:lemma1_1} we substitute the value of $\nabla_{fi}\hat{\ccalL}_t(\bbf,\bbmu)$ from \eqref{eq:lagg_derv} to obtain,
\begin{align}\label{eq:delta_f_bound1_temp}
\mathbb{E}[\| \nabla_\bbf\hat{\ccalL}_t(\bbf,\bbmu)\|^2_{\ccalH}]
&\le V \!\max_{i\in \ccalV}\mathbb{E}\big[\|\!\big[\ell_i'(f_{i}(\bbx_{i,t}),y_{i,t})\!+\!\!\sum_{j\in n_i}\!\mu_{ij}h'_{ij}(f_{i}(\bbx_{i,t}),\!f_{j}(\bbx_{i,t}))\big]\! \kappa(\bbx_{i,t},\cdot)\!+\!\lambda f_{i}\|^2_{\ccalH}\big]\\
&\le V \max_{i\in \ccalV} \mathbb{E}\big[2\|\big[\ell_i'(f_{i}(\bbx_{i,t}),y_{i,t})+\sum_{j\in n_i}\mu_{ij}h'_{ij}(f_{i}(\bbx_{i,t}),f_{j}(\bbx_{i,t}))\big] \kappa(\bbx_{i,t},\cdot)\|_{\ccalH}^2\big]\!+2V\lambda^2 \|f_{i}\|_{\ccalH}^2.\nonumber
\end{align}
In \eqref{eq:delta_f_bound1_temp}, we have used the fact that $\|a+b\|_{\ccalH}^2\le 2\cdot(\|a\|_{\ccalH}^2+\|b\|_{\ccalH}^2)$ for any $a, b \in \ccalH$, i.e., the sum of squares inequality. Next we again use the sum of squares inequality for the first bracketed term in the right hand side of \eqref{eq:delta_f_bound1_temp} and also used Assumption \ref{as:fifth} to upper bound $\|f_{i}\|_{\ccalH}^2$  by $R_\ccalB^2$ and get,
\begin{align}\label{eq:delta_f_bound1}
\mathbb{E}[\| \nabla_\bbf\hat{\ccalL}_t(\bbf,\bbmu)\|^2_{\ccalH}]\le V \max_{i\in \ccalV} \mathbb{E}\big[4\|\ell_i'(f_{i}(\bbx_{i,t}),y_{i,t})\kappa(\bbx_{i,t},\cdot)\|^2+4\|\sum_{j\in n_i}\mu_{ij}h'_{ij}(f_{i}(\bbx_{i,t}),f_{j}(\bbx_{i,t})) \kappa(\bbx_{i,t},\cdot)\|_{\ccalH}^2\big]+c(\lambda),
\end{align}
where $c(\lambda):=\!\!2V\!\lambda^2\! \cdot\! R_{\ccalB}^2$. Using Cauchy-Schwartz inequality, the first term on the right-hand side of \eqref{eq:delta_f_bound1} can be written as $$\|\ell_i'(f_{i}(\bbx_{i,t}),y_{i,t})\kappa(\bbx_{i,t},\cdot)\|^2\le \|\ell_i'(f_{i}(\bbx_{i,t}),y_{i,t})\|^2 \|\kappa(\bbx_{i,t},\cdot)\|^2.$$ Then using Assumptions \ref{as:first} and \ref{as:second}, we bound $\|\ell_i'(f_{i}(\bbx_{i,t}),y_{i,t})\|^2 $ by $C^2$ and  $\|\kappa(\bbx_{i,t},\cdot)\|^2$ by $X^2$. Similarly we use Cauchy-Schwartz inequality for the second term in \eqref{eq:delta_f_bound1} and bound $\|\kappa(\bbx_{i,t},\cdot)\|^2$ by $X^2$. Now using these, \eqref{eq:delta_f_bound1} can be written as,
\begin{align}\label{eq:lemma1_final_temp1}
\mathbb{E}[\| \nabla_\bbf\hat{\ccalL}_t(\bbf,\bbmu)\|^2_{\ccalH}]\le \!4V' C^2 + 4V' \|\sum_{j\in n_i}\mu_{ij}h'_{ij}(f_{i}(\bbx_{i,t}),f_{j}(\bbx_{i,t})) \|_{\ccalH}^2+c(\lambda),
\end{align}
where $V':=VX^2$. Using Assumption \ref{as:third}, we bound $h'_{ij}(f_{i}(\bbx_{i,t}),f_{j}(\bbx_{i,t}))$ present in the second term on the right-hand side of \eqref{eq:lemma1_final_temp1} by $L_h$ and then taking the constant $L_h$ out of the summation, we get
\begin{align}\label{eq:lemma1_final_temp2}
\mathbb{E}[\| \nabla_\bbf\hat{\ccalL}_t(\bbf,\bbmu)\|^2_{\ccalH}]\le 4V' C^2+ 4V'L_h^2 \|\sum_{j=1}^{ |n_i|}\mu_{ij} \|^2+c(\lambda).
\end{align}
Here, $|n_i|$  denotes the number of neighborhood nodes of agent $i$. Then we have used the fact $\|\sum_{j=1}^{ |n_i|}\mu_{ij} \|^2\le {|n_i|} \sum_{j=1}^{ |n_i|}|\mu_{ij}|^2$ and got
\begin{align}\label{eq:lemma1_final_temp3}
\mathbb{E}[\| \nabla_\bbf\hat{\ccalL}_t(\bbf,\bbmu)\|^2_{\ccalH}]\le 4V' C^2+ 4V' L_h^2 {|n_i|} \sum_{j=1}^{ |n_i|}|\mu_{ij}|^2+c(\lambda).
\end{align}
Next we upper bound ${ |n_i|}$ and $\sum_{j=1}^{ |n_i|}|\mu_{ij}|^2$ by ${E}$ and $\|\bbmu\|^2$ and write \eqref{eq:lemma1_final_temp3} as 
%
\begin{align}\label{eq:lemma1_final}
\mathbb{E}[\| \nabla_\bbf\hat{\ccalL}_t(\bbf,\bbmu)\|^2_{\ccalH}]\le 4V' C^2 + 4V' L_h^2 {E} \|\bbmu\|^2+c(\lambda).
\end{align}
Thus \eqref{eq:lemma1_final} which establishes an the upper bound on $\mathbb{E}[\| \nabla_\bbf\hat{L}_t(\bbf,\bbmu)\|^2_{\ccalH}]$ is valid. 
 
With this in hand, we now shift focus to deriving a similar upper-bound on the magnitude of the dual stochastic gradient of the Lagrangian $\mathbb{E}\Big[\| \nabla_{\bbmu}\hat{\ccalL}_t(\bbf,\bbmu)\|^2_{\ccalH}\Big]$ as 
\begin{align}\label{eq:delta_mu_temp1}
\mathbb{E}\Big[\| \nabla_{\bbmu}\hat{\ccalL}_t(\bbf,\bbmu)\|^2_{\ccalH}\Big]
&=\mathbb{E}\|\text{vec}(h_{ij}(f_{i}(\bbx_{i,t}),f_{j}(\bbx_{i,t}))-\gamma_{ij}+\nu-\delta\eta\mu_{ij})\|_{\ccalH}^2\nonumber\\
&\le {E} \max_{(i,j)\in\ccalE}\mathbb{E}\|h_{ij}(f_{i}(\bbx_{i,t}),f_{j}(\bbx_{i,t}))-\gamma_{ij}+\nu-\delta\eta\mu_{ij}\|_{\ccalH}^2\nonumber\\
&\le {E} \max_{(i,j)\in\ccalE}\mathbb{E}\|h_{ij}(f_{i}(\bbx_{i,t}),f_{j}(\bbx_{i,t}))+\nu-\delta\eta\mu_{ij}\|_{\ccalH}^2.
\end{align}
In the first equality we write the concatenated version of the dual stochastic gradient associated with each agent, whereas the second inequality is obtained by considering the worst case bound. In the third inequality, we use the fact $|a-b-c|^2 \le |a-c|^2$ owing to the fact that the right hand side of the inequality is a scalar. Next, applying $\|a+b\|_{\ccalH}^2\le 2\cdot(\|a\|_{\ccalH}^2+\|b\|_{\ccalH}^2)$ for any $a, b \in \ccalH$, we get 
\begin{align}\label{eq:delta_mu_temp2}
\mathbb{E}\Big[\| \nabla_{\bbmu}\hat{\ccalL}_t(\bbf,\bbmu)\|^2_{\ccalH}\Big]&\le {E}\big(2\mathbb{E}\|h_{ij}(f_{i}(\bbx_{i,t}),f_{j}(\bbx_{i,t}))+\nu\|^2_{\ccalH}+ 2\delta^2\eta^2|\mu_{ij}|^2\big).
\end{align}
Here we have ignored the $\nu^2$ term as $\nu < 1$ and can be subsumed within the first term.
Then we bound the first term in \eqref{eq:delta_mu_temp2} using Assumption \ref{as:third} and the second term is upper bounded by $\|\bbmu\|^2$
\begin{align}\label{eq:delta_mu_temp3}
&\mathbb{E}\Big[\| \nabla_{\bbmu}\hat{\ccalL}_t(\bbf,\bbmu)\|^2_{\ccalH}\Big]\le {E}\Big(2\big(K_1+L_h^2\mathbb{E}(|f_{i}(\bbx_{i,t})|^2)\big)+ 2\delta^2\eta^2\|\bbmu\|^2 \Big).
\end{align}
Next, we use $|f_{i}(\bbx_{i,t})|^2=|\langle f_{i},\kappa(\bbx_{i,t},\cdot)\rangle_\ccalH|^2\le \|f_{i}\|_\ccalH^2\cdot\|\kappa(\bbx_{i,t},\cdot)\|_\ccalH^2$ and then we have upper bounded $\|f_{i}\|_\ccalH^2$ and $\|\kappa(\bbx_{i,t},\cdot)\|_\ccalH^2$ by $R_{\ccalB}^2$ and $X^2$, and we obtain
\begin{align}\label{eq:lemma1_2_final}
\mathbb{E}&\Big[\| \nabla_{\bbmu}\hat{\ccalL}_t(\bbf,\bbmu)\|^2_{\ccalH}\Big]\le {E}((2K_1+2L_h^2X^2\cdot R_{\ccalB}^2)+ 2\delta^2\eta^2\|\bbmu\|^2 ).
\end{align}
%
\end{proof}

\vspace{-4mm}

\section{Statement and Proof of Lemma \ref{lemma:diff_of_grad}}
\label{app:bound_grad_diff_func_proj_func}
The following lemma bounds the difference of projected stochastic functional gradient and un-projected stochastic functional gradient.
  \vspace{-1mm}
 \begin{lemma}\label{lemma:diff_of_grad}
The difference between the stochastic functional gradient defined by ${\nabla}_{\bbf}\hat{\ccalL}_{t}(\bbf_{t},\bbmu_{t})$ and projected stochastic functional gradient $\tilde{\nabla}_{\bbf}\hat{\ccalL}_{t}(\bbf_{t},\bbmu_{t})$, is bounded as
 \begin{align}\label{eq:diff_of_grad}
 \|{\nabla}_{\bbf}\hat{\ccalL}_{t}(\bbf_{t},\bbmu_{t})-\tilde{\nabla}_{\bbf}\hat{\ccalL}_{t}(\bbf_{t},\bbmu_{t})\|_{\ccalH}\le \frac{\sqrt{V}\eps}{\eta}
 \end{align}
 for all $t>0$. Here, $\eta>0$ is the algorithm step-size and $\eps>0$ is the error tolerance parameter of the KOMP.
 \end{lemma}
%
\begin{proof}
 Considering the squared-Hilbert- norm difference of the left hand side of \eqref{eq:diff_of_grad} 
 \begin{subequations}
 \begin{align}
\|{\nabla}_{\bbf}\hat{\ccalL}_{t}(\bbf_{t},\bbmu_{t})-\tilde{\nabla}_{\bbf}\hat{\ccalL}_{t}(\bbf_{t},\bbmu_{t})\|_{\ccalH}^2
&=\frac{1}{\eta^2}\|\eta{\nabla}_{\bbf}\hat{\ccalL}_{t}(\bbf_{t},\bbmu_{t})-\eta\tilde{\nabla}_{\bbf}\hat{\ccalL}_{t}(\bbf_{t},\bbmu_{t})\|_{\ccalH}^2\label{eq:proof_diff1}\\
 &=\frac{1}{\eta^2}\|\eta{\nabla}_{\bbf}\hat{\ccalL}_{t}(\bbf_{t},\bbmu_{t})+\bbf_{t+1}-\bbf_t\|^2\label{eq:proof_diff2}.
 \end{align}
 In \eqref{eq:proof_diff2}, we used \eqref{eq:projected_func_update} for the second term on the right hand side of \eqref{eq:proof_diff1}. we re-arrange the terms in  \eqref{eq:proof_diff2} and  then, we use $\bbf_t-\eta\nabla_{\bbf}\hat{\ccalL}_{t}(\bbf_{t},\bbmu_{t})$, which can easily by identified as $\tilde{\bbf}_{t+1}$ given in \eqref{eq:stacked_sgd_tilde} and obtain  
 \begin{align}
 \|{\nabla}_{\bbf}\hat{\ccalL}_{t}(\bbf_{t},\bbmu_{t})-\tilde{\nabla}_{\bbf}\hat{\ccalL}_{t}(\bbf_{t},\bbmu_{t})\|_{\ccalH}^2
 &=\frac{1}{\eta^2}\|\bbf_{t+1}-\big(\bbf_t-\eta{\nabla}_{\bbf}\hat{\ccalL}_{t}(\bbf_{t},\bbmu_{t})\big)\|^2 \nonumber\\
 &=\frac{1}{\eta^2}\|\bbf_{t+1}-\tilde{\bbf}_{t+1}\|^2\label{eq:proof_diff3}\\
 &=\frac{1}{\eta^2}\sum_{i=1}^V\|f_{i,t+1}-\tilde{f}_{i,t+1}\|^2\le\frac{1}{\eta^2}V\eps^2\label{eq:proof_diff4}.
\end{align}  
\end{subequations}
 In \eqref{eq:proof_diff3} we used the stacked version of $\tilde{f}_{i,t+1}$ to substitute $\tilde{\bbf}_{t+1}$ in place of $\bbf_t-\eta\tilde{\nabla}_{\bbf}\hat{\ccalL}_{t}(\bbf_{t},\bbmu_{t})$. In \eqref{eq:proof_diff4} we used the error tolerance parameter of the KOMP update. Then taking the square root of \eqref{eq:proof_diff4} gives the inequality stated in \eqref{eq:diff_of_grad} and concludes the proof .
 \end{proof}
\section{Definition and Proof of Lemma \ref{lemma:inst_lagrang_diff}}
\label{app:bound_inst_lag_diff}
Next, Lemma \ref{lemma:inst_lagrang_diff}  characterizes the instantaneous Lagrangian difference $\hat{\ccalL}_{t}(\bbf_t,\bbmu)-\hat{\ccalL}_{t}(\bbf,\bbmu_t)$.
 %
 \begin{lemma}\label{lemma:inst_lagrang_diff}
 Under Assumptions \ref{as:first}-\ref{as:fifth} and the primal and dual updates generated from Algorithm \ref{alg:soldd}, the instantaneous Lagrangian difference satisfies the following decrement property 
 \begin{align}\label{eq:inst_lagrang_diff}
&\hat{\ccalL}_t(\bbf_t,\bbmu)-\hat{\ccalL}_t(\bbf,\bbmu_t)\nonumber\\
&\leq \frac{1}{2\eta}\!\big(\|\bbf_t\!-\!\bbf\|_{\ccalH}^2-\|\bbf_{t+1}\!\!-\!\!\bbf\|_{\ccalH}^2+\|\bbmu_{t}\!-\!\bbmu\|^2-\|\bbmu_{t+1}\!-\!\bbmu\|^2\big)\nonumber\\
&\quad+\! \frac{\eta}{2}\big( 2\|{\nabla}_{\bbf}\hat{\ccalL}_{t}(\bbf_{t},\bbmu_{t})\|_{\ccalH}^2+\|\nabla_{\bbmu}\hat{L}_{t}(\bbf_t,\bbmu_t)\|^2\big)+\frac{\sqrt{V}\eps}{\eta}\|\bbf_t-\bbf\|_{\ccalH}+\frac{V\eps^2}{\eta}. 
 \end{align}
 \end{lemma}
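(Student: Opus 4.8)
The plan is to exploit the primal-dual update structure of Algorithm \ref{alg:soldd} together with convexity of the stochastic augmented Lagrangian $\hat{\ccalL}_t$ in $\bbf$ and concavity in $\bbmu$. First I would expand the squared-norm distance $\|\bbf_{t+1}-\bbf\|_{\ccalH}^2$ using the projected update $\bbf_{t+1}=\bbf_t-\eta\tilde{\nabla}_{\bbf}\hat{\ccalL}_t(\bbf_t,\bbmu_t)$ from \eqref{eq:projected_func_update}, yielding
\[
\|\bbf_{t+1}-\bbf\|_{\ccalH}^2=\|\bbf_t-\bbf\|_{\ccalH}^2-2\eta\ip{\tilde{\nabla}_{\bbf}\hat{\ccalL}_t(\bbf_t,\bbmu_t),\bbf_t-\bbf}+\eta^2\|\tilde{\nabla}_{\bbf}\hat{\ccalL}_t(\bbf_t,\bbmu_t)\|_{\ccalH}^2,
\]
so that the inner-product term $\ip{\tilde{\nabla}_{\bbf}\hat{\ccalL}_t(\bbf_t,\bbmu_t),\bbf_t-\bbf}$ can be isolated and bounded by $\tfrac{1}{2\eta}(\|\bbf_t-\bbf\|_{\ccalH}^2-\|\bbf_{t+1}-\bbf\|_{\ccalH}^2)+\tfrac{\eta}{2}\|\tilde{\nabla}_{\bbf}\hat{\ccalL}_t(\bbf_t,\bbmu_t)\|_{\ccalH}^2$. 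The key step is to replace the projected gradient by the true stochastic functional gradient: writing $\tilde{\nabla}_{\bbf}\hat{\ccalL}_t={\nabla}_{\bbf}\hat{\ccalL}_t+(\tilde{\nabla}_{\bbf}\hat{\ccalL}_t-{\nabla}_{\bbf}\hat{\ccalL}_t)$ and applying Cauchy--Schwarz to the error term gives an extra contribution $\|{\nabla}_{\bbf}\hat{\ccalL}_t-\tilde{\nabla}_{\bbf}\hat{\ccalL}_t\|_{\ccalH}\|\bbf_t-\bbf\|_{\ccalH}$, which by Lemma \ref{lemma:diff_of_grad} is at most $\tfrac{\sqrt{V}\eps}{\eta}\|\bbf_t-\bbf\|_{\ccalH}$; bounding $\|\tilde{\nabla}_{\bbf}\hat{\ccalL}_t\|_{\ccalH}^2\le 2\|{\nabla}_{\bbf}\hat{\ccalL}_t\|_{\ccalH}^2+2\|{\nabla}_{\bbf}\hat{\ccalL}_t-\tilde{\nabla}_{\bbf}\hat{\ccalL}_t\|_{\ccalH}^2\le 2\|{\nabla}_{\bbf}\hat{\ccalL}_t\|_{\ccalH}^2+\tfrac{2V\eps^2}{\eta^2}$ then accounts for the $\frac{V\eps^2}{\eta}$ term after multiplying by $\tfrac{\eta}{2}$.

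Next I would handle the dual side analogously. From the dual update \eqref{eq:dualupdate}, $\bbmu_{t+1}=[\bbmu_t+\eta\nabla_{\bbmu}\hat{\ccalL}_t(\bbf_t,\bbmu_t)]_+$, and using nonexpansiveness of the projection onto $\reals^E_+$ (since $\bbmu\ge 0$) one gets $\|\bbmu_{t+1}-\bbmu\|^2\le\|\bbmu_t-\bbmu\|^2+2\eta\ip{\nabla_{\bbmu}\hat{\ccalL}_t(\bbf_t,\bbmu_t),\bbmu_t-\bbmu}+\eta^2\|\nabla_{\bbmu}\hat{\ccalL}_t(\bbf_t,\bbmu_t)\|^2$. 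Wait --- because we are \emph{ascending} in $\bbmu$, I would instead isolate $\ip{\nabla_{\bbmu}\hat{\ccalL}_t(\bbf_t,\bbmu_t),\bbmu-\bbmu_t}\le\tfrac{1}{2\eta}(\|\bbmu_t-\bbmu\|^2-\|\bbmu_{t+1}-\bbmu\|^2)+\tfrac{\eta}{2}\|\nabla_{\bbmu}\hat{\ccalL}_t(\bbf_t,\bbmu_t)\|^2$. Then by convexity-concavity of $\hat{\ccalL}_t$ (Assumptions \ref{as:second}, \ref{as:third} give convexity of $\hat{\ccalL}_t$ in $\bbf$, and the dual regularizer keeps it concave in $\bbmu$), I bound the instantaneous Lagrangian gap by first-order terms:
\[
\hat{\ccalL}_t(\bbf_t,\bbmu)-\hat{\ccalL}_t(\bbf,\bbmu_t)=\big[\hat{\ccalL}_t(\bbf_t,\bbmu)-\hat{\ccalL}_t(\bbf_t,\bbmu_t)\big]+\big[\hat{\ccalL}_t(\bbf_t,\bbmu_t)-\hat{\ccalL}_t(\bbf,\bbmu_t)\big]\le\ip{\nabla_{\bbmu}\hat{\ccalL}_t(\bbf_t,\bbmu_t),\bbmu-\bbmu_t}+\ip{\nabla_{\bbf}\hat{\ccalL}_t(\bbf_t,\bbmu_t),\bbf_t-\bbf}.
\]

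Finally I would add the two isolated inner-product bounds together, substituting the Lemma \ref{lemma:diff_of_grad} estimate and the gradient-splitting bound from the primal step, to arrive exactly at \eqref{eq:inst_lagrang_diff}. The main obstacle I anticipate is handling the projection operator $\ccalP_{\ccalH_{\bbD_{i,t+1}}}$ carefully: the projected functional gradient $\tilde{\nabla}_{\bbf}\hat{\ccalL}_t$ need not equal the gradient of $\hat{\ccalL}_t$, so one cannot directly invoke a descent inequality for the iterate $\bbf_{t+1}$. The device that saves the argument is to treat $\bbf_{t+1}=\bbf_t-\eta\tilde{\nabla}_{\bbf}\hat{\ccalL}_t(\bbf_t,\bbmu_t)$ as an \emph{exact} step along $\tilde{\nabla}_{\bbf}$, expand the distance to the comparator $\bbf$, and only then swap $\tilde{\nabla}_{\bbf}$ for ${\nabla}_{\bbf}$ paying the controlled error $\sqrt{V}\eps/\eta$ via Lemma \ref{lemma:diff_of_grad}; a secondary subtlety is ensuring the comparator $\bbf$ and the iterates lie in the ball $\ccalB$ so that all cross terms are finite, which is exactly Assumption \ref{as:fifth}.
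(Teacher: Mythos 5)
Your proposal is correct and follows essentially the same route as the paper's proof: expand $\|\bbf_{t+1}-\bbf\|_{\ccalH}^2$ along the exact projected step, swap $\tilde{\nabla}_{\bbf}\hat{\ccalL}_t$ for ${\nabla}_{\bbf}\hat{\ccalL}_t$ via Cauchy--Schwarz and Lemma \ref{lemma:diff_of_grad}, use non-expansiveness of $[\cdot]_+$ for the dual step, and combine the two inner-product bounds with convexity in $\bbf$ and concavity in $\bbmu$. The constants and error terms you track ($\sqrt{V}\eps/\eta$, $V\eps^2/\eta$, the factor $2$ on the primal gradient norm) all land exactly where the paper puts them.
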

 \begin{proof}
 Considering the squared hilbert norm of the difference between the iterate $\bbf_{t+1}$ and any feasible point $\bbf$ with each individual $f_i$ in the ball $\ccalB$ and exoanding it using the \eqref{eq:projected_func_update}, we get
 \begin{align}\label{eq:diff_ft+1_f_1}
 \|\bbf_{t+1}-\bbf\|_{\ccalH}^2=\|\bbf_t-\eta \tilde{\nabla}_{\bbf}\hat{\ccalL}_{t}(\bbf_{t},\bbmu_{t})-\bbf\|_{\ccalH}^2
 &=\!\|\bbf_t\!-\!\bbf\|_{\ccalH}^2-2\eta\langle \bbf_t-\bbf,\tilde{\nabla}_{\bbf}\hat{\ccalL}_{t}(\bbf_{t},\bbmu_{t})\rangle +\eta^2\|\tilde{\nabla}_{\bbf}\hat{\ccalL}_{t}(\bbf_{t},\bbmu_{t})\|_{\ccalH}^2\nonumber\\
&=\|\bbf_t-\bbf\|_{\ccalH}^2+2\eta\langle \bbf_t-\bbf,{\nabla}_{\bbf}\hat{\ccalL}_{t}(\bbf_{t},\bbmu_{t})-\tilde{\nabla}_{\bbf}\hat{\ccalL}_{t}(\bbf_{t},\bbmu_{t})\rangle\nonumber\\
 \!\!&\quad\!-2\eta\langle \bbf_t-\bbf,{\nabla}_{\bbf}\hat{\ccalL}_{t}(\bbf_{t},\bbmu_{t})\rangle+\eta^2\|\tilde{\nabla}_{\bbf}\hat{\ccalL}_{t}(\bbf_{t},\bbmu_{t})\|_{\ccalH}^2
 \end{align}
 where we have added and subtracted $2\eta\langle \bbf_t-\bbf,{\nabla}_{\bbf}\hat{\ccalL}_{t}(\bbf_{t},\bbmu_{t})\rangle$ and gathered like terms on the right-hand side. Now to handle the second term on the right hand side of  \eqref{eq:diff_ft+1_f_1}, we use Cauchy Schwartz inequality along with the Lemma \ref{lemma:diff_of_grad} to replace the directional error associated with sparse projections with the functional difference defined by the KOMP stopping criterion:
 \begin{align}\label{diff_ft+1_f_2}
 &\langle \bbf_t-\bbf,{\nabla}_{\bbf}\hat{\ccalL}_{t}(\bbf_{t},\bbmu_{t})-\tilde{\nabla}_{\bbf}\hat{\ccalL}_{t}(\bbf_{t},\bbmu_{t})\rangle\le \|\bbf_t-\bbf\|_{\ccalH}\|{\nabla}_{\bbf}\hat{\ccalL}_{t}(\bbf_{t},\bbmu_{t}\!)-\tilde{\nabla}_{\bbf}\hat{\ccalL}_{t}(\bbf_{t},\bbmu_{t}\!)\|_{\ccalH}\le\frac{\sqrt{V}\eps}{\eta}\|\bbf_t-\bbf\|_{\ccalH}.
 \end{align}
 Now to bound the norm of $\tilde{\nabla}_{\bbf}\hat{\ccalL}_{t}(\bbf_{t},\bbmu_{t})$, the last term in the right hand side of  \eqref{eq:diff_ft+1_f_1}, we add and subtract ${\nabla}_{\bbf}\hat{\ccalL}_{t}(\bbf_{t},\bbmu_{t})$ and then use the identity $\|a+b\|_{\ccalH}^2\le 2.(\|a\|_{\ccalH}^2+\|b\|_{\ccalH}^2)$ and further use Lemma \ref{lemma:diff_of_grad} and finally get,
 \begin{align}\label{diff_ft+1_f_3}
 \|\tilde{\nabla}_{\bbf}\hat{\ccalL}_{t}(\bbf_{t},\bbmu_{t})\|_{\ccalH}^2\le 2\frac{V\eps^2}{\eta^2}+2\|{\nabla}_{\bbf}\hat{\ccalL}_{t}(\bbf_{t},\bbmu_{t})\|_{\ccalH}^2.
 \end{align}
 Now we substitute the expressions in \eqref{diff_ft+1_f_2} and \eqref{diff_ft+1_f_3} in for the second and fourth terms in \eqref{eq:diff_ft+1_f_1} which allows us to write
 \begin{align}
  \|\bbf_{t+1}-\bbf\|_{\ccalH}^2\le\|\bbf_t-\bbf\|_{\ccalH}^2+2\sqrt{V}\eps\|\bbf_t\!-\!\bbf\|_{\ccalH}-2\eta\langle \bbf_t-\bbf,{\nabla}_{\bbf}\hat{\ccalL}_{t}(\bbf_{t},\bbmu_{t})\rangle\!+\!2V\eps^2\!+\!2\eta^2\|{\nabla}_{\bbf}\hat{\ccalL}_{t}(\bbf_{t},\bbmu_{t})\|_{\ccalH}^2.
 \end{align}
 By re-ordering the terms of the above equation, we get
 \begin{align}\label{diff_ft+1_f_4}
 \langle \bbf_t-\bbf,{\nabla}_{\bbf}\hat{\ccalL}_{t}(\bbf_{t},\bbmu_{t})\rangle
  \!&\le\! \frac{1}{2\eta}\big(\|\bbf_t-\bbf\|_{\ccalH}^2-\|\bbf_{t+1}-\bbf\|_{\ccalH}^2\big)\!+\!\frac{\sqrt{V}\eps}{\eta}\|\bbf_t-\bbf\|_{\ccalH}+\frac{V\eps^2}{\eta}+\eta\|{\nabla}_{\bbf}\hat{\ccalL}_{t}(\bbf_{t},\bbmu_{t})\|_{\ccalH}^2.
 \end{align}
 Using the first order convexity condition for instantaneous Lagrangian $\hat{\ccalL}_{t}(\bbf_{t},\bbmu_{t})$, since it is convex with respect to $\bbf_t$ and write
 \begin{equation}\label{eq:1storderconv}
\!\! \hat{\ccalL}_{t}(\bbf_{t},\bbmu_{t})-\hat{\ccalL}_{t}(\bbf,\bbmu_{t})\le \langle \bbf_t-\bbf,{\nabla}_{\bbf}\hat{\ccalL}_{t}(\bbf_{t},\bbmu_{t})\rangle.
 \end{equation}
 Next we use \eqref{eq:1storderconv} in \eqref{diff_ft+1_f_4} and get
 \begin{align}\label{diff_ft+1_f_final}
 &\hat{\ccalL}_{t}(\bbf_{t},\bbmu_{t})-\hat{\ccalL}_{t}(\bbf,\bbmu_{t})\le\! \frac{1}{2\eta}\big(\|\bbf_t-\bbf\|_{\ccalH}^2\!-\!\|\bbf_{t+1}-\bbf\|_{\ccalH}^2\big)\!+\!\frac{\sqrt{V}\eps}{\eta}\|\bbf_t-\bbf\|_{\ccalH}+\frac{V\eps^2}{\eta}+\eta\|{\nabla}_{\bbf}\hat{\ccalL}_{t}(\bbf_{t},\bbmu_{t})\|_{\ccalH}^2.
 \end{align}
 Similarly, we consider the squared difference of dual variable update $\bbmu_{t+1}$ in \eqref{eq:dualupdate} and an arbitrary dual variable $\bbmu$, 
 \begin{align}\label{eq:mu_t+1_mu1}
 \|\bbmu_{t+1}-\bbmu\|^2&=\|[\bbmu_t+\eta\nabla_{\bbmu}\hat{\ccalL}_{t}(\bbf_t,\mathbf{\bbmu}_t)]_{+}-\bbmu\|^2\le \|\bbmu_t+\eta\nabla_{\bbmu}\hat{\ccalL}_{t}(\bbf_t,\mathbf{\bbmu}_t)-\bbmu\|^2.
  \end{align}
  The above inequality in \eqref{eq:mu_t+1_mu1} comes from the non-expansiveness of the projection operator $[.]_+$. Next we expand the square of the right-hand side of \eqref{eq:mu_t+1_mu1} and get,
  \begin{align}\label{eq:mu_t+1_mu2}
  \!\! \|\bbmu_{t+1}-\bbmu\|^2&\le  \|\bbmu_{t}-\bbmu\|^2 + 2\eta\nabla_{\bbmu}\hat{\ccalL}_{t}(\bbf_t,\bbmu_t)^T(\bbmu_t-\bbmu)+\eta^2\|\nabla_{\bbmu}\hat{\ccalL}_{t}(\bbf_t,\bbmu_t)\|^2.
  \end{align}
 We re-arrange the terms in the above expression and get,
 \begin{align}\label{eq:mu_t+1_mu3}
\!\! \nabla_{\bbmu}\hat{\ccalL}_{t}(\bbf_t,\mathbf{\bbmu}_t)^T(\bbmu_t-\bbmu) &\!\!\geq \frac{1}{2\eta}\big(\|\bbmu_{t+1}\!-\!\bbmu\|^2\!-\!\|\bbmu_{t}\!-\!\bbmu\|^2\big) -\frac{\eta}{2}\|\nabla_{\bbmu}\hat{\ccalL}_{t}(\bbf_t,\mathbf{\bbmu}_t)\|^2.
 \end{align}
 Since the instantaneous Lagrangian $\hat{\ccalL}_{t}(\bbf_{t},\bbmu_{t})$ is concave with respect to the dual variable $\bbmu_t$, i.e., 
 \begin{align}\label{eq:mu_concave}
 \hat{\ccalL}_{t}(\bbf_{t},\bbmu_{t})-\hat{\ccalL}_{t}(\bbf_{t},\bbmu)\geq \nabla_{\bbmu}\hat{\ccalL}_{t}(\bbf_t,\bbmu_t)^T(\bbmu_t-\bbmu).
 \end{align}
 Next we use the left-hand side of the inequality \eqref{eq:mu_concave} in \eqref{eq:mu_t+1_mu3} and get the expression,
 \vspace{-0.3cm}
 \begin{align}\label{eq:mu_t+1_mu4}
\hat{\ccalL}_{t}(\bbf_{t},\bbmu_{t})-\hat{\ccalL}_{t}(\bbf_{t},\bbmu)&\geq \frac{1}{2\eta}\big(\|\bbmu_{t+1}-\bbmu\|^2-\|\bbmu_{t}-\bbmu\|^2\big) -\frac{\eta}{2}\|\nabla_{\bbmu}\hat{\ccalL}_{t}(\bbf_t,\mathbf{\bbmu}_t)\|^2.
 \end{align}
We subtract \eqref{eq:mu_t+1_mu4} from \eqref{diff_ft+1_f_final} to obtain   the final expression,
 \begin{align}
 \hat{\ccalL}_{t}(\bbf_{t},\bbmu)-\hat{\ccalL}_{t}(\bbf,\bbmu_t)&\leq \frac{1}{2\eta}\big(\|\bbf_t\!-\!\bbf\|_{\ccalH}^2-\|\bbf_{t+1}-\bbf\|_{\ccalH}^2+\|\bbmu_{t}\!-\!\bbmu\|^2\!\!-\!\|\bbmu_{t+1}\!-\!\bbmu\|^2\big) +\frac{\eta}{2}\big( 2\|{\nabla}_{\bbf}\hat{\ccalL}_{t}(\!\bbf_{t},\bbmu_{t}\!)\|_{\ccalH}^2+\|\nabla_{\bbmu}\hat{\ccalL}_{t}(\!\bbf_t,\mathbf{\bbmu}_t\!)\|^2\big)\nonumber\\
&\quad+\frac{\sqrt{V}\eps}{\eta}\|\bbf_t-\bbf\|_{\ccalH}+\frac{V\eps^2}{\eta}.
 \end{align} 
  \end{proof}
\section{ Definition and proof of Lemma \ref{lemma:dist_subspace}}\label{lemma:dist_function}
In this section, we present the proof of Theorem \ref{thm:bound_memory_order}, where we upper bound the growth of the dictionary. But before going into the proof of Theorem \ref{thm:bound_memory_order}, we present Lemma \ref{lemma:dist_subspace} which defines the notion of measuring distance of a point from subspace which will be subsequently used in the proof of Theorem \ref{thm:bound_memory_order}.
Using Lemma \ref{lemma:dist_subspace}, we establish the relation between the stopping criteria of the compression procedure to a Hilbert subspace distance.
\begin{lemma}\label{lemma:dist_subspace}
Define the distance of an arbitrary feature vector $\bbx$ obtained by the feature transformation $\phi(\bbx)=\kappa(\bbx, \cdot)$  to the subspace of the Hilbert space
spanned by a dictionary $\bbD$ of size $M$, i.e., $\ccalH_{\bbD}$ as 
\begin{align}
\text{dist}(\kappa(\bbx, \cdot), \ccalH_{\bbD})=\min_{\bbf \in \ccalH_{\bbD}} \|\kappa(\bbx, \cdot)-\bbv^T\boldsymbol{\kappa}_{\bbD}(\cdot)\|_\ccalH.
\end{align}
This set distance gets simplified to the following least-squares projection when dictionary, $\bbD \in \reals^{p\times M}$ is fixed
\begin{align}\label{eq:dist1}
\text{dist}(\kappa(\bbx, \cdot), \ccalH_{\bbD})=\|\kappa(\bbx, \cdot)- [\bbK_{\bbD,\bbD}^{-1}\boldsymbol{\kappa}_{\bbD}(\bbx)]^{T}\boldsymbol{\kappa}_{\bbD}(\cdot)\|_\ccalH.
\end{align}
\end{lemma}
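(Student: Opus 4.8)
\textbf{Proof plan for Lemma~\ref{lemma:dist_subspace}.} The statement is really just the observation that orthogonal projection onto a finite-dimensional subspace of a Hilbert space is a least-squares problem whose solution is given in closed form by the (cross-)kernel matrices. So the plan is to carry out the elementary projection computation carefully in the RKHS $\ccalH$. First I would parametrize an arbitrary element of $\ccalH_{\bbD}$ as $f(\cdot) = \bbv^T\boldsymbol{\kappa}_{\bbD}(\cdot) = \sum_{n=1}^M v_n \kappa(\bbd_n,\cdot)$ with $\bbv\in\reals^M$, so that the set distance becomes $\text{dist}(\kappa(\bbx,\cdot),\ccalH_{\bbD}) = \min_{\bbv\in\reals^M}\|\kappa(\bbx,\cdot) - \bbv^T\boldsymbol{\kappa}_{\bbD}(\cdot)\|_{\ccalH}$. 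This is what the first displayed equation of the lemma asserts, once one recognizes that minimizing over $f\in\ccalH_{\bbD}$ is the same as minimizing over the coefficient vector $\bbv$.

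Next I would expand the squared Hilbert-norm objective using bilinearity of $\langle\cdot,\cdot\rangle_{\ccalH}$ and the reproducing property \eqref{eq:rkhs_properties}(i): writing $\psi(\bbv):=\|\kappa(\bbx,\cdot) - \bbv^T\boldsymbol{\kappa}_{\bbD}(\cdot)\|_{\ccalH}^2$, one gets
\begin{align}\label{eq:distlemma_expand}
\psi(\bbv) = \kappa(\bbx,\bbx) - 2\bbv^T\boldsymbol{\kappa}_{\bbD}(\bbx) + \bbv^T\bbK_{\bbD,\bbD}\bbv,
\end{align}
since $\langle\kappa(\bbx,\cdot),\kappa(\bbd_n,\cdot)\rangle_{\ccalH} = \kappa(\bbd_n,\bbx) = [\boldsymbol{\kappa}_{\bbD}(\bbx)]_n$ and $\langle\kappa(\bbd_m,\cdot),\kappa(\bbd_n,\cdot)\rangle_{\ccalH} = [\bbK_{\bbD,\bbD}]_{m,n}$. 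This is a convex quadratic in $\bbv$; setting $\nabla_{\bbv}\psi(\bbv) = -2\boldsymbol{\kappa}_{\bbD}(\bbx) + 2\bbK_{\bbD,\bbD}\bbv = \mathbf{0}$ gives the minimizer $\bbv^\star = \bbK_{\bbD,\bbD}^{-1}\boldsymbol{\kappa}_{\bbD}(\bbx)$ (using that $\bbK_{\bbD,\bbD}$ is invertible, which is implicit in the dictionary being a linearly independent set of kernel atoms — KOMP with pre-fitting maintains exactly this; if one wants to be safe one uses the pseudoinverse and the argument is unchanged). Substituting $\bbv^\star$ back, or equivalently noting that the minimizing $f^\star = (\bbv^\star)^T\boldsymbol{\kappa}_{\bbD}(\cdot) = [\bbK_{\bbD,\bbD}^{-1}\boldsymbol{\kappa}_{\bbD}(\bbx)]^T\boldsymbol{\kappa}_{\bbD}(\cdot)$ is precisely the orthogonal projection of $\kappa(\bbx,\cdot)$ onto $\ccalH_{\bbD}$, yields
\begin{align}\label{eq:distlemma_final}
\text{dist}(\kappa(\bbx,\cdot),\ccalH_{\bbD}) = \big\|\kappa(\bbx,\cdot) - [\bbK_{\bbD,\bbD}^{-1}\boldsymbol{\kappa}_{\bbD}(\bbx)]^T\boldsymbol{\kappa}_{\bbD}(\cdot)\big\|_{\ccalH},
\end{align}
which is \eqref{eq:dist1}.

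There is essentially no hard part here — the lemma is a bookkeeping statement and the only thing to be careful about is using the reproducing property in the right places so that all inner products in $\ccalH$ reduce to kernel evaluations, and noting explicitly that when $\bbD$ is held fixed the constrained minimization over the (closed, finite-dimensional, hence complete) subspace $\ccalH_{\bbD}$ is attained and equals the unconstrained least-squares solution over $\bbv$. If I wanted to be maximally clean I would invoke the Hilbert projection theorem to assert existence/uniqueness of the minimizer $f^\star\in\ccalH_{\bbD}$ and then just verify that the stated $f^\star$ satisfies the normal equations $\langle \kappa(\bbx,\cdot) - f^\star, \kappa(\bbd_n,\cdot)\rangle_{\ccalH} = 0$ for all $n$, which is the same computation as above read in reverse. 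Either route is a few lines. This lemma is then fed into the proof of Thm.~\ref{thm:bound_memory_order} exactly where \eqref{eq:dist1} is cited to pass from \eqref{eq:dist_eq8} to \eqref{eq:dist_eq9}.
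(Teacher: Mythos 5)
Your proposal is correct and follows essentially the same route as the paper's proof: reparametrize the minimization over $\ccalH_{\bbD}$ as an unconstrained least-squares problem over the coefficient vector $\bbv$, obtain $\bbv^\star=\bbK_{\bbD,\bbD}^{-1}\boldsymbol{\kappa}_{\bbD}(\bbx)$ in closed form, and substitute back. You simply spell out the quadratic expansion, the normal equations, and the invertibility caveat that the paper leaves implicit by pointing to the logic behind \eqref{eq:hatparam_update}.
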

\begin{proof}
 The distance to the subspace $\ccalH_{\bbD}$ is defined as
\begin{align}\label{eq:dist_eq1}
\text{dist}(\kappa(\bbx, \cdot), \ccalH_{\bbD})= \min_{\bbf \in \ccalH_{\bbD}}\|\kappa(\bbx, \cdot)-\bbv^T\boldsymbol{\kappa}_{\bbD}(\cdot)\|_\ccalH=\min_{\bbv \in  \reals^{M}} \|\kappa(\bbx, \cdot)-\bbv^T\boldsymbol{\kappa}_{\bbD}(\cdot)\|_\ccalH
\end{align} 
 where the second equality comes from the fact that as $\bbD$ is fixed so minimizing over $\bbf$ translates down to minimizing over $\bbv$   since it is the only free parameter now. Now we solve \eqref{eq:dist_eq1} and obtain $\bbv^*=\bbK_{\bbD,\bbD}^{-1}\boldsymbol{\kappa}_{\bbD}(\bbx)$ minimizing \eqref{eq:dist_eq1} in a manner similar to logic which yields \eqref{eq:hatparam_update}. Now using $\bbv^*$ we obtain the required result given in \eqref{eq:dist1}, thereby concluding the proof.
\end{proof}
\section{Proof of Corollary \ref{thm:iter_comp}}\label{proof_iter_comp}
Considering the optimality gap bound in \eqref{eq:converproof9_1_temp} and using the fact that $\eps\leq {2}R_{\ccalB}$, we can write \eqref{eq:converproof9_1_temp} with step size $\eta=1/\sqrt{T}$ as
\begin{align}\label{eq:iter_comp_1}
\frac{1}{T}\sum_{t=1}^T\mbE\big[S(\bbf_t)-S(\bbf^*)\big]&\leq\frac{1}{2\eta \sqrt{T}}\|\bbf_\nu^\star\|_{\ccalH}^2+ {\frac{V\eps}{\eta}}{4}R_{\ccalB}+\frac{ K}{2\sqrt{T}} + \frac{{4}VR_{\ccalB}(C X+\lambda R_{\ccalB})}{\xi}\nu.
\end{align}
Now denoting $Q\coloneqq 4VR_{\ccalB}(CX+\lambda R_{\ccalB})/\xi$ and using $\nu=\zeta T^{-1/2} + \Lambda \alpha$, we write \eqref{eq:iter_comp_1} as
\begin{align}\label{eq:iter_comp_2}
\frac{1}{T}\sum_{t=1}^T\mbE\big[S(\bbf_t)-S(\bbf^*)\big]&\leq\frac{1}{2\eta \sqrt{T}}\|\bbf_\nu^\star\|_{\ccalH}^2+ {\frac{V\eps}{\eta}}{4}R_{\ccalB}+\frac{ K}{2\sqrt{T}} + Q (\zeta T^{-1/2} + \Lambda \alpha)\nonumber\\
& = \frac{1}{\sqrt{T}}\Big(\frac{\|\bbf_\nu^\star\|_{\ccalH}^2}{2}+\frac{K}{2}+ Q\zeta\Big) + \alpha(4VR_{\ccalB}+Q\Lambda).
\end{align}
Now for the optimality gap in \eqref{eq:iter_comp_2} to be less than $\varepsilon$, it is sufficient if we consider that both the terms in the right hand side of the inequality \eqref{eq:iter_comp_2} are bounded by $\varepsilon/2$, i.e.,
\begin{align}
\alpha(4VR_{\ccalB}+Q\Lambda)&\leq \varepsilon/2\label{eq:iter_comp_3}\\
\frac{1}{\sqrt{T}}\Big(\frac{\|\bbf_\nu^\star\|_{\ccalH}^2}{2}+\frac{K}{2}+ Q\zeta\Big) &\leq \varepsilon/2\label{eq:iter_comp_4}
\end{align}
Thus, from \eqref{eq:iter_comp_3} and \eqref{eq:iter_comp_4} we can deduce that for an optimality gap to be less than $\varepsilon$ we require $\ccalO(1/\varepsilon^2)$ iterations with $\alpha$ satisfying \eqref{eq:iter_comp_3}. Now using the bound of $\alpha$ from \eqref{eq:iter_comp_3} in \eqref{eq:agent_mo} of Theorem \ref{thm:bound_memory_order}, we get model order complexity bound of $\ccalO(1/\varepsilon^{2p})$ required to achieve an optimality gap of $\varepsilon$.

\newpage

   \end{document}